%% file: two_minimal_spheres.tex
\documentclass[11pt]{amsart}
\usepackage{amssymb}
\usepackage{amsfonts}
\usepackage{amsmath}
\usepackage{graphicx}
\usepackage{xcolor}
\usepackage{amsmath}
\usepackage{mathrsfs}
\usepackage{stmaryrd}
\usepackage{epsfig,color}
\usepackage{blindtext}
\usepackage{enumerate}
\usepackage{hyperref}
\usepackage{url}
\usepackage{bbm}
\usepackage{nicefrac,mathtools}
\usepackage{bm}   
\usepackage{tabularx}
\DeclareGraphicsExtensions{.pdf,.jpeg,.png}
\usepackage{epstopdf}
\usepackage{cancel} 
\usepackage[normalem]{ulem} 
\usepackage{verbatim} 
\usepackage{scalerel}
\usepackage{enumitem}
\usepackage{tikz-cd}

\usepackage{color}
\usepackage[msc-links, lite]{amsrefs}
\usepackage{geometry}
\usepackage{stackengine,scalerel}

\newtheorem{theorem}{Theorem}[section]

\newtheorem{problem}{Problem}
\newtheorem*{theoremA}{Theorem A}

\newtheorem{proposition}[theorem]{Proposition}
\newtheorem{lemma}[theorem]{Lemma}

\newtheorem*{claim}{Claim}

\theoremstyle{definition}
\newtheorem{definition}[theorem]{Definition}
\theoremstyle{remark}
\newtheorem{remark}[theorem]{Remark}

\newtheorem*{rmk}{Remark}

\numberwithin{equation}{section}

\newcommand{\mf}{\mathbf}
\newcommand{\mb}{\mathbb}
\newcommand{\mc}{\mathcal}
\newcommand{\ms}{\mathscr}
\newcommand{\mk}{\mathfrak}
\newcommand{\mr}{\mathrm}
\newcommand{\oli}{\overline}

\newcommand{\wti}{\widetilde}
\newcommand{\res}{\scaleobj{1.75}{\llcorner}}

\newcommand{\C}{\mathcal C}

\newcommand{\rom}[1]{\expandafter\romannumeral #1}
\newcommand{\Rom}[1]{\uppercase\expandafter{\romannumeral #1}}

\DeclareMathOperator{\rel}{\mathrm{rel}}

\DeclareMathOperator{\VC}{\mc V\C}

\title[Existence of two minimal 2-spheres]{Existence of two embedded minimal spheres in $S^3$ with an arbitrary metric}

\author{Zhichao Wang}
\address{Shanghai Center for Mathematical Science, 2005 Songhu Road, Fudan University, Shanghai, 200438, China}
\email{zhichao@fudan.edu.cn}

\author{Xin Zhou}
\address{Department of Mathematics, 531 Malott Hall, Cornell University, Ithaca, NY 14853, USA}
\email{xinzhou@cornell.edu}

\begin{document}
\begin{abstract}
    We prove that $S^3$ endowed with an arbitrary Riemannian metric $g$ admits at least two embedded minimal spheres. The proof is based on an iterative scheme of relative min-max constructions.
\end{abstract}

\maketitle

\section{Introduction}

In 1982, S. T. Yau posed the following problem \cite{Yau82}: 
\begin{problem}[\cite{Yau82}*{Problem 89}]\label{prob}
Prove that there are four distinct embedded minimal spheres in any (Riemannian) manifold diffeomorphic to $S^3$ (the 3-dimensional sphere).  
\end{problem}

Let $(S^3, g)$ denote an arbitrary Riemannian manifold diffeomorphic to $S^3$. In this paper, we establish the following result (see Theorem \ref{thm:two spheres}, where the theorem is restated and proved):

\begin{theoremA}
    There exist at least two distinct embedded minimal spheres in $(S^3, g)$. 
\end{theoremA}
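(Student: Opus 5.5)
The plan is to argue by contradiction: assume $(S^3,g)$ contains exactly one embedded minimal sphere $\Sigma$. Existence of at least one is Simon--Smith: the min-max width $W$ of the sweepout of $S^3$ by level sets of a height function is achieved by an embedded minimal sphere with integer multiplicity, whose genus is controlled by the genus bound in the Simon--Smith setting. Under the uniqueness assumption, every min-max object we produce must be supported on $\Sigma$, possibly with multiplicity. The goal is then to build a second min-max family whose width is forced strictly between two integer multiples of $\mathrm{Area}(\Sigma)$, or differs from $W$ in a way that $\Sigma$ with multiplicity cannot realize.

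First step: analyze $\Sigma$. Since $S^3$ has trivial $H_2$, $\Sigma$ separates $S^3$ into two balls $B_1,B_2$. Because $\Sigma$ is the only minimal sphere, we can classify its stability. If $\Sigma$ is unstable (or degenerate), push it off into each $B_i$ along the first eigenfunction and run mean curvature flow, or the area-decreasing isotopy. This gives a mean-convex (relative to the flow) sweepout of each $B_i$ starting at $\Sigma$ and shrinking to a point; otherwise it would converge to a different minimal sphere, or to a stable minimal surface of higher genus inside a ball. If $\Sigma$ is strictly stable, each ball $B_i$ has mean-convex-type boundary barrier, and a relative min-max in $B_i$ produces another minimal sphere in the interior of $B_i$, which is already a contradiction. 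So we may assume $\Sigma$ is unstable of index one, and both balls are foliated, or swept out, by spheres of area $<\mathrm{Area}(\Sigma)$ degenerating to points.

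Second step, the iterative relative min-max: consider the two-parameter family of pairs of spheres, or equivalently the space of spheres in $S^3$ whose nontrivial topology detects a second Lusternik--Schnirelmann class, e.g. via the $\mathbb{RP}^2$-type or $\mathbb{RP}^3$ cycles of unoriented spheres. The second width satisfies $W_2\ge W_1=\mathrm{Area}(\Sigma)$. If equality holds, Lusternik--Schnirelmann gives infinitely many critical spheres, a contradiction. Otherwise $W_2$ is realized by $\Sigma$ with multiplicity $\geq 2$. To exclude this, construct explicitly a competitor family, relative to the optimal sweepouts found in step one, whose maximal area is $<2\,\mathrm{Area}(\Sigma)$. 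Concretely, sweep by pairs: inside $B_1$ and $B_2$ use the shrinking families, glued along $\Sigma$ and the index-one unstable direction, and iterate relative min-max on the region bounded by already-constructed spheres whenever a family gets stuck.

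The main obstacle is this last step: proving the strict upper bound $W_2<2\,\mathrm{Area}(\Sigma)$, and handling multiplicity-one versus degenerate convergence in the relative min-max. Degenerate cases arise when the min-max sequence touches the boundary constraint or when necks pinch. Here I would first try to use the catenoid estimate type construction near $\Sigma$, which yields area strictly below $2\,\mathrm{Area}(\Sigma)$ for families joining two parallel copies. I would then rely on the multiplicity-one and index results in the Simon--Smith setting to conclude that the produced critical sphere differs from $\Sigma$.
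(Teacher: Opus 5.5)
Your proposal has a genuine gap at the step you yourself flag: proving $W_2<2\,\mathcal H^2(\Sigma)$ with a catenoid-type competitor. This cannot work in the case that actually remains. Under the uniqueness hypothesis, $\Sigma_0$ is \emph{not} unstable of index one. By the authors' earlier multiplicity-one theorem for unstable components, uniqueness forces $\Sigma_0$ to be weakly stable, $\lambda_1(L_{\Sigma_0})=0$. Once Song's argument disposes of contracting neighborhoods (your strictly stable case), $\Sigma_0$ has expanding neighborhoods on both sides, and $S^3$ is foliated by mean-convex spheres $\Upsilon(t)$ with $\Upsilon(\tfrac12)=\Sigma_0$.

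By folding ``degenerate'' into ``unstable'' and then assuming index one, you drop exactly this case; the index-one case you keep is already excluded by prior work. In the degenerate case the catenoid estimate is unavailable. It needs a negative first eigenvalue, so that displacing two sheets lowers area at second order by more than the neck costs. With $\lambda_1=0$, the deficit $\mathcal H^2(\Sigma_0)-\mathcal H^2(\Upsilon(t))$ is $o(|t-\tfrac12|^2)$ and can be arbitrarily flat, so nothing pays for the neck. The multiplicity-one and index results you plan to invoke concern unstable components and give nothing for a stable $\Sigma_0$.

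This is not merely technical. The paper notes that there are metrics in which $\mathbf L_2$ is realized only by $\Sigma_0$ with multiplicity two. Moreover, since the space of embedded spheres is homotopy equivalent to $\mathbb{RP}^3$, the four Lusternik--Schnirelmann widths are perfectly consistent with $\mathbf L_j=j\,\mathcal H^2(\Sigma_0)$ for $j=1,\dots,4$. So no argument seeking multiplicity one at a fixed finite level can close the proof.

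The missing idea is relative min-max over families with arbitrarily many parameters. Such families exist relative to boundary data even though the space of spheres carries no higher sweepouts. The paper builds $\Phi_\Lambda:\Delta_k\to\overline{\mathscr E}$ approximating $\sum_i\llbracket\Upsilon(x_i)\rrbracket$. Consecutive leaves are joined by necks whose sizes are defined recursively by $\xi_i=e^{-\Lambda(x_{i+1}-x_i)}(1-\xi_{i-1})$, so that when two leaves come together, almost an entire leaf is cancelled. Careful bookkeeping gives three area bounds:
\begin{itemize}
\item area $<k\,\mathcal H^2(\Sigma_0)+\eta$ on all of $\Delta_k$;
\item area $<(m-\ell)\mathcal H^2(\Sigma_0)-\eta$ on the faces of $\mathfrak F_{\ell,m}$ where two consecutive coordinates coincide;
\item by pigeonhole (some gap is at most $1/(k_0-1)$), area $<(k_0-1)\mathcal H^2(\Sigma_0)+2\eta$ on all of $\Delta_{k_0}$ for $k_0$ large.
\end{itemize}
After smoothing, an induction on $m$ shows that uniqueness forces the relative width of $\partial\mathfrak F_{1,m}$ to equal $m\,\mathcal H^2(\Sigma_0)$ for every $m\le k_0$. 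The induction is a Lusternik--Schnirelmann-type argument using the ``no flipping'' proposition near $j[\Sigma_0]$, the topological separation lemma, and the min-max theorem for sequences of boundary data. At $m=k_0$ this contradicts the upper bound. The decisive gain is thus of order $\mathcal H^2(\Sigma_0)$ (a whole sheet cancelled, forced at every parameter once $k_0$ is large), not the second-order catenoid gain near $\Sigma_0$.
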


Shortly after Yau posed this problem, Simon--Smith \cite{Smith82} proved the existence of at least one embedded minimal sphere in any $(S^3, g)$. Their proof utilized a variant of the min-max theory for minimal hypersurfaces pioneered by Almgren \cites{Alm62,Alm65} and Pitts \cite{Pi}; (see also \cite{SS} and \cite{Colding-DeLellis03}). 
It is also worth noting that branched immersed minimal spheres were obtained earlier by Sacks--Uhlenbeck \cite{Sacks-Uhlenbeck81} via a min-max theory for harmonic maps; (see also \cite{Colding-Minicozzi08b}).

Although there are naturally four distinct families of embedded spheres that can be plugged into the Simon--Smith min-max theory to construct embedded minimal spheres, the issue of integer multiplicity poses a significant obstacle to establishing the existence of a second minimal sphere. For metrics with positive Ricci curvature, White \cite{Whi91} employed degree-theoretic methods to prove the existence of at least two embedded minimal spheres, and at least four when the metric is sufficiently close to the round metric. More recently, Haslhofer--Ketover \cite{HK19} proved the existence of at least two embedded minimal spheres for bumpy metrics by demonstrating that the second width has multiplicity one.

Building on these developments, the authors \cite{wangzc2023existenceFour} very recently proved a multiplicity-one theorem for the Simon--Smith min-max theory, resolving Yau's conjecture for $(S^3, g)$ when $g$ has positive Ricci curvature or when $g$ is bumpy. We remark that this work utilized the regularity results established by Sarnataro--Stryker \cite{SS23} and the authors \cite{Wang-Zhou-C11-2023}. In the present paper, we extend these advancements to prove the existence of at least two embedded minimal spheres for all Riemannian 3-spheres. Notably, this marks the first time that more than one embedded minimal sphere has been constructed for general metrics, advancing the general problem for the first time since Simon--Smith \cite{Smith82} found the initial minimal sphere over 40 years ago.

\medskip

More recently, the Simon--Smith min-max theory has seen extensive study, not only concerning the existence of minimal surfaces with controlled topology but also in the context of broader problems in geometric topology. Building on the multiplicity-one theorem established by the authors \cite{wangzc2023existenceFour}, many results (e.g., \citelist{\cite{chu-Li-tori}\cite{chu-all-genus}\cite{Li-W-X-lens}\cite{Chu-Li-Wang-genus}\cite{Chu-Li-Wang-enumerative}\cite{li-Wang-tori}\cite{WWZ-S4}}) have addressed the existence of minimal surfaces with controlled genus in Riemannian manifolds (primarily in dimension 3) with positive Ricci curvature. Furthermore, Ketover-Liokumovich \cite{Liokomovich-Ketover23} applied our multiplicity-one theorem in their proof of Smale conjecture for lens spaces. Additionally, Ambrozio--Marques--Neves \cite{AMN-rigidity} recently proved that surface Zoll metrics on the three-dimensional sphere are characterized by the equality of their spherical area widths. To successfully employ multiplicity-one theorems, many of these recent advancements rely on the assumption that the metric has positive Ricci curvature. We hope that the methods developed in this paper can be applied to investigate similar problems without requiring such Ricci curvature assumptions.

\medskip

Finally, we notice the tremendous development and success in the original Almgren--Pitts min-max theory in the past decade, for example \cites{MN14, LMN16, MN17, IMN17, MNS17, Song18, Zhou19, MN18}, leading to the resolution of the Willmore conjecture by Marques--Neves \cite{MN14} and Yau's abundance conjecture for minimal surfaces \cite{Yau82}*{Problem 88} by Marques--Neves \cite{MN17} and Song \cite{Song18}.

\subsection{Difficulties and ideas of the proof}

Recall that the Simon--Smith min-max theory always yields at least one embedded minimal sphere, denoted by $\Sigma_0$. 
When $\Sigma_0$ is a stable minimal sphere with \emph{contracting neighborhoods} (\cite{Song18}*{Page 883}) --- that is, a neighborhood foliated by surfaces with their mean curvature vectors pointing towards $\Sigma_0$, --- we can cut $S^3$ along $\Sigma_0$ and construct embedded minimal spheres on both sides by applying an argument from \cite{Song18}. In what follows, we only consider the case where $\Sigma_0$ has an \emph{expanding neighborhood} (that is, a small neighborhood foliated by constant mean curvature surfaces whose mean curvature vectors point away from $\Sigma_0$).

We first recall several classical methods for constructing additional minimal spheres and explain why they fail for a general Riemannian metric.

Recall that there exist four different classes of sweepouts (with min-max widths $\mf L_1 \leq \mf L_2 \leq \mf L_3 \leq \mf L_4$) on the space of embedded spheres in $S^3$ \cite{wangzc2023existenceFour}*{\S 8.3}. Moreover, by Lusternik--Schnirelmann theory, it suffices to consider the case where
\[ 
    \mf L_1 < \mf L_2 < \mf L_3 < \mf L_4. 
\]
However, the possibility of higher multiplicity in the Simon--Smith min-max theory becomes the major barrier in constructing the second minimal sphere. In prior literature, there are three main methods utilized to construct additional solutions:

\begin{itemize}
    \item \emph{Degree theory}. 
    White \cite{Whi91} defined a degree for the space of embedded minimal spheres in $S^3$ with positive Ricci curvature. Nevertheless, this degree may not be well-defined for a general $S^3$ due to the lack of compactness of the space of embedded minimal spheres. 

    \smallskip

    \item \emph{Multiplicity-one theorems}. In our previous work \cite{wangzc2023existenceFour}, we proved a multiplicity-one theorem for unstable components produced by the Simon--Smith min-max theory, which implies the existence of at least four minimal spheres in a Riemannian 3-sphere with positive Ricci curvature. We also note that Haslhofer--Ketover \cite{HK19} used catenoid estimates (proved by Ketover--Marques--Neves \cite{KMN16}) to establish multiplicity one for the second min-max width if $S^3$ has positive Ricci curvature. However, in another previous work \cite{Wang-Zhou22}, we constructed metrics on $S^3$ such that $\mf L_2$ can only be realized by $\Sigma_0$ with multiplicity two. In other words, it is impossible to obtain the second minimal sphere by expecting multiplicity one to hold for all general metrics.

    \smallskip

    \item \emph{Symmetry of $S^3$}. There are also constructions \cite{BP-nonplanar} of embedded minimal spheres on ellipsoids, but heavily relying on the symmetries of the ellipsoids.
\end{itemize}

\medskip
We proceed by contradiction. Suppose that $\Sigma_0$ is the unique embedded minimal sphere in $(S^3, g)$. Then, by our previous work \cite{wangzc2023existenceFour}, $\Sigma_0$ has an expanding neighborhood. Moreover, by Haslhofer-Ketover \cite{HK19}*{Theorem 3.1}, there exists a foliation $\{\Upsilon_t\}_{t \in [0,1]}$ of $S^3$ such that $\Upsilon(\frac{1}{2}) = \Sigma_0$. Let $\mc{Z}_2(S^3; \mb{Z}_2)$ denote the space of 2-dimensional mod 2 integral cycles in $(S^3, g)$, and let $\Delta_k$ denote the simplex $\{ (x_1, \cdots, x_k) \mid 0 \leq x_1 \leq x_2 \leq \cdots \leq x_k \}$. 
Recall that the map
\[ 
    \Psi_k: \Delta_k \to \mc{Z}_2(S^3; \mb{Z}_2), \quad (x_1, \cdots, x_k) \mapsto \sum_{i=1}^k \llbracket \Upsilon(x_i) \rrbracket 
\]
induces a $k$-sweepout (in the sense of Gromov \cite{Gro88}; see also \cite{MN17}) from $\mb{RP}^k$ to $\mc{Z}_2(S^3; \mb{Z}_2)$. By the Smale conjecture (proved by Hatcher \cite{Hat83}; see also Bamler-Kleiner \cite{Bam-Klei-smale}), the space of embedded spheres in $S^3$ is homotopic to $\mb{RP}^3$, which implies that we cannot construct nontrivial sweepouts from $\mb{RP}^k$ to the space of embedded spheres when $k > 4$. Instead, we will construct a family of spheres $\{ \Phi(x) \}_{x \in \Delta_k}$ to approximate $\{ \Psi_k(x) \}_{x \in \Delta_k}$ in the following sense:

\begin{itemize}
    \item $\Phi(x)$ has area strictly less than $k \mc{H}^2(\Sigma_0) + \eta$, where $\eta > 0$ is a given small constant.
    \item $\Phi(x)$ has area strictly less than $(k-1) \mc{H}^2(\Sigma_0) + \eta$ for all $x \in \partial \Delta_k$.
    \item $\Phi(x)$ has area strictly less than $(k-1) \mc{H}^2(\Sigma_0) - \eta$ if $x_i = x_{i+1}$ for some $1 \leq i \leq k-1$.
    \item $\Phi(0, x')$ and $\Phi(x', 1)$ bound ``almost" complementary Caccioppoli sets for all $x' \in \Delta_{k-1}$.
\end{itemize}

We then consider relative Simon--Smith min-max constructions for $\{ \Phi(x) \}_{x \in \Delta_k}$. Let $W_k$ denote the min-max value. When $k=1$, by the Simon--Smith theory, $W_1$ is realized by an integral varifold supported on embedded minimal spheres. By the uniqueness of $\Sigma_0$, we have $W_1 = \mc{H}^2(\Sigma_0)$. Inductively, we can prove that $W_k > W_{k-1}$ provided that $W_{k-1} = (k-1) \mc{H}^2(\Sigma_0)$ by a Lusternik–Schnirelmann type argument. By the uniqueness of $\Sigma_0$ again, $W_k\geq k\mc H^2(\Sigma_0)$. By the area upper bounds for $\Phi(x)$, we conclude that $W_k=k\mc H^2(\Sigma_0)$.
However, for $k$ large enough, we can show that $\sup_{x\in \Delta_k}\mc{H}^2(\Phi(x)) < k \mc{H}^2(\Sigma_0)$. This leads to a contradiction, thereby proving the existence of the second minimal sphere.

To explain this upper bound, we must provide more details regarding the construction of $\{ \Phi(x) \}_{x \in \Delta_k}$. For $x \in \Delta_k$ away from a tubular neighborhood of $\partial \Delta_k$, we can simply define $\Phi(x)$ by deforming $\cup_{i=1}^k \Upsilon(x_i)$ as follows: we replace two small disks on $\Upsilon(x_i)$ and $\Upsilon(x_{i+1})$ and insert a neck connecting them. This produces a Lipschitz sphere consisting of $(2k-1)$ pieces of smooth surfaces with boundary, which we will eventually smooth out. The primary difficulty lies in how to effectively extend this map continuously to all of $\Delta_k$. The key idea is to define the radius $\xi_i$ of the necks based on the distance between $x_i$ and $x_{i+1}$ (modulated by a parameter $\Lambda\gg 1$). For instance, we enforce $\xi_{i-1} + \xi_i \to 1^-$ if $x \in \Delta_k \setminus \partial \Delta_k$ converges to $\hat{x} = (\hat{x}_1, \cdots, \hat{x}_k) \in \partial \Delta_k$ with $\hat{x}_i = \hat{x}_{i+1}$. The parameter $\Lambda$ controls the rate at which $\xi_{i-1} + \xi_i \to 1^-$ as $x \to \hat{x}$. We refer the reader to Section \ref{sec: construct family of spheres} for the delicate details of this construction.

For a fixed parameter $\Lambda_0$ and a sufficiently large integer $k_0$, any $x = (x_1, \cdots, x_{k_0}) \in \Delta_{k_0}$ must contain at least one adjacent pair $(x_i, x_{i+1})$ whose difference is arbitrarily small. This ensures the existence of at least one neck with a large radius and a small height, resulting in a uniformly small area. However, the two disks removed to attach this neck have areas strictly bounded from below. Consequently, the total area of $ \Phi(x)$ is strictly less than $k_0 \mc{H}^2(\Sigma_0)$.

To carry out the Lusternik-Schnirelmann argument, we require the sum of the Caccioppoli sets bounded by $\Phi(0,x')$ and $\Phi(x',1)$ being sufficiently close to $\llbracket S^3\rrbracket$ as mod 2 currents. For a given $k_0$, this can be achieved by taking $\Lambda$ sufficiently large; however, doing so may violate the area bound $\sup\mc{H}^2(\Phi(x)) < k_0 \mc{H}^2(\Sigma_0)$. To simultaneously preserve this strict area upper bound and the requirement for $\Phi(0,x')$ and $\Phi(x',1)$, we will interpolate between $\Phi_{\Lambda}$ (for large $\Lambda > \Lambda_0$) near the boundary $\partial \Delta_{k_0}$, and $\Phi_{\Lambda_0}$ for $x$ away from $\partial\Delta_{k_0}$.

\subsection{Organization} 
This paper is organized as follows. In Section \ref{sec:pre}, we set up notations and review essential lemmas from geometric measure theory. We also state two relative Simon--Smith min-max theorems that will be applied later. In Section \ref{sec: construct family of spheres}, we construct a family of spheres and present fundamental estimates for the area of the connecting necks. In Section \ref{sec:area upper bound}, we establish upper bounds for the area of this family as well as related properties. In Section \ref{sec:smooth family}, we smooth out the family and show that the resulting smooth family retains the desired area upper bounds. In Section \ref{sec:apply min-max}, we determine the min-max values of this smooth family of spheres, a process that relies heavily on a crucial topological lemma (Lemma \ref{lem:top}). Finally, in Appendix \ref{app:approximation}, we prove that our Lipschitz family of spheres can be appropriately approximated by a smooth family.

\subsection*{Acknowledgments} We would like to thank Xingzhe Li for valuable suggestions. Z.W. is supported by Tianyuan Mathematics Frontier Key Special Program (NSFC Grant No. 12526203). X.Z. is supported by NSF grants DMS-2506717 and a grant from the Simons Foundation. 
 
\section{Preliminaries}\label{sec:pre}

\subsection{Notations}
We will assume that $(S^3, g)$ is isometrically embedded in some $\mb R^L$.
\begin{itemize}
    
    \item $\mc C(S^3)$ denotes the space of sets $\Omega\subset S^3$ with finite perimeter (Caccioppoli set), \cite{Si}*{\S 14}. We sometime denote $\llbracket \Omega \rrbracket$ by the associated 3-dimensional mod-2 integral current.

    \item $\mc Z_2(S^3; \mb Z_2)$ denote the space of 2-dimensional mod-2 integral cycles in $(S^3, g)$, \cite{Fed}*{\S 4.1}.
    
    \item $\mc V(S^3)$ denotes the space of $2$-varifolds in $S^3$, \cite{Si}*{\S 38}.    
    
    \item The \textit{$\VC$-space} defined in \cite{wangzc2023existenceFour}*{Definition 1.3}, denoted by $\VC(S^2)$, is the space of all pairs $(V, \Omega)\in \mc V(S^3)\times\mc C(S^3)$ such that there is a sequence $\{\Omega_k\}\subset \mc C(S^3)$ with $|\partial\Omega_k|\to V$ in $\mc V(S^3)$ and $\Omega_k\to \Omega$ in $\mc C(S^3)$.
    
    \item Given two pairs $(V, \Omega)$ and $(V', \Omega')$ in $\VC(S^3)$, the \textit{$\ms F$-distance between them} defined in \cite{wangzc2023existenceFour}*{Definition 1.3} is
    \begin{equation*}
        \ms F\big( (V, \Omega), (V', \Omega') \big):= \mf F(V, V') + \mc F(\Omega, \Omega'),
    \end{equation*}
    where $\mf F$ and $\mc F$ are respectively the varifold $\mf F$-metric in $\mc V(S^3)$ and the flat metric in $\mc C(S^3)$. 

    \item Given a connected closed surface $\Sigma_0$, we denote $[\Sigma_0]$ or $\llbracket \Sigma_0 \rrbracket$ respectively by the associated $2$-varifold or mod-$2$ integral current. 
    
    \item Denote by (see \cite{wangzc2023existenceFour}*{(2.1)}) 
    \[ \ms E(\Sigma_0):=\{(\Sigma, \Omega) \in \VC(M) \big| \Sigma \text{ is a separating embedding of $\Sigma_0$ which bounds } \Omega\}, \]
    endowed with the oriented smooth topology in the usual sense, that is, $(\Sigma_j, \Omega_j)$ converges to $(\Sigma_\infty, \Omega_\infty)$ if $\Sigma_j$ converges in the smooth topology to $\Sigma_\infty$ and $\Omega_j$ converges to $\Omega_\infty$ in $\C(M)$. 

    \item Denote by
    \[ \oli{\ms E}(\Sigma_0):=\left\{(\Sigma, \Omega) \in \VC(M) \left|\,
    \begin{aligned}
    &\Sigma \text{ is a point or a piecewise-smooth, separating }\\
    &\text{ embedding of $\Sigma_0$ which bounds } \Omega
    \end{aligned}\right\} \right.
    \]
    Note that if $(p, \Omega)\in \oli{\ms E}$ where $p$ is a point, then $\Omega = \emptyset$ or $M$. In the following, we may sometime abuse notation and write
    \[ \mc H^2(\Sigma, \Omega) = \mc H^2(\Sigma). \]

    \item When $\Sigma_0$ is the 2-sphere $\mb S^2$, we simply write $\ms E(\mb S^2)$ as $\ms E$ and $\overline{\ms E(\mb S^2)}$ as $\overline{\ms E}$. At times, we abuse notation by using $\ms E$ to denote the space of smooth embedded 2-spheres. 

    \item Given a map $\Phi: X\to \ms E \text{ or } \oli{\ms E}$ for some parameter space $X$, by writing 
    \[ \Phi(x) = \big(\Phi^1(x),\Phi^2(x)\big), \quad \text{for each $x\in X$}, \]
    we mean that $\Phi^1(x)$ is the associated surface and $\Phi^2(x)$ is the associated Caccioppoli set.
\end{itemize}

\subsection{A varifold neighborhood lemma}

Assume that $\Sigma_0$ is only smooth embedded minimal 2-sphere in $(S^3, g)$. For $\tau>0$, denote by $\mc U_{\tau}(k|\Sigma_0|)$ the set of integral varifolds $V$, such that $\mf F(V, k|\Sigma_0|)<\tau$. Letting $k\geq 0$ be an integer, we have the following.

\begin{proposition}\label{prop:no flipping near a varifold}
There exist $\tau_0>0$ and $\theta_0>0$ depending only on $(S^3, g)$ and $\Sigma_0$, but independent of $k$, such that if $\Phi: [0,1] \to \mc C(S^3)$ is a continuous family satisfying 
    \begin{equation}\label{eq:small neighborhood}
        |\partial \Phi(x)| \in \mc U_{\tau_0}(k|\Sigma_0|), \quad \forall x\in [0,1],
    \end{equation}
    then 
    \[
     \mc F\big(\Phi(0)+\Phi(1),\llbracket S^3\rrbracket\big) >\theta_0.
    \]
\end{proposition}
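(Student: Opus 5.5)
The plan is to show that, under \eqref{eq:small neighborhood}, each $\Phi(x)$ is $L^1$-close (equivalently flat-close) to one of the four sets $\emptyset$, $A$, $B$, $S^3$. Here $A,B$ are the two components of $S^3\setminus\Sigma_0$. The estimates must be uniform in $k$. Connectedness of $[0,1]$ then pins $\Phi(0)$ and $\Phi(1)$ near the \emph{same} model set, so their mod-$2$ sum is near $0$, which is far from $\llbracket S^3\rrbracket$. Note that a naive compactness argument (take limits of $\Omega_j$ with $|\partial\Omega_j|\to k|\Sigma_0|$ and apply the constancy theorem) gives this only with constants depending on $k$. Since the mass $k\mc H^2(\Sigma_0)$ is unbounded in $k$, I will instead use a quantitative, localized argument.

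\emph{Step 1 (mass away from $\Sigma_0$ is small, uniformly in $k$).} Fix $r>0$ small, to be chosen depending only on $(S^3,g)$ and $\Sigma_0$. Let $N_s$ denote the $s$-tubular neighborhood of $\Sigma_0$. Take a function $f$ with $0\le f\le 1$, $f=1$ outside $N_{2r}$, $f=0$ on $N_r$, and $\mathrm{Lip}(f)\le 1/r$. Then $rf$ is an admissible test function for the $\mf F$-metric, and $\|k|\Sigma_0|\|(f)=0$. Hence $\mf F(|\partial\Phi(x)|,k|\Sigma_0|)<\tau_0$ gives
\[
\|\partial\Phi(x)\|\big(S^3\setminus N_{2r}\big)\le \tau_0/r ,
\]
independently of $k$.

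\emph{Step 2 (relative isoperimetric dichotomy).} Let $D_A=A\setminus N_{2r}$ and $D_B=B\setminus N_{2r}$. For $r$ small these are connected domains with smooth boundary, so they satisfy a relative isoperimetric inequality
\[
\min\{|\Omega\cap D|,\,|D\setminus\Omega|\}^{2/3}\le C_r\,\mathrm{Per}(\Omega;D).
\]
First choose $r$ so that $\mathrm{vol}(N_{2r})<\theta$, where $\theta$ is a small fraction of $\min\{\mathrm{vol}A,\mathrm{vol}B\}$. Then choose $\tau_0$ so that $C_r^{3/2}(\tau_0/r)^{3/2}<\theta$. With these choices, for every $x$ and each of $D_A,D_B$, the set $\Phi(x)$ either nearly misses or nearly fills that domain, up to volume $\theta$. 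So $\Phi(x)$ lies within volume $2\theta+\mathrm{vol}(N_{2r})<3\theta$ of exactly one of $\emptyset,A,B,S^3$. These four sets are pairwise at volume distance at least $\min\{\mathrm{vol}A,\mathrm{vol}B\}\gg 6\theta$, so the choice is unique.

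\emph{Step 3 (continuity and conclusion).} For Caccioppoli sets, the flat metric $\mc F$ coincides with the $L^1$-distance of indicators. Therefore $x\mapsto\Phi(x)$ is continuous in volume. The label assigned in Step 2 is locally constant, because the $3\theta$-neighborhoods of the four model sets are disjoint and separated. Since $[0,1]$ is connected, the label is constant, equal to some model set $\Omega_*$. Then
\[
\Phi(0)+\Phi(1)=\Phi(0)\,\triangle\,\Phi(1)\quad\text{(mod } 2\text{)}
\]
has volume less than $6\theta$. Hence $\mc F(\Phi(0)+\Phi(1),\llbracket S^3\rrbracket)\ge \mathrm{vol}(S^3)-6\theta=:\theta_0>0$.

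The main obstacle I expect is Step 2, namely making the isoperimetric dichotomy uniform in $k$. This is exactly why I localize to $S^3\setminus N_{2r}$, where the boundary mass is controlled by $\tau_0/r$ alone, rather than working with the full mass, which grows with $k$. Two technical points need checking. First, $D_A$ and $D_B$ must be connected with a uniform isoperimetric constant; this holds for small $r$ because $\Sigma_0$ is a smooth embedded sphere. Second, the cutoff test function must be admissible in the normalization of the $\mf F$-metric used in \cite{wangzc2023existenceFour}; if the admissible class is different, the scaling by $r$ is adjusted accordingly.
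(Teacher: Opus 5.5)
Your argument is correct. It runs on the same mechanism as the paper's proof: a test function vanishing near $\Sigma_0$ bounds the perimeter of $\Phi(x)$ away from $\Sigma_0$ independently of $k$; an isoperimetric inequality turns this into a ``nearly empty or nearly full'' dichotomy; and continuity forbids switching. The difference is where you localize.

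The paper uses a single small geodesic ball $B_{r_0}(x_0)$ disjoint from $\Sigma_0$, with a test function equal to $c_0$ on $B_{r_0}(x_0)$. The local isoperimetric inequality in that ball then shows the volume fraction of $\Phi(x)$ in $B_{r_0}(x_0)$ is always $\le\frac13$ or $\ge\frac23$. Assuming $\Phi(0)+\Phi(1)$ is flat-close to $\llbracket S^3\rrbracket$ forces one endpoint to be small and the other large in the ball. The intermediate value theorem then produces a $t_0$ with fraction exactly $\frac12$, a contradiction.

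You instead work on all of $S^3\setminus N_{2r}$. This needs the tubular-neighborhood structure, connectedness of $D_A$ and $D_B$, and a relative isoperimetric inequality on these non-ball domains with an $r$-dependent constant. In return you get a stronger conclusion: every $\Phi(x)$ is uniformly $L^1$-close to one of $\emptyset, A, B, S^3$. Accordingly, $\theta_0$ can be taken close to $\mathrm{vol}(S^3)$, rather than the paper's $\theta_0=\frac13\mathcal H^3(B_{r_0}(x_0))$.

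For the statement as posed, the four-way classification is more than you need. Running your Step 2 on $D_A$ alone already gives $\mathcal F(\Phi(0)+\Phi(1),\llbracket S^3\rrbracket)\ge \mathrm{vol}(D_A)-2\theta$. At that point your proof is essentially the paper's, with $D_A$ in place of a ball.

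The paper's version is lighter and more robust. It uses nothing about $\Sigma_0$ (separation, tubular neighborhoods, components) beyond the existence of one ball missing its support. So it applies verbatim to any target varifold whose support avoids a fixed ball, and its only isoperimetric input is in a nearly Euclidean ball.
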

\begin{proof}
Let $B_{2r_0}(x_0)$ be a geodesic ball disjoint from $\Sigma_0$. By the local isoperimetric theorem \cite{Ritore2023}*{Lemma 1.46},  
there exists a constant $A_0 > 0$, such that
\[
     \mf M(\partial T) \geq A_0 \quad \text{for all $T\in \mc C(B_{2r_0}(x_0))$ with $\frac{1}{3} \mc H^3(B_{2r_0}(x_0)) < \mf M(T) < \frac{2}{3} \mc H^3(B_{2r_0}(x_0))$}.
\]
We can choose a non-negative Lipschitz function $f\in \text{Lip}(S^3, g)$ such that 
    \[ f\equiv c_0 \in (0, 1] \text{ on } B_{r_0}(x_0), \quad f \equiv 0 \text{ on } S^3\setminus B_{2r_0}(x_0), \quad \text{and} \quad \text{Lip}(f)\leq 1 \text{ on } S^3. \]

Now let $\theta_0=\frac{1}{3} \mc H^3(B_{r_0}(x_0))$ and $\tau_0=c_0A_0$. Suppose, for the sake of contradiction, that $\Phi:[0,1]\to \mc C(S^3)$ satisfies 
   \[
     \mc F(\Phi(0)+\Phi(1),\llbracket S^3\rrbracket) < \theta_0, \quad \text{but } |\partial\Phi(0)|,|\partial\Phi(1)|\in \mc U_{\tau_0} (k|\Sigma_0|).
    \]
Then,
 \begin{align*}
      c_0A_0 = \tau_0 \geq  \mf F(\Phi(0), k|\Sigma_0|) & \geq 
      \left| \int f\,\mr d|\partial \Phi(0)| - \int f\, \mr d (k|\Sigma_0|) \right | \\
       & = \int f \,\mr d|\partial \Phi(0)|\geq c_0\mf M(\partial \Phi(0) \res B_{r_0}(x_0)).
    \end{align*}
Here the second $\geq$ follows from the definition of $\mf F$-norm \cite{Pi}*{\S 2.1(19)}.     
Hence we obtain
\[
\mf M(\partial \Phi(0) \res B_{r_0}(x_0))\leq A_0,
\]
which implies that 
\begin{equation}\label{eq:Phi0 small or large}
    \mf M(\Phi(0)\res B_{r_0}(x_0)) \leq \frac{1}{3}\mc H^3(B_{r_0}(x_0)) \quad \text{or} \quad \mf M(\Phi(0)\res B_{r_0}(x_0)) \geq \frac{2}{3}\mc H^3(B_{r_0}(x_0)).
\end{equation}
Similarly,
\begin{equation}\label{eq:Phi1 small or large}
    \mf M(\Phi(1)\res B_{r_0}(x_0)) \leq \frac{1}{3}\mc H^3(B_{r_0}(x_0)) \quad \text{or} \quad \mf M(\Phi(1)\res B_{r_0}(x_0)) \geq \frac{2}{3}\mc H^3(B_{r_0}(x_0)).
\end{equation}
Without loss of generality, we may assume that
\begin{equation}\label{eq:Phi0 < Phi1}
    \mf M(\Phi(0)\res B_{r_0}(x_0))\leq \mf M(\Phi(1)\res B_{r_0}(x_0)).
\end{equation}
Since 
\[
\mc F(\Phi(0)+\Phi(1),\llbracket S^3\rrbracket) < \frac{1}{3}\mc H^3(B_{r_0}(x_0)),
\]
it follows that
\[
      \mc F\big((\Phi(0)+\Phi(1))\res B_{r_0}(x_0),\llbracket B_{r_0}(x_0)\rrbracket\big) < \frac{1}{3}\mc H^3(B_{r_0}(x_0)),
\]
which leads to 
\[
   \frac{2}{3}\mc H^3(B_{r_0}(x_0))\leq   \mf M(\Phi(0)\res B_{r_0}(x_0)) +  \mf M(\Phi(1)\res B_{r_0}(x_0)) \leq \frac{4}{3}\mc H^3(B_{r_0}(x_0)).
\]
Combining this with \eqref{eq:Phi0 small or large}, \eqref{eq:Phi1 small or large}, and \eqref{eq:Phi0 < Phi1}, we conclude that 
\[
     \mf M(\Phi(0)\res B_{r_0}(x_0)) \leq \frac{1}{3}\mc H^3(B_{r_0}(x_0)) \quad \text{and} \quad \mf M(\Phi(1)\res B_{r_0}(x_0)) \geq \frac{2}{3}\mc H^3(B_{r_0}(x_0)).
\]  
By continuity of $\Phi$, there exists $t_0\in(0,1)$ such that 
\[
    \mf M(\Phi(t_0)\res B_{r_0}(x_0)) = \frac{1}{2}\mc H^3(B_{r_0}(x_0)). 
\]
Then we have 
\begin{align*}
       \mf F(\Phi(t_0), k|\Sigma_0|) & \geq 
      \left| \int f\,\mr d|\partial \Phi(t_0)| - \int f\, \mr d (k|\Sigma_0|) \right | \\
       & = \int f \,\mr d|\partial \Phi(t_0)|\geq c_0\mf M(\partial \Phi(t_0) \res B_{r_0}(x_0))>\tau_0.
    \end{align*}
This contradicts the assumption \eqref{eq:small neighborhood}, completing the proof of Proposition \ref{prop:no flipping near a varifold}.
\end{proof}

\subsection{Simon-Smith min-max theory}\label{SS:SS min-max}

We present a relative version of the Simon-Smith min-max theory \cite{Smith82}, adopting the notations in \cite{wangzc2023existenceFour}.
Fix a positive integer $m\in \mb N$ and denote $I(m)=[0,1]^m$. Given any positive integer $n\in \mb N$, let $I(m,n)$ be the cubic complex given by 
$I(m, n)=I(1, n)\otimes \cdots I(1, n)$ ($n$-times), where $I(1, n)$ denotes the complex on $I=[0,1]$ whose $1$-cells and $0$-cells are, respectively, $[1, 3^{-n}], [3^{-n}, 2\cdot 3^{-n}], \cdots, [1-3^{-n}, 1]$ and $[0], [3^{-n}], \cdots, [1-3^{-n}], [1]$. 
See \cite{Zhou19}*{Appendix A} for a summary of these notions. 

Let $Z\subset I(m,n)$ be a cubical subcomplex and $\Phi: Z\to \ms E$ be a continuous map. Denote by $[\Phi; \rel Z]$ the collection of continuous maps $\Psi: X \to \ms E$ such that $\partial X=Z$ (where $X\subset I(m,n')$ is a cubical complex for some positive integer $n'\geq n$) and $\Psi|_{\partial X = Z}=\Phi$. 
Denote by 
\[
    \mf L([\Phi; \rel Z]):= \inf_{\Psi\in [\Phi;\rel Z]} \sup_{x\in \mr{dom} \Psi} \mc H^2(\Psi(x)).
\]

\begin{theorem}[Relative Simon-Smith Min-Max]\label{thm:relative min-max theorem}
    With all notions as above, suppose
    \begin{equation}\label{eq:width nontrivial1}
         \mf L([\Phi; \rel Z]) > \sup_{x\in Z}\mc H^2\big(\Phi(x)\big).
    \end{equation}
    Then there exists a stationary integral varifold $V$ supported on embedded minimal spheres such that
    \[ \mf L([\Phi; \rel Z]) = \|V\|(M).\] 
\end{theorem}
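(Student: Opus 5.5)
The plan is to run the standard Simon--Smith min-max machinery, as in \cite{Smith82} and \cite{Colding-DeLellis03}, in the form adapted to the $\VC$-space in \cite{wangzc2023existenceFour}, keeping the boundary values fixed. The steps are: pull-tight, almost minimizing property, and regularity. The strict inequality \eqref{eq:width nontrivial1} is used only to keep all deformations away from $Z$.

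\textbf{Step 1: choose a minimizing sequence and pull it tight.} Take $\Psi_j\in[\Phi;\rel Z]$ with
\[
\sup_{x\in \mr{dom}\Psi_j}\mc H^2(\Psi_j(x))\to \mf L:=\mf L([\Phi;\rel Z]).
\]
Set $\delta:=\mf L-\sup_Z\mc H^2(\Phi)>0$. The pull-tight map is an ambient isotopy flowing toward the set of stationary varifolds, and it depends continuously on the varifold. Multiply its time parameter by a cutoff that vanishes on the set $\{\mc H^2(\Psi_j(x))\le \mf L-\delta/2\}$, which contains $Z$. The tightened family $\tilde\Psi_j$ then still agrees with $\Phi$ on $Z$. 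Because the deformations are by isotopies, each $\tilde\Psi_j(x)$ is still an embedded sphere, so $\tilde\Psi_j\in[\Phi;\rel Z]$. Moreover, every min-max sequence with mass tending to $\mf L$ converges subsequentially to a stationary varifold.

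\textbf{Step 2: find an almost minimizing min-max sequence.} We want $x_j$ such that $\tilde\Psi_j(x_j)$ is $(\epsilon,\delta)$-almost minimizing under isotopies in small annuli. Argue by contradiction, following \cite{Colding-DeLellis03}*{\S 5}. If no such sequence exists, then near each high-area point we can find isotopies, supported in small annuli, that lower the area by a definite amount. Patch these together over a fine refinement $I(m,n')$ using the usual combinatorial covering argument. Again, only points with area at least $\mf L-\delta/2$ are modified, so nothing happens on $Z=\partial X$. This produces a competitor in $[\Phi;\rel Z]$ with maximal area below $\mf L$, a contradiction.

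In this step the mass inequality \eqref{eq:width nontrivial1} is what guarantees that the high-area parameters form a compact set disjoint from $Z$.

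\textbf{Step 3: regularity and genus control.} Apply the regularity theory for almost minimizing sequences of isotopies: replacements \cite{Colding-DeLellis03}, together with the genus bounds of \cite{Smith82} and of \cite{wangzc2023existenceFour}, building on \cite{SS23} and \cite{Wang-Zhou-C11-2023}. The limit $V$ is then a stationary integral varifold supported on smooth embedded minimal surfaces. The genus bound passes to the limit, so every component has genus $0$, hence is a sphere (or possibly a projective plane, which cannot embed in $S^3$). Finally, $\|V\|(M)=\mf L$ by the choice of the sequence.

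\textbf{Main obstacle.} The hardest step is Step 2. The combinatorial deformation argument has to be carried out on a relative cubical complex while keeping each slice inside the space of embedded spheres. I would handle this exactly as in the absolute case, with the cutoff, and check that the refined complex still has boundary $Z$.
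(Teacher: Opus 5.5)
Your argument is correct, but it is not the route the paper takes.

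\textbf{The paper's route.} The paper never reruns pull-tight and the almost-minimizing argument on $[\Phi;\rel Z]$. It takes a minimizing sequence $\Psi_i\in[\Phi;\rel Z]$ and lets $\Pi_i$ be the homotopy class of $\Psi_i$ relative to $Z$. It then applies the already-established relative min-max theorem for a fixed homotopy class (\cite{wangzc2023existenceFour}*{Theorem 7.3}) to each $\Pi_i$. The hypothesis of that theorem holds because $\mf L(\Pi_i)\ge \mf L([\Phi;\rel Z])>\sup_Z\mc H^2(\Phi)$. This gives stationary integral varifolds $V_i$ supported on embedded minimal spheres with $\|V_i\|(M)=\mf L(\Pi_i)$. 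The sandwich $\mf L([\Phi;\rel Z])\le\mf L(\Pi_i)\le\sup\mc H^2(\Psi_i)$ forces $\mf L(\Pi_i)\to\mf L([\Phi;\rel Z])$. Finally, compactness of embedded minimal spheres with bounded area (\cite{CS}, \cite{Whi87}) produces a limit $V$ of the required type.

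\textbf{Comparison.} The paper trades your Steps 1--3 for a black box plus one compactness theorem, which carries the curvature estimates and genus control in the limit. Your route avoids that compactness step, at the cost of re-verifying the pull-tight and combinatorial deformation in the relative setting.

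Your route is legitimate for two reasons.
\begin{enumerate}
\item $[\Phi;\rel Z]$ is closed under every deformation that fixes $Z$. So the tightened family and the deformed competitor lie in it automatically, and you never need to track homotopy classes.
\item All domains $X_j$ sit in $I(m,\cdot)$ for one fixed $m$. This keeps the number of annuli in the combinatorial lemma uniform even though the domain changes along your minimizing sequence.
\end{enumerate}
Your Step 2 uses the second point only implicitly; you should state it, since the paper makes it explicit in the analogous proof. In fact your proposal is essentially the paper's proof of Theorem \ref{thm:min-max thm for sequences} specialized to constant boundary data $\Phi_i\equiv\Phi$. The present theorem is a special case of that result.
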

\begin{proof}
Let $\{\Psi_i=(\Psi_i^1,\Psi_i^2):X_i\to \ms E\}$ be a sequence of maps such that $\Psi_i\in [\Phi;\rel Z]$ and 
\[
    \sup_{x\in X_i} \mc H^2(\Psi_i^1(x)) \searrow \mf L([\Phi;\rel Z]).
\]
Let $\Pi_i$ be the collection of maps $ \Psi'=(\Psi'^1,\Psi'^2):X_i\to \ms E$ that is homotopic to $\Psi_i$ with $\Psi'|_{Z} = \Psi_i|_{Z} = \Phi$.  By the relative Simon-Smith min-max theorem (cf. \cite{wangzc2023existenceFour}*{Theorem 7.3}), there exists a stationary integral varifold $V_i$ supported on embedded minimal spheres with 
\[
  \|V_i\|(M) = \mf L(\Pi_i):=\inf_{\Psi'\in \Pi_i} \sup_{x\in X_i} \mc H^2(\Psi'^1(x)).
\]
Note that we have $\mf L(\Pi_i)\searrow \mf L([\Phi;\rel Z])$ since $\Pi_i\subset [\Phi;\rel Z]$. Then $V_i$ converges to a stationary integral varifold $V$ supported on embedded minimal spheres by compactness of embedded minimal 2-spheres with bounded area; see \citelist{\cite{CS}\cite{Whi87}}. This proves Theorem \ref{thm:relative min-max theorem}.
\end{proof}

\smallskip

For our later purpose, we need to consider min-max problems associated with a sequence of homotopy classes whose min-max values converge. To state the setups, fix a cubic subcomplex $Z \subset I(m, n)$ and a sequence of continuous maps $\{\Phi_i: Z\to \ms E\}_{i\in \mb N}$. Assume that
\begin{equation}\label{eq:width nontrivial2}
    \mf L:= \liminf_{i\to \infty}\mf L([\Phi_i; \rel Z]) > \sup_{i\in \mb N}\left\{\sup_{x\in Z}\mc H^2(\Phi_i(x))\right\}.
\end{equation}
Let $\{X_i\}_{i\in \mb N}$ be a sequence of cubic complexes with $X_i\subset I(m, n_i)$ ($n_i\geq n$) and $\partial X_i=Z$, and 
$\{\Psi_i:X_i\to \ms E\}_{i\in \mb N}$ be a sequence of continuous maps with $\Psi_i|_Z = \Phi_i$, such that (up to a subsequence which we do not re-label)
\[ \mf L(\Psi_i): = \sup_{x\in X_i}\mc H^2\big(\Psi_i(x)\big) \searrow \mf L , \text{ when } i\to\infty. \]
Such a sequence $\{\Psi_i:X_i\to \ms E\}$ is called a \textit{minimizing sequence} associated with the sequence of boundary data $\{\Phi_i: Z\to \ms E\}$. 
A subsequence $\{\Psi_{i_j}(x_j): x_j\in X_{i_j}\}_{j\in \mb N}$ is called a \textit{min-max (sub)sequence} if 
\[ \mc H^2\big( \Psi_{i_j}(x_j) \big) \to \mf L , \text{ when } j \to\infty. \]
The \textit{critical set} of a minimizing sequence $\{\Psi_i=(\Psi_i^1,\Psi_i^2)\}$ is defined by
\[ \mf C(\{\Psi_i\})=\left\{V\in\mc V(S^3)\left|\,
    \begin{aligned}   
        & \exists \text{ a min-max subsequence }\{\Psi^1_{i_j}(x_j)\} \text{ such}\\
        & \text{that } \mf F(\Psi^1_{i_j}(x_j), V) \to  0 \text{ as } j\to\infty
    \end{aligned}\right\}\right..
\]

\begin{theorem}\label{thm:min-max thm for sequences}
With all notions as above, suppose that $\{\Psi_i: X_i\to \ms E\}$ is a minimizing sequence associated with boundary data $\{\Phi_i: Z\to \ms E\}$ satisfying \eqref{eq:width nontrivial2}. 
Then there exists a min-max sequence $\{\Psi^1_{i_j}(x_j): x_j\in X_{i_j}\}_{j\in \mb N}$  
which converges to a stationary integral varifold $V$ supported on embedded minimal spheres such that 
\[
    \mf L= \|V\|(M). 
\]
 \end{theorem}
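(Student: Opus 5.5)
The plan is to reduce Theorem \ref{thm:min-max thm for sequences} to the single-class relative min-max theorem, \cite{wangzc2023existenceFour}*{Theorem 7.3}, by a diagonal argument. The one thing to arrange is that the limiting varifold is realized by an actual min-max sequence drawn from the given $\Psi_i$. The reasoning is a quantitative version of the proof of Theorem \ref{thm:relative min-max theorem}.

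For each $i$, let $\Pi_i$ be the relative homotopy class of $\Psi_i$: maps $X_i\to\ms E$ homotopic to $\Psi_i$ through maps fixing $Z$ with value $\Phi_i$. Then
\[
\mf L([\Phi_i;\rel Z])\le \mf L(\Pi_i)\le \mf L(\Psi_i).
\]
Since $\mf L(\Psi_i)\searrow\mf L$ and $\liminf_i \mf L([\Phi_i;\rel Z])=\mf L$, after passing to a subsequence we get $\mf L(\Pi_i)\to\mf L$. Condition \eqref{eq:width nontrivial2} then gives a uniform gap $\delta>0$ such that
\[
\mf L(\Pi_i)\ge \sup_{x\in Z}\mc H^2(\Phi_i(x))+\delta \quad\text{for all large } i.
\]
So each $\Pi_i$ satisfies the nontriviality hypothesis of the relative Simon--Smith theorem.

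The key step is to apply the proof of \cite{wangzc2023existenceFour}*{Theorem 7.3} rather than just its statement. That proof produces the min-max varifold from a critical sequence. Concretely, for any sweepout $\Psi'\in\Pi_i$ with $\mf L(\Psi')\le\mf L(\Pi_i)+\epsilon$, pull-tight followed by the almost-minimizing and regularity arguments yields a parameter $x\in X_i$ and a stationary integral varifold $V_i$ supported on embedded minimal spheres. These satisfy $\mf F(\Psi'^1(x),V_i)<\epsilon$ and $\big|\mc H^2(\Psi'^1(x))-\mf L(\Pi_i)\big|<\epsilon$.

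We want the parameter to come from $\Psi_i$ itself. Since $\mf L(\Psi_i)-\mf L(\Pi_i)\to0$, $\Psi_i$ is itself an almost-minimizing sweepout in $\Pi_i$ once $i$ is large. For such an almost-optimal sweepout we use a quantitative statement: every element of the critical set of $\{\Psi_i\}$ is close to a stationary varifold, and some element has the almost-minimizing property in annuli. Choosing $\epsilon_i\to0$ diagonally then gives $x_i\in X_i$ and stationary integral $V_i$ supported on embedded minimal spheres. These satisfy $\mf F(\Psi^1_i(x_i),V_i)\to0$ and $\mc H^2(\Psi_i^1(x_i))\to\mf L$.

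The main obstacle is exactly this point: pull-tight modifies the sweepout. We therefore need the pulled-tight family to be close to $\Psi_i$ in $\mf F$ at parameters of nearly maximal area, uniformly in $i$. I would first try to use the continuity of the pull-tight deformation on varifolds of bounded mass, together with the fact that $\mf L(\Psi_i)-\mf L(\Pi_i)\to 0$. Together these imply that the deformation changes almost-maximal slices only slightly in $\mf F$. The Almgren--Pitts combinatorial argument for almost minimizing, run on $\Psi_i$ with $\epsilon_i$, then locates the slice $x_i$.

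Finally, the masses are uniformly bounded, since $\|V_i\|(M)\to\mf L$. By the compactness of embedded minimal spheres with bounded area \cite{CS}, \cite{Whi87}, a subsequence $V_{i_j}$ converges to a stationary integral varifold $V$ supported on embedded minimal spheres with $\|V\|(M)=\mf L$. The triangle inequality then gives $\mf F(\Psi^1_{i_j}(x_j),V)\to0$, which is the required min-max sequence.
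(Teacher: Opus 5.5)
Your setup agrees with the paper: the sandwich $\mf L([\Phi_i;\rel Z])\le \mf L(\Pi_i)\le \mf L(\Psi_i)$ and the uniform gap $\delta$ from \eqref{eq:width nontrivial2}. The gap is in the step that returns the min-max limit to the original sequence $\{\Psi_i\}$. That step is stated the wrong way round, and as written it fails.

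First, it is not true that every element of $\mf C(\{\Psi_i\})$ is close to a stationary varifold. A minimizing sequence can contain near-maximal slices converging to a non-stationary varifold; that is the whole reason for pulling tight. So running the combinatorial argument on $\Psi_i$ itself does not produce a stationary limit. Second, the pull-tight does \emph{not} change the almost-maximal slices of $\Psi_i$ only slightly. The non-stationary near-maximal slices are exactly the ones it moves, and lowers in area, by a definite amount. Continuity of the deformation together with $\mf L(\Psi_i)-\mf L(\Pi_i)\to 0$ cannot exclude this.

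What holds, and what the paper uses, is the inclusion $\mf C(\{\Psi_i^*\})\subset \mf C(\{\Psi_i\})$ for the pulled-tight sequence $\Psi_i^*\in\Pi_i$. In other words, closeness holds at the near-maximal parameters of $\Psi_i^*$, not of $\Psi_i$. The argument runs as follows:
\begin{enumerate}
\item Suppose $\mc H^2(\Psi_i^*(x_i))\to\mf L$. Then $\mc H^2(\Psi_i(x_i))$ lies between this number and $\mf L(\Psi_i)\to\mf L$, so the area drop at $x_i$ tends to $0$.
\item The tightening map lowers mass by at least $L(\mf F(V,\mc S))$. Here $\mc S$ is the compact set of stationary varifolds of bounded mass, and $L$ is continuous and vanishes only at $0$. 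The map is also continuous and equals the identity on $\mc S$.
\item Hence $\Psi_i(x_i)$ was already nearly stationary, and $\mf F(\Psi_i^*(x_i),\Psi_i(x_i))\to 0$.
\end{enumerate}

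The combinatorial argument must then be run on $\{\Psi_i^*\}$, treated as a single minimizing sequence across the varying classes $\Pi_i$. This is legitimate because the tightening depends only on a mass bound, the gap $\delta$ keeps $Z$ away from the critical level, and all $X_i\subset I(m,n_i)$ share the same $m$. It yields a stationary limit $V_0$ that is almost minimizing in annuli, hence regular. The inclusion then makes $V_0$ the limit of a min-max subsequence of $\Psi_i$.

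Done this way, the per-$i$ varifolds $V_i$, the diagonal choice of $\epsilon_i$ and the final compactness step are unnecessary. The per-class quantitative statement you start from has thresholds depending on $i$, and that is precisely what produced your uniformity problem.
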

\begin{proof} 
Let $\Pi_i$ be the homotopy class generated by $\Psi_i:X_i\to \ms E$ relative to the boundary value $\Phi_i: Z\to \ms E$. 
By definition, $\Psi_i\in \Pi_i\subset [\Phi_i; \rel Z]$.
Define
\[ 
    \mf L( \Pi_i): = \inf_{ \Psi'\in \Pi_i} \sup_{x\in X_i} \mc H^2(\Psi'(x)).
\]
Then we have
\[
    \mf L([\Phi_i;\rel Z]) \leq \mf L(\Pi_i) \leq \mf L(\Psi_i),
\]
which immediately implies that $\mf L(\Pi_i)\to \mf L$ as  $i\to \infty$.
Noting \eqref{eq:width nontrivial2}, by the same pull-tight process as in \cite{wangzc2023existenceFour}*{Theorem 2.7}, there exists another minimizing sequence $\{\Psi_i^*\in \Pi_i\}_{i\in \mb N}$, such that $\mf C(\{\Psi_i^*\})\subset \mf C(\{\Psi_i\})$ and every element $V\in \mf C(\{\Psi_i^*\})$ is stationary. Then by a second tightening process \cite{wangzc2023existenceFour}*{Lemma 3.10} using a combinatorial argument originally due to Pitts \cite{Pi} (noting that $X_i$ embeds to $I(m, n_i)$ for a common $m$), we can find a min-max sequence $\{\Psi^*_{i_j}(x_j)\}$ converging to a stationary varifold $V_0\in \mc V (S^3)$, which is \textit{almost minimizing in small annuli}; (we refer the definition to the statement in \cite{wangzc2023existenceFour}*{Theorem 3.8}). By the regularity of almost minimizing varifold \cites{Smith82, Colding-DeLellis03} (see also \cite{wangzc2023existenceFour}), we know that $V_0$ is integral and supported on embedded minimal spheres. As $\mf C(\{\Psi_i^*\})\subset \mf C(\{\Psi_i\})$, we know that $V_0$ is also the limit of some min-max sequence $\{\Psi^1_{i_j}(x_j)\}$. This completes the proof of Theorem \ref{thm:min-max thm for sequences}.
\end{proof}

\section{Construction of multi-parameter family of spheres}\label{sec: construct family of spheres}

By the Simon-Smith theory \cite{Smith82}, $(S^3, g)$ contains at least one embedded minimal 2-sphere $\Sigma_0$. Furthermore, if $\Sigma_0$ is the only embedded minimal 2-sphere in $(S^3, g)$, then by \cite{wangzc2023existenceFour}*{Theorem 8.9} we know that $\Sigma_0$ is weakly stable; that is, the first eigenvalue $\lambda_1(L_{\Sigma_0})$ of the associated Jacobi operator $L_{\Sigma_0}$ vanishes, $\lambda_1(L_{\Sigma_0})=0$. We may also assume that $\Sigma_0$ has an expanding neighborhood (\cite{Song18}*{Page 883}) on both sides. Hence, by \cite{HK19}*{Theorem 3.1} (see also \cite{Deaibes2-foliations}), $\Sigma_0$ can be extended to a mean convex foliation on each side of $\Sigma_0$.

\medskip

Let $\Upsilon: [0,1]\to \ms E$ be the mean convex foliation of $S^3$ by 2-spheres with $\Upsilon(\frac{1}{2}) = \Sigma_0$. Suppose that $\Upsilon(0)=p$ and $\Upsilon(1)=q$, where $p, q$ are two points in $S^3$. Let $\gamma_\pm:[0,1]\to S^3$ be two distinct curves joining $p$ and $q$ such that 
\begin{itemize}
    \item $\gamma_\pm(t)\in \Upsilon(t)$;
    \item $\gamma_+$ intersects $\gamma_-$ only at $p$ and $q$;
    \item $\gamma_\pm$ are transverse to $\Upsilon(t)$ for all $t\in(0,1)$.
\end{itemize}
Denote by 
\[ \text{$\Omega(t)$ or $\Omega_t$ the connected component of $S^3\setminus \Upsilon(t)$ which contains $\Upsilon(0)$.} \]
Then $\Omega(0) = \emptyset$ and $\Omega(1) = S^3$.

\medskip

Observe that there exists a deformation retract  $\mc R:S^3\setminus \gamma_+ \times [0,1] \to S^3\setminus \gamma_+$ satisfying the following properties:
\begin{enumerate}
    \item $\mc R(\Upsilon(t)\setminus \gamma_+(t), s ) \subset \Upsilon(t)\setminus \gamma_+(t)$ for all $t,s \in[0,1]$;
    \item $\mc R(\Upsilon(t)\setminus \gamma_+(t), s ) \subset \mc R(\Upsilon(t)\setminus \gamma_+(t), s' )$ for all $t\in[0,1]$ and $0\leq s'\leq s\leq 1$;
    \item $\mc R(\cdot, 0)=\mr{id}$ on $S^3\setminus \gamma_+$;
    \item $\mc R(\Upsilon(t)\setminus \gamma_+(t), 1) =\gamma_-(t)$;
    \item for each $s, t\in(0,1)$, $\mc R(\Upsilon(t)\setminus \gamma_+(t), s)$ is a disk-type surface whose boundary curve has length uniformly bounded from above.
\end{enumerate}
Denote by 
\[ \text{$\mc D(s)= \mc R(S^3\setminus \gamma_+,s)$ \quad for all $s\in [0,1]$.} \]
Then $\mc D(0)= S^3\setminus \gamma_+(0,1)$ and $\mc D(1)= \gamma_-([0,1])$.

\medskip

We introduce the standard $k$-dimensional parameter simplex, denoted by 
\[
    \Delta_k:=\{(x_1,\cdots, x_k)\in[0,1]^k: x_1\leq x_2\leq \cdots\leq x_k\}. 
\]
When $k$ is fixed, we sometimes write $x_0 = 0$ and $x_{k+1} = 1$. 

For any $x = (x_1, x_2, \dots, x_k) \in \Delta_k$, we define the adjacent coordinate gaps as 
\[ d_i = x_{i+1} - x_i, \quad \text{ for } i = 1, \dots, k-1.\]
Given $\Lambda>0$, we construct the map 
\[ \xi^\Lambda = (\xi_1^\Lambda, \xi_2^\Lambda, \dots, \xi_{k-1}^\Lambda) : \Delta_k \to [0,1]^{k-1} \]
by recursively defining the components. Setting the base case $\xi_0^\Lambda := 0$, we define:
\begin{equation}\label{eq:xilambda}
    \xi_i^\Lambda = e^{-\Lambda d_i}(1 - \xi_{i-1}^\Lambda), \quad \text{for } i = 1, 2, \dots, k-1.
\end{equation}
We also define $\xi_k^\Lambda := 0$. We always omit the index $\Lambda$ when there is no ambiguity.

\begin{proposition}\label{prop:properties of xi}
The above construction satisfies the following properties:
\begin{enumerate}[label=(\roman*)]
    \item The map $\xi : \mb R_{>0}\times \Delta_k \to [0,1]^{k-1}$ is continuous, and $\xi$ maps $\mb R_{>0}\times (\Delta_k\setminus \partial \Delta_k)$ into $(0, 1)^{k-1}$.
    
    \item  For any index $i$, the fact that $d_i \ge 0$ ensures that $e^{-\Lambda d_i} \leq 1$; applying this to \eqref{eq:xilambda} yields $\xi_i \le 1 \cdot (1 - \xi_{i-1})$, that is, $\xi_{i-1} + \xi_i \le 1$.

    \item  Given $\epsilon>0$, suppose $d_i > -\frac{\ln \epsilon}{\Lambda}$; since $\xi_{i-1} \ge 0$ and hence $(1 - \xi_{i-1}) \le 1$, we have the upper bound $\xi_i \le e^{-\Lambda d_i}  \leq  e^{-\Lambda d} = \epsilon$.
\end{enumerate}
\end{proposition}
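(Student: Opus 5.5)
The plan is to argue by induction on $i$ using the recursion \eqref{eq:xilambda}, treating each of (i)--(iii) in turn. Items (ii) and (iii) are essentially contained in their statements; the only real content is in (i).

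\textbf{Continuity.} For (i), first observe that each gap $d_i=x_{i+1}-x_i$ is a continuous (indeed linear) function of $x\in\Delta_k$. Hence $(\Lambda,x)\mapsto e^{-\Lambda d_i}$ is continuous on $\mb R_{>0}\times\Delta_k$. Inductively, if $\xi_{i-1}$ is continuous (the base case $\xi_0\equiv 0$ is trivial), then $\xi_i=e^{-\Lambda d_i}(1-\xi_{i-1})$ is a product of continuous functions, so it is continuous.

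\textbf{Range.} In the same induction I will also carry the bound $\xi_i\in[0,1]$. If $\xi_{i-1}\in[0,1]$, then $1-\xi_{i-1}\in[0,1]$. Since $d_i\ge 0$ and $\Lambda>0$, we have $e^{-\Lambda d_i}\in(0,1]$, and therefore $\xi_i\in[0,1]$. This shows that $\xi$ maps into $[0,1]^{k-1}$.

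\textbf{Interior points.} Now take $x\in\Delta_k\setminus\partial\Delta_k$. Then all inequalities are strict, $0<x_1<\cdots<x_k<1$, so in particular $d_i>0$ for $1\le i\le k-1$. This gives $e^{-\Lambda d_i}\in(0,1)$. Inductively, assume $\xi_{i-1}\in[0,1)$; this holds at the start since $\xi_0=0$. Then $1-\xi_{i-1}\in(0,1]$, and hence $\xi_i=e^{-\Lambda d_i}(1-\xi_{i-1})\in(0,1)$. This closes the induction and gives $\xi(\Lambda,x)\in(0,1)^{k-1}$.

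\textbf{Items (ii) and (iii).} For (ii), use $e^{-\Lambda d_i}\le 1$ together with $1-\xi_{i-1}\ge 0$, the latter from the range bound above. This gives $\xi_i\le 1-\xi_{i-1}$, which is the claim. For (iii), use $0\le 1-\xi_{i-1}\le 1$ and the monotonicity of $t\mapsto e^{-\Lambda t}$. From $d_i>-\ln\epsilon/\Lambda$ we get $\xi_i\le e^{-\Lambda d_i}<e^{\ln\epsilon}=\epsilon$; the statement's ``$d$'' is just the threshold $-\ln\epsilon/\Lambda$.

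I expect no genuine obstacle here. The only point needing care is keeping the a priori bound $0\le\xi_{i-1}\le1$ in the induction, since every sign in (ii) and (iii) depends on it.
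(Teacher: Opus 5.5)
Your proposal is correct and matches the paper's argument: the paper gives (ii) and (iii) with exactly these one-line justifications and treats (i) as immediate from the recursion \eqref{eq:xilambda}, which your induction makes explicit. Carrying the bound $0\le\xi_{i-1}\le 1$ through the induction is a worthwhile addition, because the paper's step $\xi_i\le 1\cdot(1-\xi_{i-1})$ silently uses $1-\xi_{i-1}\ge 0$.
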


\subsection{Construction of family of spheres}

We now use the continuous map $\xi$ to construct a continuous family of piecewise-smooth embedded spheres. 
\begin{definition}\label{def:Phi_Lambda}
Given $\Lambda>0$ and a positive integer $k$, we define 
\[ \Phi_{\Lambda}=(\Phi_{\Lambda}^1,\Phi_{\Lambda}^2):\Delta_k \to \oli{\ms E} \]
as follows; see Figure \ref{fig:k-family}. 
\begin{enumerate}
    \item $\Phi^1_{\Lambda}(x_1, x_2, \dots, x_k)$ is defined by the union of the interior annuli (or disks):
    \[ \Upsilon(x_i) \cap \mathcal{D}(\xi_{i}) \setminus \mathcal{D}(1-\xi_{i-1})  \text{ for all odd $i$ with $1\leq i\leq k$}, \]
    \[ \Upsilon(x_i) \cap \mathcal{D}(\xi_{i-1}) \setminus \mathcal{D}(1-\xi_i)  \text{ for all even $i$ with $1\leq i\leq k$}, \]
    along with the connecting boundaries (the ``necks'') as follows:
    \[ \partial \mathcal{D}(\xi_i)\cap (\Omega_{x_{i+1}}\setminus\oli{\Omega_{x_i}}) =  \bigcup_{x_i \le x \le x_{i+1}} \partial \left( \Upsilon(x) \cap \mathcal{D}(\xi_i) \right), \quad \text{for odd } i \in [1, k-1],\]
    \[ \partial \mathcal{D}(1-\xi_i)\cap (\Omega_{x_{i+1}}\setminus\oli{\Omega_{x_i}}) = \bigcup_{x_i \le x \le x_{i+1}} \partial \left( \Upsilon(x) \cap \mathcal{D}(1-\xi_i) \right), \quad \text{for even } i\in[1, k-1]. \]
    \begin{rmk}
        Proposition \ref{prop:properties of xi} (ii) ensures that $\mathcal{D}(\xi_i)\supset \mathcal{D}(1-\xi_{i-1})$ and $\mathcal{D}(\xi_{i-1}) \supset \mathcal{D}(1-\xi_i)$, so $\Phi^1_\Lambda$ is well-defined.
    \end{rmk}

    \item $\Phi_{\Lambda}^2:\Delta_k\to \mc C(S^3)$ is the union of the following open sets:
    \[  (\Omega_{x_{i+1}}\setminus\oli{\Omega_{x_i}})\setminus \oli{\mc D(\xi_i)} = \bigcup_{x_i < x < x_{i+1}}  \left( \Upsilon(x) \setminus \oli{\mathcal{D}(\xi_i)} \right)  \quad \text{for odd } i \in [1, k], \]
    \[ (\Omega_{x_{i+1}}\setminus\oli{\Omega_{x_i}}) \setminus \oli{\mathcal{D}(1-\xi_i) }= \bigcup_{x_i < x < x_{i+1}}  \left( \Upsilon(x) \setminus \oli{\mathcal{D}(1-\xi_i) }\right)   \quad \text{for even } i \in [0, k]. \]
    \begin{rmk}
        One can easily verify that $\Phi_{\Lambda}^2 (1,\cdots,1) = \llbracket S^3\rrbracket$ and 
        \[ \partial \Phi_\Lambda^2 (x_1,\cdots,x_k) =\llbracket \Phi_\Lambda^1(x_1,\cdots,x_k)\rrbracket \]
        in the sense of mod 2 integral currents.
    \end{rmk}
\end{enumerate}
\end{definition}
\begin{figure}[ht]
	\begin{center}
		\def\svgwidth{0.6\columnwidth}
		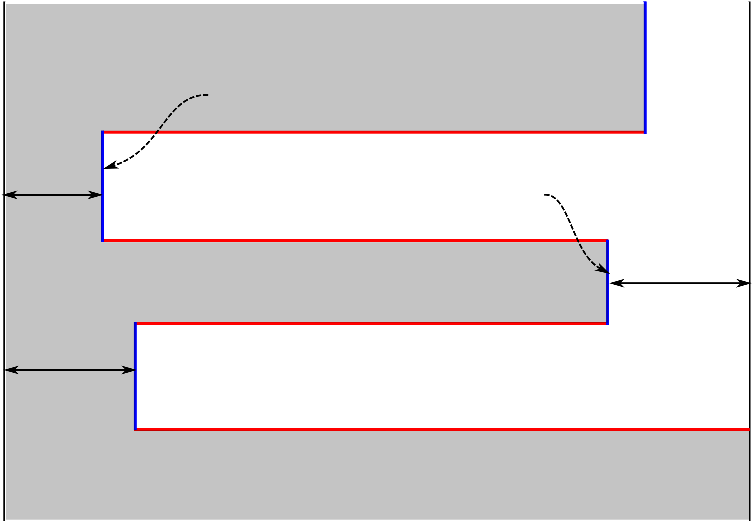
		\caption{Multiple family of spheres.}
        This is an example of $\Phi_\Lambda(x)$ for $x_1<x_2<x_3<\cdots$. The left vertical line is $\gamma_+$ and the right vertical line is $\gamma_-$. From the bottom to the top, the $i$-th red line stands for $\Phi^1_\Lambda(x)\cap \Upsilon(x_i)$; the $i$-th vertical blue one is the $i$-th neck whose radius is $\xi_i$. The gray part is $\Phi_\Lambda^2(x)$.
		\label{fig:k-family}
	\end{center}
\end{figure}

By Proposition \ref{prop:properties of xi}, we readily have the following continuity properties.
\begin{proposition}\label{prop:2k-1 pieces}\label{lem:continuity of Phi_Lambda}
Assume that $k>1$.
\begin{enumerate}
\item For $x\in \Delta_k\setminus \partial \Delta_k$ and $\Lambda>0$, $\Phi_\Lambda^1(x)$ consists of two smoothly embedded disks and $(2k-3)$ smoothly embedded annuli ($(k-2)$ interior annuli and $(k-1)$ necks), and these surfaces vary continuously with respect to $x$ in the $C^\infty$ topology. 

\item For $x=(0,\cdots,0,x_1,\cdots,x_m,1,\cdots,1)\in \Delta_k$ with $0<x_1<\cdots<x_m<1$ ($m\leq k$), $\Phi_\Lambda^1$ converges to $\sum_{i=1}^m[\Upsilon(x_i)]$ in the sense of varifolds as $\Lambda\to\infty$.  

\item The map $(\Lambda, x)\in \mb R_{>0}\times \Delta_k \mapsto \Phi_\Lambda(x)\in \oli{\ms E}$ is continuous in the $\ms F$-topology.
\end{enumerate}
\end{proposition}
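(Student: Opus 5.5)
The plan is to prove the three items in order. Each one reduces to the explicit formula \eqref{eq:xilambda} for $\xi$, the properties of the retraction $\mc R$ and of the foliation $\Upsilon$, and Proposition \ref{prop:properties of xi}.

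\textbf{Item (1).} For $x\in\Delta_k\setminus\partial\Delta_k$ we have $0<x_1<\cdots<x_k<1$, so every $d_i>0$. By Proposition \ref{prop:properties of xi}(i), each $\xi_i\in(0,1)$ for $1\le i\le k-1$. By induction from \eqref{eq:xilambda}, $\xi_i<1-\xi_{i-1}$ strictly, because $e^{-\Lambda d_i}<1$.

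Since $\xi_0=\xi_k=0$, we get $\mc D(\xi_0)=\mc D(0)$, which is everything off $\gamma_+$, and $\mc D(1-\xi_0)=\mc D(1)=\gamma_-$. So the pieces on $\Upsilon(x_1)$ and $\Upsilon(x_k)$ are, respectively, $\Upsilon(x_1)\cap\mc D(\xi_1)\setminus\gamma_-$ (closure a disk) and the analogous piece on $\Upsilon(x_k)$. Each of these is a disk by property (5) of $\mc R$. Each middle piece $\Upsilon(x_i)$, $2\le i\le k-1$, is the difference of two nested disks $\mc D(\cdot)\cap\Upsilon(x_i)$. By the strict inequality the nesting is proper, so the piece is an annulus, giving $k-2$ annuli.

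The necks are the sets $\bigcup_{x\in[x_i,x_{i+1}]}\partial(\Upsilon(x)\cap\mc D(\cdot))$. Since $\gamma_\pm$ is transverse to the leaves and $\mc R$ preserves leaves, each neck is a cylinder $S^1\times[x_i,x_{i+1}]$, giving $k-1$ necks. The total is $2+(k-2)+(k-1)$ pieces.

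Smooth dependence on $x$ follows because $\xi$ is smooth in $x$ on the interior and $\Upsilon,\mc R$ are smooth. Here I assume the retraction is smooth away from $\gamma_\pm$, which I would state as part of its construction.

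\textbf{Item (3).} I would first prove $\mc F$-continuity of $\Phi^2_\Lambda$. It is a finite union of regions whose boundaries move continuously in $(\Lambda,x)$, so its volume changes continuously, including when some $d_i\to0$ or $x_1\to0$, $x_k\to1$, where a region degenerates to zero volume.

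Next I would prove varifold continuity of $\Phi^1_\Lambda$ by checking area and tangent-plane convergence piece by piece. On the interior of $\Delta_k$ this is item (1).

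\textbf{Main obstacle.} The hard step is continuity at $\partial\Delta_k$. There I would split into cases.
\begin{itemize}
\item If $x_i=x_{i+1}$ at $\hat x$, then as $x\to\hat x$ the neck height $d_i\to0$. The neck's area is bounded by (uniform boundary length)$\times$(height), which tends to $0$, using property (5) and transversality.
\item The sheets on $\Upsilon(x_i)$ and $\Upsilon(x_{i+1})$ converge to complementary pieces of the same leaf that overlap or cancel. This is consistent with the definition at $\hat x$ itself, since $\xi$ is continuous on all of $\Delta_k$.
\item When $x_1\to0$ or $x_k\to1$, the leaves shrink to points, and their areas and neck lengths go to $0$ by continuity of $\Upsilon$ at its endpoints.
\end{itemize}
Since $\Phi_\Lambda(\hat x)$ is defined by the same formula with $\xi(\hat x)$, in each case one only needs the Hausdorff/varifold convergence of the degenerating strips, which follows from these area estimates.

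\textbf{Item (2).} Take $x=(0,\dots,0,x_1,\dots,x_m,1,\dots,1)$ with $0<x_1<\cdots<x_m<1$. The gaps between distinct interior values are positive, so by Proposition \ref{prop:properties of xi}(iii) the corresponding $\xi_i\to0$ as $\Lambda\to\infty$. Then the necks shrink to the curve $\gamma_-$ or $\gamma_+$ (area at most length $\times$ height of a small circle, tending to $0$), and the sheets fill the whole leaf $\Upsilon(x_j)$.

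The repeated coordinates sitting at $0$ or $1$ contribute degenerate leaves with area tending to $0$. Hence $\Phi^1_\Lambda(x)\to\sum_{j=1}^m[\Upsilon(x_j)]$ as varifolds.
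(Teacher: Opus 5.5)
Your proposal is correct and is essentially the paper's argument: the paper only asserts that all three items follow directly from Definition \ref{def:Phi_Lambda} and Proposition \ref{prop:properties of xi}, and you have written out that verification in more detail. One point in your boundary analysis needs fixing: at a face $x_i=x_{i+1}$ the sheets do not ``overlap or cancel''. Instead, the recursion \eqref{eq:xilambda} forces $\xi_{i-1}+\xi_i\to 1$, so the sheet on $\Upsilon(x_i)$ degenerates ($\mk A_i\to 0$, as in the proof of Lemma \ref{lem:bounded by kW}), and it is this degeneration, not a mod-2 cancellation (which varifolds would not see), that makes the varifold limit agree with $\Phi^1_\Lambda(\hat x)$ with multiplicity one.
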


We will then prove that as $\Lambda\to\infty$, $\Phi_\Lambda^2$ uniformly converges to a continuous map in the flat topology.
\begin{lemma}\label{lem:flat convergence of Phi}
Given a positive integer $k$ and a sequence $(x^\Lambda_1,\cdots, x^\Lambda_k)(\in \Delta_k)\to (x_1,\cdots, x_k)$ as $\Lambda \to \infty$, we have 
\[
     \lim_{\Lambda\to\infty} \Phi_\Lambda^2(x^\Lambda_1,\cdots, x^\Lambda_k) = \sum_{i=1}^k \llbracket \Omega(x_i) \rrbracket + (k-1)\llbracket S^3\rrbracket .
\]
\end{lemma}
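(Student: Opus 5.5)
The plan is to compute everything at the level of indicator functions mod $2$, and to bound the flat distance by the volume of a symmetric difference. For mod $2$ Caccioppoli sets, $\mc F(\llbracket A\rrbracket,\llbracket B\rrbracket)\le \mc H^3(A\triangle B)$. It therefore suffices to identify the right-hand side as a concrete set $E$ and to show $\mc H^3(\Phi^2_\Lambda(x^\Lambda)\triangle E)\to 0$.

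\emph{Step 1: identify $E$.} Write $S_j=\Omega_{x_{j+1}}\setminus\oli{\Omega_{x_j}}$ for the limiting slabs, $j=0,\dots,k$, with $x_0=0$ and $x_{k+1}=1$. For a point in $S_j$, the number of indices $i$ with the point in $\Omega(x_i)$ is $k-j$. After adding $k-1$ copies of $\llbracket S^3\rrbracket$, the mod $2$ parity at that point is $2k-1-j\equiv j+1$. Hence the right-hand side equals $\llbracket E\rrbracket$ with
\[ E=\bigcup_{j \text{ even}} S_j . \]
This holds up to null sets: the leaves $\Upsilon(t)$ have measure zero, and degenerate slabs with $x_j=x_{j+1}$ are empty, which is consistent with $\llbracket\Omega(x_j)\rrbracket+\llbracket\Omega(x_{j+1})\rrbracket=0$.

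\emph{Step 2: compare slab by slab.} Let $S_j^\Lambda$ denote the slabs defined by $x^\Lambda$. By Definition \ref{def:Phi_Lambda}, $\Phi^2_\Lambda(x^\Lambda)$ is the disjoint union over $j$ of
\[ S^\Lambda_j\setminus\oli{\mc D(\xi_j)} \ (j \text{ odd}), \qquad S^\Lambda_j\setminus\oli{\mc D(1-\xi_j)} \ (j \text{ even}). \]
The function $t\mapsto\mc H^3(\Omega_t)$ is continuous and hence uniformly continuous, because $\Upsilon$ is a foliation by spheres. It follows that $\mc H^3(S_j^\Lambda\triangle S_j)\to0$. It then remains to show, for each $j$:
\begin{itemize}
\item for odd $j$, $\mc H^3\bigl(S^\Lambda_j\setminus\mc D(\xi_j)\bigr)\to 0$;
\item for even $j$, $\mc H^3\bigl(S^\Lambda_j\cap\mc D(1-\xi_j)\bigr)\to0$.
\end{itemize}

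\emph{Step 3: split into two cases.}
\begin{itemize}
\item If $d_j=x_{j+1}-x_j=0$, the slab volume $\mc H^3(S_j^\Lambda)$ already tends to $0$, whatever $\xi_j$ is. The boundary slabs $j=0$ and $j=k$ fit the pattern as well, since $\xi_0=\xi_k=0$ gives $\mc D(1)=\gamma_-$ (a null set) and $\mc D(0)=S^3\setminus\gamma_+$ (full measure).
\item If $d_j>0$, then $d_j^\Lambda\ge d_j/2$ for large $\Lambda$. Proposition \ref{prop:properties of xi}(iii) gives $\xi_j\le e^{-\Lambda d_j/2}\to0$.
\end{itemize}
In the second case, the properties of the retraction $\mc R$ are used: $\mc D(s)$ is monotone decreasing in $s$, $\mc D(0)$ has full measure, and $\mc D(1)=\gamma_-$. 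From these, $\mc H^3(S^3\setminus\mc D(s))\to0$ as $s\to0$ and $\mc H^3(\mc D(s))\to0$ as $s\to1$, by continuity of measure along monotone families. Here one should check that $\bigcup_{s>0}\mc D(s)=\mc D(0)$ and $\bigcap_{s<1}\mc D(s)=\gamma_-$. Both follow from the continuity of $\mc R$ and properties (1)--(4). Combining the two cases shows that every slab contribution converges, which proves the lemma.

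\emph{Main obstacle.} The delicate point is the uniformity: $x^\Lambda$ moves with $\Lambda$, and $\xi_j$ depends on all earlier gaps through the recursion \eqref{eq:xilambda}. This is handled by two observations. First, only the crude bound $\xi_j\le e^{-\Lambda d^\Lambda_j}$ is needed, and it ignores $\xi_{j-1}$ entirely. Second, in the case $d_j=0$ no information about $\xi_j$ is needed at all.
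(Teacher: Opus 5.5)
Your argument is correct and is essentially the paper's proof. You decompose $S^3$ into slabs and compute the parity of $\sum_i\llbracket\Omega(x_i)\rrbracket+(k-1)\llbracket S^3\rrbracket$ on each one. You then use $\xi_j\le e^{-\Lambda d_j^\Lambda}\to 0$ on slabs with positive limiting gap, so odd slabs empty out and even slabs fill up. You are somewhat more explicit than the paper about the vanishing slabs (where $x_j=x_{j+1}$) and the flat-norm bound $\mc F\le \mc H^3(\,\cdot\,\triangle\,\cdot\,)$.

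One caveat concerns your claim that $\bigcap_{s<1}\mc D(s)=\gamma_-$, and hence $\mc H^3(\mc D(s))\to 0$ as $s\to 1^-$, follows from continuity of $\mc R$ and properties (1)--(4). It does not: the slices $\Upsilon(t)\setminus\gamma_+(t)$ are non-compact, and points near $\gamma_+$ can be moved arbitrarily slowly. A retraction can satisfy all the listed properties while $\mc D(s)$ stays a fixed disk for all $s<1$. The paper, however, uses exactly this convergence, together with $\mc H^3(S^3\setminus\mc D(s))\to 0$ as $s\to 0^+$, as an unstated property of the intended retraction. So this is a flaw in your justification, not a gap relative to the paper.
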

\begin{proof}
As above, let $x_0=x_{k+1}=0$. Suppose that $\{x_{m_1}, x_{m_2},\cdots\}$ are all of the distinct coordinates of $x$. Then $S^3$ can be decomposed into 
\[
    S^3 = \bigcup \left(\Omega(x_{m_{i+1}})\setminus \Omega(x_{m_i})\right). 
\]
It suffices to prove the desired equality when restricted to $\Omega(x_{m_{i+1}})\setminus \Omega(x_{m_i})$ for all $i\geq 1$.

Consider $1\leq m\leq k$ such that $x_{m+1}-x_m>0$.  
Then 
\[
 (x_{m+1}^\Lambda-x_m^\Lambda) \to (x_{m+1}-x_m)>0, \text{ as $\Lambda \to \infty$}.
\]
By the definition \eqref{eq:xilambda} of $\xi^\Lambda$, we obtain that 
\begin{equation}\label{eq:xi_m goes to 0}
    \xi_m^\Lambda:= e^{-\Lambda |x_{m+1}^\Lambda-x_m^\Lambda|} (1-\xi_{m-1}^\Lambda) \to 0, \text{ as $\Lambda\to\infty$}.
\end{equation}
Suppose that $m$ is odd. Then by Definition \ref{def:Phi_Lambda} (2), we have
\[
    \Phi_\Lambda^2 (x_1,\cdots,x_k) \res \left( \Omega(x_{m+1}^\Lambda)\setminus \oli{\Omega({x_m^\Lambda})} \right) = 
    \left(\Omega(x_{m+1}^\Lambda)\setminus \oli{\Omega({x_m^\Lambda})} \right)\setminus \oli{\mc D(\xi_m^\Lambda)},
\]
which converges to $\emptyset$ in the flat topology by \eqref{eq:xi_m goes to 0}. 
Hence we obtain
\[
 \Big(\lim_{\Lambda\to\infty} \Phi_\Lambda^2(x_1^\Lambda,\cdots, x_k^\Lambda)\Big) \res \big(\Omega({x_{m+1})}\setminus \oli{\Omega({x_m})}\big) = \lim_{\Lambda\to\infty} \left(\Omega({x_{m+1}^\Lambda})\setminus \oli{\Omega({x_m^\Lambda})}\right)\setminus \oli{\mc D(\xi_m^\Lambda)}  = \emptyset .
\]
Moreover, since $m$ is odd, we have
\begin{align*}
&\ \ \ \ \left(\sum_{i=1}^k \llbracket \Omega(x_i) \rrbracket + (k-1)\llbracket S^3\rrbracket\right) \res \big(\Omega({x_{m+1}})\setminus \oli{\Omega({x_m})}\big) \\
&= (k-m) \llbracket \Omega({x_{m+1}})\setminus \oli{\Omega({x_m})}\rrbracket + (k-1)\llbracket \Omega({x_{m+1}})\setminus \oli{\Omega({x_m})}\rrbracket =0.   
\end{align*}
This gives the desired equality for odd $m$. 

Similarly, when $m$ is even, we have that
\[
    \Phi_\Lambda^2 (x_1,\cdots,x_k) \res \left(\Omega(x_{m+1}^\Lambda)\setminus \oli{\Omega({x_m^\Lambda})} \right) =
    \left(\Omega(x_{m+1}^\Lambda)\setminus \oli{\Omega({x_m^\Lambda})} \right)\setminus \oli{\mc D(1-\xi_m^\Lambda)},
\]
which converges to $\llbracket \Omega({x_{m+1}})\setminus \oli{\Omega({x_m})}\rrbracket$ in the flat topology by \eqref{eq:xi_m goes to 0}. 
Hence we obtain
\[
 \Big(\lim_{\Lambda\to\infty} \Phi_\Lambda^2(x_1^\Lambda,\cdots, x_k^\Lambda)\Big) \res \big(\Omega({x_{m+1})}\setminus \oli{\Omega({x_m})}\big) = \llbracket \Omega({x_{m+1}})\setminus \oli{\Omega({x_m})}\rrbracket .
\]
Since $m$ is even, we also have that 
\begin{align*}
& \ \ \ \ \left(\sum_{i=1}^k \llbracket \Omega(x_i) \rrbracket + (k-1)\llbracket S^3\rrbracket\right) \res \big(\Omega({x_{m+1}})\setminus \oli{\Omega({x_m})}\big) \\
& = (k-m) \llbracket \Omega({x_{m+1}})\setminus \oli{\Omega({x_m})}\rrbracket + (k-1)\llbracket \Omega({x_{m+1}})\setminus \oli{\Omega({x_m})}\rrbracket = \llbracket \Omega({x_{m+1}})\setminus \oli{\Omega({x_m})}\rrbracket.   
\end{align*}
This completes the proof Lemma \ref{lem:flat convergence of Phi}.
\end{proof}

\subsection{Area of small necks}
To consider the area of $\Phi_{\Lambda}^1$, we introduce the following notions.

For any $0\leq a\leq b\leq 1$ and $\sigma\in(0,1)$, denote by
\[
    \mc C_+ (a,b,\sigma) = \mc H^2 \left( \partial \mc D(\sigma) \cap [\Omega_b \setminus \oli{\Omega_a}]\right),
\]
and
\[
    \mc C_- (a,b,\sigma) = \mc H^2 \left(\partial \mc D(1-\sigma) \cap [\Omega_b \setminus \oli{\Omega_a}]\right).
\]
For simplicity, denote by
\begin{equation}\label{eq:C_i}
\mk C_i := 
\begin{cases} 
    \mc C_+(x_i,x_{i+1},\xi_i) & \text{ for odd } 1\leq i\leq k-1, \\[0.5em] 
    \mc C_-(x_i,x_{i+1},\xi_i) & \text{ for even } 1\leq i\leq k-1.
\end{cases}    
\end{equation}
Denote by
\begin{equation}\label{eq:A_i}
   \mk A_i:=
    \begin{cases}
        \mc H^2( \Upsilon(x_i) \cap \mathcal{D}(\xi_{i}) \setminus \mathcal{D}(1-\xi_{i-1})) &\text{ for odd } 1\leq i\leq k, \\[0.5em]
        \mc H^2( \Upsilon(x_i) \cap \mathcal{D}(\xi_{i-1}) \setminus \mathcal{D}(1-\xi_{i})) &\text{ for even } 1\leq i\leq k.
    \end{cases} 
\end{equation}
Then we have that
\[
    \mc H^2(\Phi_{\Lambda}^1) = \sum_{i=1}^{k-1} \mk C_i+ \sum_{i=1}^k \mk A_i.
\]

For all $t\in[0,1]$ and $\sigma\in[0,1]$, we also define
\begin{equation}\label{eq:E+ easy bound}
    \mc E_+(t, \sigma) := \mc H^2(\Sigma_0)- \mc H^2(\Upsilon(t)\cap \mc D(\sigma)),
\end{equation}
\begin{equation}\label{eq:E- easy bound}
    \mc E_-(t, \sigma):=\mc H^2(\Sigma_0)-\mc H^2(\Upsilon(t)\setminus \mc D(1-\sigma)).
\end{equation}

We now proceed to bound the area $\mc C_{\pm}(s, t, \sigma)$ of the connecting necks using $\mc E_\pm(x,\sigma)$. Recall that $\mc E_\pm(x,\sigma)$ measures the difference of area between the maximal slice $\Upsilon(\frac{1}{2})$ and the slice $\Upsilon(x)$ with a single disk removed. We will first demonstrate that this difference is strictly bounded from below by a positive constant whenever $\sigma$ is bounded away from zero. Furthermore, because necks with small heights possess a uniformly small area, we can conclude that the area of these short necks is strictly bounded from above by $\mc E_\pm(t,\sigma)$ provided $\sigma$ is bounded away from zero.

\begin{lemma}\label{lem:choose v}
Given $\sigma_0>0$, there exists $h_0>0$ such that 
for all $0\leq s\leq t\leq 1$ with $|t-s|\leq h_0$, $\sigma\in[\sigma_0,1]$,
\[
    \mc C_\pm(s,t,\sigma) < \frac{1}{2}\mc E_\pm(x,\sigma), \quad \text{ for all $x\in (0,1)$}.
\]
\end{lemma}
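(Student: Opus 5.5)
We split the argument into two independent estimates: a uniform positive lower bound for $\mc E_\pm(x,\sigma)$ when $\sigma\geq\sigma_0$, and a neck-area bound $\mc C_\pm(s,t,\sigma)\to 0$ as $|t-s|\to 0$, uniformly in $s$ and $\sigma$. Then $h_0$ is chosen so that the second quantity falls below half the lower bound from the first.

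\emph{Step 1: lower bound for $\mc E_\pm$.} Since $\Upsilon$ is a mean convex foliation with $\Sigma_0=\Upsilon(\frac12)$ the maximal slice (the area decreases strictly away from $t=\frac12$ along the mean convex flow), we have $\mc H^2(\Upsilon(x))\leq \mc H^2(\Sigma_0)$ for all $x\in[0,1]$. Hence
\[ \mc E_+(x,\sigma)\geq \mc H^2(\Upsilon(x)\setminus \mc D(\sigma)), \]
and this removed piece contains $\Upsilon(x)\setminus\mc D(\sigma_0)$ because the sets $\mc D(s)$ are nested by property (2) of $\mc R$. The difficulty is that for $x$ near $0$ or $1$ the slices $\Upsilon(x)$ shrink to points, so $\mc H^2(\Upsilon(x)\setminus\mc D(\sigma_0))\to 0$ there. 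Near the endpoints we therefore use the full deficit instead: $\mc E_+(x,\sigma)\geq \mc H^2(\Sigma_0)-\mc H^2(\Upsilon(x))$, which is close to $\mc H^2(\Sigma_0)$ once $\mc H^2(\Upsilon(x))$ is small. On the complementary compact range $x\in[\delta,1-\delta]$, the quantity $\mc H^2(\Upsilon(x)\setminus\mc D(\sigma_0))$ is continuous in $x$ and positive, since $\mc D(\sigma_0)$ omits a neighborhood of $\gamma_+(x)$ in $\Upsilon(x)$. Combining the two regimes gives a constant $c_0=c_0(\sigma_0)>0$ with $\mc E_\pm(x,\sigma)\geq c_0$ for all $x\in(0,1)$ and $\sigma\in[\sigma_0,1]$. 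The case of $\mc E_-$ is symmetric, with $\mc D(1-\sigma)$ in place of $\mc D(\sigma)$.

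\emph{Step 2: small necks have small area.} The neck $\partial\mc D(\sigma)\cap(\Omega_t\setminus\oli{\Omega_s})$ is the union over $x\in[s,t]$ of the boundary curves $\partial(\Upsilon(x)\cap\mc D(\sigma))$. These curves have uniformly bounded length by property (5), and they move with uniformly bounded normal speed in $x$ by smoothness of $\mc R$ and $\Upsilon$. A coarea/area-formula estimate then gives $\mc C_+(s,t,\sigma)\leq C|t-s|$ with $C$ independent of $s$, $t$ and $\sigma$; the same holds for $\mc C_-$. Choosing $h_0<c_0/(2C)$ finishes the proof.

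\emph{Main obstacle.} The uniformity of Step 2 at the degenerate ends is the delicate point: $x$ near $p,q$, and $\sigma$ near $0$ or $1$, where the curves collapse onto $\gamma_\pm$. For $\sigma\geq\sigma_0$ only the collapse toward $\gamma_-$ as $\sigma\to1$ matters, and there the neck degenerates to a curve with area tending to $0$, which only helps. If a Lipschitz bound fails near $p$ or $q$, I would instead use continuity of $(s,t,\sigma)\mapsto\mc C_\pm(s,t,\sigma)$ on the compact set $\{0\le s\le t\le 1,\ \sigma\in[\sigma_0,1]\}$, together with $\mc C_\pm(s,s,\sigma)=0$, and conclude by uniform continuity.
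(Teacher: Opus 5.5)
Your proposal is correct and follows the paper's proof essentially step for step. You obtain a uniform lower bound $\mathcal E_\pm(x,\sigma)\ge c_0$ for $\sigma\ge\sigma_0$ by splitting into two regimes: near the endpoints the full deficit $\mathcal H^2(\Sigma_0)-\mathcal H^2(\Upsilon(x))$ is large, and on the compact middle range $\mathcal H^2(\Upsilon(x)\setminus\mathcal D(\sigma_0))$ is continuous and positive. You then choose $h_0$ so that the neck area, which tends to zero uniformly as $t-s\to 0$, stays below $c_0/2$. The paper simply asserts this uniform smallness of short necks. Your coarea bound, together with the uniform-continuity fallback near $p,q$ (where a Lipschitz bound in $t-s$ may genuinely fail), gives somewhat more justification than the original.
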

\begin{proof}
Note that 
\[
    \mc H^2(\Upsilon(t))\to 0 \text{ as $t\to 0^+$ or $t\to 1^-$}.
\]
Hence there exists $\alpha\in(0,1)$ such that 
\[
    \mc H^2(\Upsilon(t)) < \frac{1}{2}\mc H^2(\Sigma_0), \text{ for all $t\in[0,\alpha]\cup[1-\alpha,1]$}.
\] 
It follows that 
\begin{equation}\label{eq:mc E lower bound at ends}
    \mc E_\pm(t,\sigma) \geq \mc H^2(\Sigma_0) - \mc H^2(\Upsilon(t))\geq  \frac{1}{2}\mc H^2(\Sigma_0), \text{ for all $t\in[0,\alpha]\cup[1-\alpha,1]$}.
\end{equation}
Observe that $t\mapsto \mc H^2(\Upsilon(t)\setminus \mc D(\sigma_0))$ is a continuous function defined on $[0,1]$. Moreover, for $t\in[\alpha,1-\alpha]$,
\[  
    \mc H^2(\Upsilon(t)\setminus \mc D(\sigma_0))>0.
\]
Hence there exists $0<\epsilon_0<\frac{1}{2}\mc H^2(\Sigma_0)$ such that
\[  
\mc H^2(\Upsilon(t)\setminus \mc D(\sigma_0))>\epsilon_0, \text{ for all $t\in[\alpha,1-\alpha]$},
\]
which yields
\[
    \mc H^2(\Upsilon(t)\setminus \mc D(\sigma)) >\epsilon_0, \text{ for all $t\in[\alpha,1-\alpha]$ and $\sigma\in[ \sigma_0,1]$}.
\]
Thus we have that 
\[
\begin{aligned}
    \mc E_+(t,\sigma) & = \mc H^2(\Sigma_0) - \big(\mc H^2(\Upsilon(t)) - \mc H^2(\Upsilon(t)\setminus \mc D(\sigma)) \big) \\
    & \geq \mc H^2(\Upsilon(t)\setminus \mc D(\sigma)) >\epsilon_0, \text{ for all $t\in[\alpha,1-\alpha]$ and $\sigma\in [\sigma_0,1]$}.
\end{aligned}   
\]
This together with \eqref{eq:mc E lower bound at ends} gives that 
\begin{equation}\label{eq:E+ lower bd for all t}
    \mc E_+(t,\sigma) > \epsilon_0, \text{ for all $t\in[0,1]$ and $\sigma\in[\sigma_0,1]$}.
\end{equation}
Similarly, by choosing $\epsilon_0$ small enough, we also have that 
\begin{equation}\label{eq:E- lower bd for all t}
    \mc E_-(t,\sigma) > \epsilon_0, \text{ for all $t\in[0,1]$ and $\sigma\in[\sigma_0,1]$}.
\end{equation}

Observe that 
\[
    \sup \{\mc C_\pm(s,s+h,\sigma);s\in(0,1-h),\sigma\in(0,1)\}\to 0, \text{ as $h\to 0$}.
\]
Hence we can take $h_0>0$ small enough such that 
\[
     \mc C_\pm(s,t,\sigma)\leq \frac{1}{2}\epsilon_0, \text{ for all $0\leq s\leq t\leq 1$ with $(t-s)\leq h_0$},
\]
which together with \eqref{eq:E+ lower bd for all t} and \eqref{eq:E- lower bd for all t} implies that
\[
    \mc C_\pm(s,t,\sigma) \leq \frac{1}{2}\epsilon_0<\frac{1}{2}\mc E_\pm(x,\sigma), \text{ for all $0\leq s\leq t\leq 1$ with $(t-s)\leq h_0$}.
\]
This finishes the proof of Lemma \ref{lem:choose v}.
\end{proof}

By our construction of the piecewise smooth spheres, necks that are not short must have correspondingly small radii, resulting in a uniformly small area. Hence, we can successfully bound the neck area by $\mc E_\pm$ for all admissible cases.

\begin{lemma}\label{lem:small necks}
Given $\eta>0$, there exist $\Lambda_0>1$ such that for all $0<s\leq t\leq 1$ and $0\leq \sigma\leq e^{-\Lambda_0|t-s|}$,
\begin{equation}\label{eq:small necks}
    \mc C_\pm (s,t,\sigma)\leq \max\{\frac{1}{2}\mc E_\pm(x,\sigma), \eta\cdot (t-s)\}, \text{ for all $s\leq x\leq t$}.
\end{equation}
The equality holds if and only if $s=t=\frac{1}{2}$ and $\sigma=0$.
\end{lemma}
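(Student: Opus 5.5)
Split according to whether the height $h:=t-s$ is short or long, with the threshold supplied by Lemma \ref{lem:choose v}; take $\sigma_0:=\eta$, say.

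\emph{Short necks.} Apply Lemma \ref{lem:choose v} with this $\sigma_0$ and obtain $h_0>0$. If $h\le h_0$ and $\sigma\ge\sigma_0$, then Lemma \ref{lem:choose v} directly gives
\[
\mc C_\pm(s,t,\sigma)<\tfrac12\mc E_\pm(x,\sigma)\quad\text{for all } x.
\]

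\emph{Small radius.} If $\sigma<\sigma_0$ (whatever the height), I would use a geometric estimate. The neck $\partial\mc D(\sigma)\cap(\Omega_t\setminus\oli{\Omega_s})$ is swept out by the curves $\partial(\Upsilon(y)\cap\mc D(\sigma))$, $y\in[s,t]$. By property (5) of $\mc R$ these curves have uniformly bounded length. Moreover, since the leaves $\Upsilon(y)$ move with speed bounded away from $0$ and $\infty$ in the interior, the coarea formula gives
\[
\mc C_\pm(s,t,\sigma)\le C\,\ell(\sigma)\,(t-s),
\]
where $\ell(\sigma)$ is the supremum over $y$ of the length of the boundary curve at level $\sigma$. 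What I need is that $\ell(\sigma)\to0$ as $\sigma\to0$, i.e. the removed disk shrinks to a point on $\gamma_+$. I would arrange this by the choice of $\mc R$ near $s=0$, since $\mc D(0)$ is the complement of $\gamma_+$. I would then choose $\sigma_0$ so small that $C\ell(\sigma)<\eta$ for $\sigma<\sigma_0$, and take $\sigma_0$ before $h_0$, so that $\mc C_\pm<\eta(t-s)$. Near the endpoints $t\to0,1$ the leaves degenerate to points and the speed bound fails, but there the neck area tends to $0$ anyway. Alternatively, there one can use $\mc E_\pm\ge\frac12\mc H^2(\Sigma_0)$ from \eqref{eq:mc E lower bound at ends}.

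\emph{Long necks.} If $h>h_0$, choose $\Lambda_0$ with $e^{-\Lambda_0h_0}<\sigma_0$. Then $\sigma\le e^{-\Lambda_0 h}<\sigma_0$, so the small-radius case applies.

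\emph{Equality.} The strict inequalities above leave only degenerate cases. If $t=s$, then $\mc C_\pm=0$ and $\eta(t-s)=0$, so equality forces $\mc E_\pm(x,\sigma)=0$, i.e. $\mc H^2(\Upsilon(x)\cap\mc D(\sigma))=\mc H^2(\Sigma_0)$. This requires $x=\frac12$, since $\Sigma_0$ is the unique maximal leaf, and $\sigma=0$, since otherwise a disk of positive area is removed. Conversely, $s=t=\frac12$, $\sigma=0$ gives $0=0$.

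\emph{Main obstacle.} The hardest step is the small-radius estimate $\mc C_\pm\le C\ell(\sigma)(t-s)$ with $\ell(\sigma)\to0$ uniformly in the height, including near the poles $p,q$ where the foliation degenerates. If that uniformity fails near the ends, I would handle $y$ near $0,1$ separately via \eqref{eq:mc E lower bound at ends}.
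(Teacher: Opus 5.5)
Your proposal is correct and follows essentially the same route as the paper: fix $\sigma_0$ first so that thin necks satisfy the co-area bound $\mc C_\pm\le\eta(t-s)$, then take $h_0$ from Lemma \ref{lem:choose v} for this $\sigma_0$, and set $\Lambda_0=-\ln\sigma_0/h_0$ so that $\sigma\ge\sigma_0$ forces $t-s\le h_0$. The one step you leave vague is a neck that starts near a pole and ends in the interior. The paper handles it with overlapping regions (co-area on $[\frac12\beta,1-\frac12\beta]$, the bound $\mc E_\pm\ge\frac12\mc H^2(\Sigma_0)$ on $[0,\beta]\cup[1-\beta,1]$) plus a uniform bound $\mc C_\pm<\frac12\eta\beta$ for $\sigma\le\sigma_0$, so every neck of height $<\frac12\beta$ lies in one region and every taller neck satisfies $\mc C_\pm<\eta(t-s)$ automatically.
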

\begin{proof}

By \eqref{eq:mc E lower bound at ends}, there exists $\alpha\in (0,1)$ such that for all $ t\in[0,\alpha]\cup [1-\alpha,1]$,
\[
    \mc E_\pm(t,\sigma) \geq \frac{1}{2}\mc H^2(\Sigma_0).
\]
Then by the proof of Lemma \ref{lem:choose v} (taking $\epsilon_0<\mc H^2(\Sigma_0)/4$ therein), there exists $\beta\in(0,\alpha)$ such that for all $0\leq s\leq t\leq \beta$ or $1-\beta\leq s\leq t\leq 1$,
\begin{equation}\label{eq:st away from endpoints}
    \mc C_\pm(s,t,\sigma) < \frac{1}{4}\mc H^2(\Sigma_0)\leq \frac{1}{2}\mc E_\pm(x,\sigma), \text{ for all $x\in[s, t]$}.
\end{equation}
By the co-area formula, we have that
\[
     \sup_{\frac{1}{2}\beta\leq s<t\leq 1-\frac{1}{2}\beta} \frac{\mc C_{\pm} (s,t,\sigma)}{t-s} + \sup_{0<s<t<1} \mc C_\pm(s,t,\sigma) \to 0, \text{ as $\sigma\to 0$.}
\]
Hence we can take $\sigma_0>0$, such that
\begin{equation}\label{eq:small u}
     \mc C_\pm(s,t,\sigma)\leq \eta \cdot (t-s), \text{ for all $\frac{1}{2}\beta\leq s\leq t\leq 1-\frac{1}{2}\beta$ and $\sigma\in ( 0,\sigma_0]$},
\end{equation}
with equality holding if and only if $t=s$, and
\begin{equation}\label{eq:t-s large}
     \mc C_\pm(s,t,\sigma) < \frac{1}{2}\eta \beta, \text{ for all $0<s\leq t<1$ and $\sigma\in ( 0,\sigma_0]$}. 
\end{equation}

Let $h_0$ be the constant given by Lemma \ref{lem:choose v} using $\sigma_0$ here. Thus for all $0\leq s\leq t\leq 1$ with $|t-s|\leq h_0$, $1\geq \sigma\geq \sigma_0$,
\begin{equation}\label{eq:from lemma 3.7}
    \mc C_\pm(s,t,\sigma) < \frac{1}{2}\mc E_\pm(x,\sigma), \text{ for all $x\in (0,1)$}.
\end{equation}
Let
\[
    \Lambda_0=-\frac{\ln \sigma_0}{h_0}.
\]
It follows that $\sigma_0=e^{-\Lambda_0h_0}$. We now divide the desired inequality into three cases.
\begin{itemize}
    \item Suppose that $0<s\leq t\leq 1$ and $\sigma\in[ \sigma_0, e^{-\Lambda_0|t-s|}]$. Note that in this case, $e^{-\Lambda_0h_0}=\sigma_0\leq e^{-\Lambda_0|t-s|}$, which implies that 
\[
    |t-s|\leq h_0.
\]
Then by \eqref{eq:from lemma 3.7}, 
\begin{equation*}
    \mc C_\pm(s,t,\sigma) < \frac{1}{2}\mc E_\pm(x,\sigma), \text{ for all $x\in (0,1)$}.
\end{equation*}
\item Suppose that $0<s<t<1$, $\sigma\in(0,\sigma_0]$ and $(t-s)\geq \frac{1}{2}\beta$. Then by \eqref{eq:t-s large},
\[
    \mc C_\pm(s,t,\sigma) <\frac{1}{2}\eta \beta\leq \eta(t-s).
\]
\item Suppose that $0<s\leq t<1$, $\sigma\in(0,\sigma_0]$ and $(t-s)<\frac{1}{2}\beta$. Then we have the following subcases:
\begin{enumerate}
    \item $0\leq s\leq t\leq \beta$ or $1-\beta\leq s\leq t\leq 1$. Then by \eqref{eq:st away from endpoints},
    \[
    \mc C_\pm(s,t,\sigma)< \frac{1}{2}\mc E_\pm(x,\sigma), \text{ for all $x\in[s,t]$}.
    \]
    \item $\frac{1}{2}\beta\leq s\leq t\leq 1-\frac{1}{2}\beta$. Then the desired inequality \eqref{eq:small necks} follows from \eqref{eq:small u}. Moreover, if equality holds in \eqref{eq:small necks}, then $t=s$ by \eqref{eq:small u}. This further implies $\mc C_\pm(s,t,\sigma)=0$, and hence $\mc E_{\pm}(t, \sigma) = 0$. Thus $\sigma=0$ and $t = \frac{1}{2}$.
\end{enumerate}
\end{itemize}
This completes the proof of Lemma \ref{lem:small necks}.
\end{proof}
\begin{rmk}
   In the argument above, we fix $\eta$ initially, then choose $\alpha, \beta$, and $\sigma_0$ in turn, and finally define $\Lambda_0$ explicitly in terms of $\sigma_0$ and $h_0$.
\end{rmk}

We end this section by establishing an area bound for an interior slice together with its adjacent necks.
\begin{proposition}\label{prop:Ai Ci Ci-1 sum}
Given $\eta>0$, let $\Lambda_0>0$ be the constant in Lemma \ref{lem:small necks}. Let $\mk A_i$ and $\mk C_i$ be defined in \eqref{eq:A_i} and \eqref{eq:C_i} with respect to $\Phi_\Lambda: \Delta_k\to \oli{\ms E}$ for $\Lambda\geq \Lambda_0$. Then for all $1\leq i\leq k$, we have
\[
  \mk C_{i-1}  + \mk C_i +\mk A_i \leq \mc H^2(\Sigma_0)+ \eta\cdot (x_{i+1}-x_{i-1}),
\]
where we define $\mk C_0=\mk C_{k+1}:=0$, $x_0=0$, $x_{k+1}=1$.
Moreover, we also have
\[
    \mk C_{i-1} + \mk A_i\leq \mc H^2(\Sigma_0) + \eta\cdot (x_i-x_{i-1}),
\]
\[ 
   \mk C_{i}+ \mk A_i\leq \mc H^2(\Sigma_0) + \eta\cdot (x_{i+1}-x_i).
\]
The above inequalities are strict if $x_{i+1}>x_{i-1}$, $x_i>x_{i-1}$, or $x_{i+1}>x_i$, respectively.
\end{proposition}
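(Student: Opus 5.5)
The plan is to compare $\mk A_i$ with $\mc H^2(\Sigma_0)$ minus the \emph{larger} of the two defects $\mc E_\pm$ that border it, and then absorb each adjacent neck using Lemma \ref{lem:small necks}. I treat odd $i$ in detail; even $i$ is the same computation with the roles of $\mc D(\sigma)$ and $\mc D(1-\sigma)$, hence of $\mc E_+$ and $\mc E_-$, interchanged.

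\emph{Step 1 (the slice).} For odd $i$, the remark after Definition \ref{def:Phi_Lambda} gives $\mc D(1-\xi_{i-1})\subset \mc D(\xi_i)$. Hence
\[
\Upsilon(x_i)\cap \mc D(\xi_i)\setminus \mc D(1-\xi_{i-1}) \subset \Upsilon(x_i)\cap \mc D(\xi_i)
\quad\text{and}\quad
\Upsilon(x_i)\cap \mc D(\xi_i)\setminus \mc D(1-\xi_{i-1}) \subset \Upsilon(x_i)\setminus \mc D(1-\xi_{i-1}).
\]
By \eqref{eq:E+ easy bound} and \eqref{eq:E- easy bound} this yields
\[
\mk A_i\le \mc H^2(\Sigma_0)-\max\{\mc E_+(x_i,\xi_i),\,\mc E_-(x_i,\xi_{i-1})\}.
\]

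\emph{Step 2 (the necks).} For odd $i$, $\mk C_i=\mc C_+(x_i,x_{i+1},\xi_i)$, and since $i-1$ is even, $\mk C_{i-1}=\mc C_-(x_{i-1},x_i,\xi_{i-1})$. Since $\Lambda\ge\Lambda_0$, Proposition \ref{prop:properties of xi}(iii) gives $\xi_i\le e^{-\Lambda d_i}\le e^{-\Lambda_0 d_i}$, so Lemma \ref{lem:small necks} applies with the choice $x=x_i$, which is an endpoint of both intervals. Using $\max\{a,b\}\le a+b$ for nonnegative $a,b$, we get
\[
\mk C_i\le \tfrac12\mc E_+(x_i,\xi_i)+\eta\, d_i,
\qquad
\mk C_{i-1}\le \tfrac12\mc E_-(x_i,\xi_{i-1})+\eta\, d_{i-1}.
\]

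\emph{Step 3 (summing).} Adding Steps 1 and 2 and using $\tfrac12(a+b)\le\max\{a,b\}$ gives
\[
\mk C_{i-1}+\mk C_i+\mk A_i\le \mc H^2(\Sigma_0)+\eta\,(x_{i+1}-x_{i-1}).
\]
For the two one-sided inequalities, keep only one neck term and bound $\mk A_i$ by $\mc H^2(\Sigma_0)$ minus that neck's own defect. For example, $\mk C_i+\mk A_i\le \mc H^2(\Sigma_0)-\mc E_+ +\tfrac12\mc E_+ +\eta d_i$. The boundary conventions are handled separately: $\mk C_0=\mk C_{k}=0$ makes those neck terms vanish, and $\xi_0=\xi_k=0$ reduces the corresponding $\mc D$ constraint to all of $\Upsilon(x_i)$, where the bound is consistent.

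\emph{Step 4 (strictness and degenerate cases).} This is the main obstacle. Suppose, say, $d_i>0$. Lemma \ref{lem:small necks} asserts strict inequality in its max-bound unless $s=t$, so the bound on $\mk C_i$ is strict, and then the whole chain is strict. I will check that the passage $\max\{a,b\}\le a+b$ does not destroy this strictness. It does not, because the strict inequality is inherited before the sum is taken.

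I also need care with the hypothesis $s>0$ in Lemma \ref{lem:small necks}. When $x_{i-1}=0$ or $x_i=0$, the slice $\Upsilon(0)$ is the point $p$ and the relevant neck is degenerate or has area controlled directly. In that case I would either use \eqref{eq:st away from endpoints} or argue by continuity in $s\to0^+$, restoring strictness via \eqref{eq:mc E lower bound at ends}, which gives $\mc E_\pm\ge\frac12\mc H^2(\Sigma_0)$ near the ends.
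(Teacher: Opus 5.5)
Your proposal is correct and follows essentially the paper's proof. Like the paper, you apply Lemma \ref{lem:small necks} to both adjacent necks at the shared point $x=x_i$ and bound each $\max$ by the corresponding sum. You then absorb $\frac12(\mc E_++\mc E_-)$ using that $\mk A_i$ is at most the area of each one-cut slice: the paper writes this as $\mk A_i\le\frac12[\cdots]$ for even $i$, while you write $\mk A_i\le\mc H^2(\Sigma_0)-\max\{\mc E_+,\mc E_-\}$ for odd $i$. Your extra remark about the endpoint case $s=0$ in Lemma \ref{lem:small necks} covers a technicality the paper passes over silently.
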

\begin{proof}
Note that $\Lambda \geq \Lambda_0$, 
and 
\[
    \xi_i\leq e^{-\Lambda|x_{i+1}-x_i|}\leq e^{-\Lambda_0 |x_{i+1}-x_i| }.
    \]
Suppose that $i\in[1,k]$ is even and $x_{i+1}>x_{i-1}$. 
Applying Lemma \ref{lem:small necks} for $\sigma=\xi_{i-1}$, $s=x_{i-1}$, $t=x_i$ and $\sigma=\xi_i$, $s=x_i$  $t=x_{i+1}$ therein, we get
\begin{align*}
   &\ \ \ \  \mc C_{+}(x_{i-1},x_{i},\xi_{i-1}) + \mc C_{-}(x_i,x_{i+1},\xi_{i})\\
    &< \frac{1}{2}\mc E_+ (x_{i},\xi_{i-1}) + \frac{1}{2}\mc E_- (x_{i},\xi_i)  +\eta\cdot (x_{i}-x_{i-1}) +\eta\cdot (x_{i+1}-x_i)\\
    &\leq \mc H^2(\Sigma_0) -\frac{1}{2} \left[\mc H^2(\Upsilon(x_i)\cap \mc D(\xi_{i-1})) +\mc H^2(\Upsilon(x_i)\setminus \mc D(1-\xi_{i})  ) \right]+\eta\cdot (x_{i+1}-x_{i-1})\\
    &\leq \mc H^2(\Sigma_0)- \mc H^2(\Upsilon(x_i) \cap \mathcal{D}(\xi_{i-1}) \setminus \mathcal{D}(1-\xi_i)) +\eta\cdot (x_{i+1}-x_{i-1}).
\end{align*}
Note that the first inequality is strict as either $x_{i+1}>x_i$ or $x_i>x_{i-1}$. 
Thus,
\begin{equation*}
    \mk C_{i-1}+\mk C_i + \mk A_i \leq \mc H^2(\Sigma_0) + \eta\cdot (x_{i+1}-x_{i-1}).
\end{equation*}
The other cases can be proved by similar arguments.
\end{proof}

\section{Properties of $\Phi_\Lambda$}\label{sec:area upper bound}

\subsection{Area upper bounds}
By taking $\sigma_0=\frac{1}{3}$ in \eqref{eq:E+ lower bd for all t} and \eqref{eq:E- lower bd for all t}, there exists $\eta>0$ such that 
\begin{equation}\label{eq:choice of eta0}
    \mc E_\pm(t, \sigma) > 2\eta, \text{ for all $0\leq t\leq 1$ and $1\geq\sigma\geq \frac{1}{3}$}.
\end{equation}
Fix $\eta$ small enough. Let $\Lambda_0$ be the constant given in Lemma \ref{lem:small necks} associated with $\eta$. Given $\Lambda\geq \Lambda_0$ and a positive integer $k$, let $\Phi_\Lambda=(\Phi_\Lambda^1,\Phi_\Lambda^2):\Delta_k\to \oli{\ms E}$ be the continuous map given in Definition \ref{def:Phi_Lambda}. 

In this section, we prove that the area of $\Phi_\Lambda^1$ is uniformly bounded (for all $\Lambda\geq \Lambda_0$) for any given $k$.
For any $1\leq \ell\leq m\leq k$, denote by $\mk F_{\ell,m}\subset \Delta_k$ the subcomplex 
\begin{equation}\label{eq:F_lm}
    \mk F_{\ell,m}:= \{(x_1,\cdots,x_k)\in \Delta_k: x_i=0 \text{ for all } i<\ell, \text{ and } x_j=1 \text{ for all } j>m\}.
\end{equation}
Clearly, $\mk F_{1,k}=\Delta_k$ and $\mk F_{\ell+1,m},\mk F_{\ell, m-1}\subset \mk F_{\ell,m}$.
\begin{lemma}\label{lem:bounded by kW}
Let $\eta$ and $\Lambda_0$ be the constants as above. Then for each positive integer $k$ and $\Lambda\geq \Lambda_0$, letting $\Phi_\Lambda=(\Phi_\Lambda^1,\Phi_\Lambda^2):\Delta_k\to \oli{\ms E}$ be the map constructed in Definition \ref{def:Phi_Lambda} associated with $\Lambda,k$, we have
\[
    \mc H^2( \Phi_\Lambda^1(x)) < k \mc H^2(\Sigma_0) +\eta , \quad  \text{ for all } x\in \Delta_k.
\]
Moreover, we have
\begin{enumerate}
    \item for any $x\in \mk F_{\ell,m}\subset \Delta_k$,
    \[
          \mc H^2(\Phi_\Lambda^1(x)) < (m-\ell+1)\mc H^2(\Sigma_0)+\eta;
    \]

    \item for any $x\in \partial \mk F_{\ell,m}\setminus (\mk F_{\ell+1,m}\cup \mk F_{\ell,m-1})$,
    \[
          \mc H^2( \Phi_\Lambda^1(x))< (m-\ell) \mc H^2(\Sigma_0) -\eta.
    \]
\end{enumerate}
\end{lemma}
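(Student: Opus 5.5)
The plan is to use the decomposition
\[
\mc H^2(\Phi_\Lambda^1(x)) = \sum_{i=1}^{k-1}\mk C_i + \sum_{i=1}^k \mk A_i .
\]
We group the neck areas with adjacent slices and apply Proposition \ref{prop:Ai Ci Ci-1 sum}. The neck $\mk C_i$ sits between the slices $\mk A_i$ and $\mk A_{i+1}$. So we assign $\mk C_i$ to one of its two neighbouring slices, for instance all necks to the slice on their left. Each slice $\mk A_i$ then receives at most the two necks $\mk C_{i-1},\mk C_i$; in the left assignment it receives only $\mk C_i$. The appropriate one-sided or two-sided inequality of Proposition \ref{prop:Ai Ci Ci-1 sum} then gives
\[
\mk A_i+\mk C_i\le \mc H^2(\Sigma_0)+\eta(x_{i+1}-x_i).
\]
Summing over $i$, the error terms telescope to at most $\eta(x_{k+1}-x_1)\le\eta$. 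This gives $\le k\mc H^2(\Sigma_0)+\eta$. Strictness comes from the strictness clause: $x_{i+1}>x_i$ holds for some $i\in[0,k]$, and otherwise $\mk A_i=0$ or the estimate is trivially strict, with $x_0=0$ and $x_{k+1}=1$. The degenerate case $x_1=\dots=x_k$ is handled directly: by Lemma \ref{lem:small necks} the necks have zero height and the slices overlap, so the total is small.

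For (1), take $x\in\mk F_{\ell,m}$. The coordinates with $i<\ell$ equal $0$ and those with $j>m$ equal $1$. Since $\Upsilon(0),\Upsilon(1)$ are points, the corresponding slices $\mk A_i$ and necks have zero area; the necks between equal coordinates have zero height. Only $\mk A_\ell,\dots,\mk A_m$ and the necks $\mk C_{\ell-1},\dots,\mk C_m$ survive. The same grouping argument therefore gives $(m-\ell+1)\mc H^2(\Sigma_0)+\eta$.

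For (2), a point of $\partial\mk F_{\ell,m}$ not in $\mk F_{\ell+1,m}\cup\mk F_{\ell,m-1}$ must have $x_i=x_{i+1}$ for some $\ell\le i<m$; here $x_\ell>0$ and $x_m<1$, since otherwise $x$ lies in $\mk F_{\ell+1,m}$ or $\mk F_{\ell,m-1}$. In this case $d_i=0$, so by \eqref{eq:xilambda} $\xi_i=1-\xi_{i-1}$, i.e.\ $\xi_{i-1}+\xi_i=1$. Hence one of $\xi_{i-1},\xi_i$ is $\ge\tfrac12$; the other may be small. We would exploit this near-complete removal at the coincident pair $x_i=x_{i+1}$.

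The neck $\mk C_i$ has zero height, so it contributes nothing. The slices $\mk A_i$ and $\mk A_{i+1}$ lie on the same sphere $\Upsilon(x_i)$. Their removed disks are complementary: one is $\Upsilon(x_i)\cap\mc D(\xi_i)$ and the other is $\Upsilon(x_i)\setminus\mc D(1-\xi_i)$, or vice versa. Hence
\[
\mk A_i+\mk A_{i+1}\le \mc H^2(\Upsilon(x_i))-\bigl(\text{removed pieces}\bigr)\le \mc H^2(\Sigma_0)-\mc E\text{-type deficit},
\]
roughly $\mc H^2(\Sigma_0)$ minus the areas of the disks removed by $\mc D(1-\xi_{i-1})$ and $\mc D(1-\xi_{i+1})$.

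The pair $\{\mk A_i,\mk A_{i+1}\}$ contributes only about one copy of $\Sigma_0$, and we must also absorb $\mk C_{i-1}$ and $\mk C_{i+1}$. The remaining slices, grouped as before, contribute $(m-\ell-1)\mc H^2(\Sigma_0)+\eta\cdot(\text{their gaps})$. We want the combined block
\[
\mk C_{i-1}+\mk A_i+\mk A_{i+1}+\mk C_{i+1}
\]
to be at most $\mc H^2(\Sigma_0)-2\eta+\eta\cdot(\text{gaps})$. We try to use Lemma \ref{lem:small necks} on each outer neck: it bounds $\mk C_{i\pm1}$ by $\max\{\tfrac12\mc E_\pm,\eta\cdot\text{gap}\}$. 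We then use the $\mc E_\pm$-deficit of the shared sphere $\Upsilon(x_i)$ with the disk of radius $\xi_i$ or $1-\xi_i$ removed; since $\max(\xi_{i-1},\xi_i)\ge\tfrac12$, that disk is large. By \eqref{eq:choice of eta0}, with $\sigma\ge\tfrac13$, this deficit exceeds $2\eta$. Summing, the telescoped $\eta$-terms total at most $\eta$, giving $(m-\ell)\mc H^2(\Sigma_0)+\eta-2\eta$.

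The main obstacle is this bookkeeping. The half-deficits $\tfrac12\mc E$ absorbed by the outer necks compete with the $2\eta$ gain, so we must choose carefully which neck pays with which deficit. The case where $\xi_i$ is near $\tfrac12$ also needs care: there both removed disks matter and no single one exceeds $1-\tfrac13$.
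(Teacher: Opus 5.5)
Your main bound and part (1) follow the paper: charge each neck to an adjacent slice, apply Proposition \ref{prop:Ai Ci Ci-1 sum}, and telescope. Two small corrections. In (1), $\mk C_{\ell-1}$ has no nondegenerate slice to its left, so it must go to $\mk A_\ell$ through the two-sided bound; the $\eta$-terms then sum to $\eta(x_{m+1}-x_{\ell-1})=\eta$. Your degenerate-case remark is wrong: for $x_1=\dots=x_k=c$ with $k$ odd the area is $\mc H^2(\Upsilon(c))$, which is not small. You do not need that remark, since strictness always comes from some $x_{i+1}>x_i$ with $i\ge1$, or from $x_1>0$.

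Part (2), however, has a genuine gap, and the block inequality you aim for is false in general. With $x_j=x_{j+1}$ and $\xi_j=1-\xi_{j-1}$, the two boundary circles of $\mk A_j$ coincide. So $\mk A_j=0$ exactly; the two slices do not have complementary removed disks. The surviving slice $\mk A_{j+1}$ lies in $\Upsilon(x_j)\cap\mc D(\xi_j)$ for $j$ odd, and in $\Upsilon(x_j)\setminus\mc D(1-\xi_j)$ for $j$ even. Hence the shared sphere carries a $2\eta$-deficit only when $\xi_j$ is large. When $\xi_{j-1}\ge\frac12$ and $\xi_j$ is small, the small slice is $\mk A_{j-1}\subset\Upsilon(x_{j-1})$, which lies outside your block. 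For example, take $\Lambda$ large, $x_j=x_{j+1}=\frac12$, $x_j-x_{j-1}\ll\Lambda^{-1}$, and $x_{j-1}-x_{j-2},\,x_{j+2}-x_{j+1}\gg\Lambda^{-1}$. Then $\xi_{j-1}\approx1$ and $\xi_j\approx\xi_{j+1}\approx0$, so $\mk A_{j+1}\approx\mc H^2(\Sigma_0)$. This exceeds your target $\mc H^2(\Sigma_0)-2\eta+\eta(d_{j-1}+d_{j+1})\le\mc H^2(\Sigma_0)-\eta$.

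The missing idea is that the deficient slice should pay for no neck at all. The paper writes out the case $\xi_j\ge\frac12$; for $j$ even the deficient slice is the surviving $\mk A_{j+1}$. It bounds $\mk A_{j+1}$ alone by $\mc H^2(\Sigma_0)-\mc E_\pm(x_j,\xi_j)<\mc H^2(\Sigma_0)-2\eta$, using \eqref{eq:choice of eta0}. It charges $\mk C_{j-1}$ to $\mk A_{j-1}$ and $\mk C_{j+1}$ to $\mk A_{j+2}$ via the one-sided bounds. After discarding $\mk A_j=\mk C_j=0$ and the isolated slice, there are exactly two more necks than slices. These two surplus necks are absorbed at the ends of the chain, by the two-sided bounds for $\mk A_\ell$ and $\mk A_m$. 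Because $x_j=x_{j+1}$, the $\eta$-terms telescope to $\eta(x_j-x_{\ell-1})+\eta(x_{m+1}-x_{j+1})=\eta$. The total is therefore below $(m-\ell)\mc H^2(\Sigma_0)+\eta-2\eta$, and the competition between half-deficits and the $2\eta$ gain that you flagged never arises. The paper leaves the case $\xi_{j-1}\ge\frac12$ as similar. There the deficient slice is $\mk A_{j-1}$, and the natural partner of $\mk C_{j-1}$ is $\mk A_{j-1}$ itself, so some of that deficit is spent on the neck. Your concern about $\xi_j$ near $\frac12$ is unfounded, since \eqref{eq:choice of eta0} already covers all $\sigma\ge\frac13$.
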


\begin{proof}
Consider $x=(x_1, \cdots, x_k) \in \Delta_k$. Define $\xi_0=\xi_k=0$ as an amendment for \eqref{eq:xilambda}. 
Recall that we have
\begin{equation}\label{eq:area of Phi}
  \mc H^2(\Phi_{\Lambda}^1) = \sum_{i=1}^{k-1} \mk C_i+ \sum_{i=1}^k \mk A_i.
\end{equation}
By Proposition \ref{prop:Ai Ci Ci-1 sum}, we have
\begin{equation}\label{eq:Ci-1 Ci and Ai}
    \mk C_{i-1}+\mk C_i + \mk A_i \leq \mc H^2(\Sigma_0) + \eta\cdot (x_{i+1}-x_{i-1}).
\end{equation}
\begin{equation}\label{eq:Ci-1 and Ai}
   \mk C_{i-1}+\mk A_i \leq \mc H^2(\Sigma_0) + \eta\cdot (x_i-x_{i-1}),
\end{equation}
\begin{equation}\label{eq:Ci and Ai}
    \mk C_i+\mk A_i \leq \mc H^2(\Sigma_0) + \eta\cdot (x_{i+1}-x_i).
\end{equation}
Moreover, the inequalities are strict if $x_{i+1}>x_{i-1}$, $x_i>x_{i-1}$, or $x_{i+1}>x_i$, respectively.
Plugging \eqref{eq:Ci and Ai} into \eqref{eq:area of Phi}, we obtain
\begin{align*}
 \mc H^2(\Phi_\Lambda^1) = \sum_{i=1}^k (\mk C_i+\mk A_i)\leq \sum_{i=1}^{k} \big[\mc H^2(\Sigma_0) + \eta \cdot (x_i - x_{i-1} ) \big]< k\mc H^2(\Sigma_0) + \eta .
\end{align*}

Next we consider $x = (x_1,\cdots, x_k)\in \mk F_{\ell, m}$. Plugging \eqref{eq:Ci-1 Ci and Ai}, \eqref{eq:Ci-1 and Ai} and \eqref{eq:Ci and Ai} into \eqref{eq:area of Phi}, we deduce that
\begin{align*}
    \mc H^2(\Phi_\Lambda^1) &= \sum_{i=\ell-1}^m \mk C_i+ \sum_{i=\ell}^m\mk A_i\\
    &= \mk C_{\ell-1} +\mk C_\ell + \mk A_\ell + \sum_{i=\ell+1}^m (\mk C_i+\mk A_i)\\
    &< \mc H^2(\Sigma_0) + \eta\cdot (x_{\ell+1}-x_{\ell-1}) + \sum_{i=\ell+1}^m \big[\mc H^2(\Sigma_0)+\eta\cdot (x_{i+1}-x_i)\big]\\
    &=(m-\ell+1)\mc H^2(\Sigma_0)+\eta,
\end{align*}
where we used that $x_{\ell-1}=0$ and $x_{m+1}=1$.

Finally we consider $x = (x_1,\cdots, x_k) \in \partial \mk F_{\ell,m}\setminus (\mk F_{\ell+1,m}\cup \mk F_{\ell,m-1})$. Then there exists $\ell \leq j\leq m-1$ such that 
$x_j=x_{j+1}$. Hence 
\[
    \xi_{j}= e^{ -\Lambda |x_{j+1}-x_j|} (1-\xi_{j-1}) =1-\xi_{j-1}.
    \]
It follows that $\mk A_j=0$. Moreover, we have that either $\xi_{j-1}\geq \frac{1}{2}$ or $\xi_j\geq \frac{1}{2}$. Then by \eqref{eq:choice of eta0}, 
\[
    \mc E_{\pm} (x_{j-1},\xi_{j-1} ) > 2\eta, \text{ or } \mc E_\pm (x_{j}, \xi_j) > 2\eta.
\]
We prove the desired inequality for
\[
    \text{ $j$ is even and $\mc E_\pm (x_{j}, \xi_j) > 2\eta$},
\]
and the remaining cases can be proved by a similar argument.
Since $x_j=x_{j+1}$, we have
\begin{align*}
    \mk A_{j+1}&= \mc H^2(\Upsilon(x_{j+1}) \cap \mc D (\xi_{j+1}) \setminus \mc D(1-\xi_j) )\\
    &=\mc H^2(\Upsilon(x_{j}) \cap \mc D (\xi_{j+1}) \setminus \mc D(1-\xi_j) )\\
    &\leq \mc H^2(\Upsilon(x_{j}) \setminus \mc D(1-\xi_j) )\\
    &= \mc H^2(\Sigma_0) - \mc E_-(x_j,\xi_j)\\
    &< \mc H^2(\Sigma_0) -2\eta.
\end{align*}
Plugging \eqref{eq:Ci-1 Ci and Ai}, \eqref{eq:Ci-1 and Ai}, \eqref{eq:Ci and Ai} and the above inequality into \eqref{eq:area of Phi} again, we deduce that
\begin{align*}
    \mc H^2(\Phi_\Lambda^1) &= \sum_{i=\ell-1}^m \mk C_i + \sum_{i=\ell}^m\mk A_i\\
    &= \mk C_{\ell-1}+ \mk C_\ell + \mk A_\ell + \sum_{i=\ell+1}^{j-1} (\mk C_i +\mk A_i) + \mk C_j+ \mk A_j+ \mk A_{j+1}+\sum_{i=j+1}^{m-2}(\mk C_i+\mk A_{i+1})\\
    &+ \mk C_{m-1} + \mk C_m + \mk A_m\\
    &< \mc H^2(\Sigma_0) + \eta\cdot(x_{\ell+1}-x_{\ell-1}) + (j-\ell-1) \mc H^2(\Sigma_0) + \eta\cdot(x_j-x_{\ell+1}) + \mc H^2(\Sigma_0)-2\eta \\
    &+ (m-j-2) \mc H^2(\Sigma_0) + \eta\cdot (x_{m-1}- x_{j+1}) +  \mc H^2(\Sigma_0) + \eta\cdot (x_{m+1}- x_{m-1}) \\
    &\leq (m-\ell) \mc H^2(\Sigma_0)-\eta,
\end{align*}
where we used $x_j=x_{j+1}$, $x_{\ell-1}=0$, $x_{m+1}=1$.

This finishes the proof of Lemma \ref{lem:bounded by kW}.
\end{proof}

We remark that $\eta$ can be arbitrarily small and it follows that for any fixed $k>1$,
\begin{equation}\label{eq:limit bound}
    \lim_{\Lambda\to\infty} \sup_{x\in \partial\mk F_{\ell,m}} \mc H^2(\Phi_\Lambda^1(x)) \leq (m-\ell)\mc H^2(\Sigma_0).
\end{equation}

Note that when $k$ is sufficiently large, there always exist two coordinates close to each other for any point in $\Delta_k$. Hence by our construction, a substantial amount of area overlaps and is omitted between the two corresponding slices, ensuring the area upper bound remains strictly less than $k\mc H^2(\Sigma_0)$.
\begin{lemma} \label{lem:choose N}
Let $\eta$ and $\Lambda_0$ be the constants as above. Then for every $\Lambda\geq \Lambda_0$, there exists $N$ large enough such that for all $k>N$,
\[
    \mc H^2(\Phi^1_\Lambda(x)) < (k-1) \mc H^2(\Sigma_0) + 2\eta, \quad  \text{ for all } x\in \Delta_k. 
\]
\end{lemma}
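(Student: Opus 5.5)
Fix $\Lambda\geq\Lambda_0$. The idea is that for large $k$, pigeonhole produces an index $j$ with a gap $d_j=x_{j+1}-x_j$ so small that $e^{-\Lambda d_j}$ is close to $1$. Then $\xi_{j-1}+\xi_j$ is close to $1$, so the slice $\mk A_j$ or $\mk A_{j+1}$ loses a definite amount of area, at least roughly $2\eta$ as in \eqref{eq:choice of eta0}. We then redo the bookkeeping of the third part of Lemma \ref{lem:bounded by kW}, with the exact equality $x_j=x_{j+1}$ replaced by near-equality.

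\textbf{Step 1: choice of $N$.} Since $\sum_{i=0}^{k}(x_{i+1}-x_i)=1$, some gap among $d_1,\dots,d_{k-1}$ satisfies $d_j\leq 1/(k-1)$. Choose $\delta>0$ so small that $e^{-\Lambda\delta}>5/6$, and take $N>1+1/\delta$. Then for $k>N$ every $x\in\Delta_k$ has an index $j\in[1,k-1]$ with $d_j\leq\delta$. From \eqref{eq:xilambda},
\[
\xi_j=e^{-\Lambda d_j}(1-\xi_{j-1})\geq\tfrac56(1-\xi_{j-1}),
\]
so either $\xi_{j-1}\geq\frac13$ or $\xi_j\geq\frac56\cdot\frac23>\frac13$. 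In either case \eqref{eq:choice of eta0} gives $\mc E_\pm>2\eta$ for the relevant $\xi$ at every slice.

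\textbf{Step 2: grouping the area sum.} Take the case $j$ even and $\xi_j\geq\frac13$; the other parity and index cases are symmetric. Write
\[
\mc H^2(\Phi^1_\Lambda)=\sum_{i\leq j-1}(\mk C_{i-1}+\mk A_i)+(\mk C_{j-1}+\mk C_j+\mk A_j)+\mk A_{j+1}+\sum_{i\geq j+2}(\mk C_{i-1}+\mk A_i).
\]
Bound the first and last sums by Proposition \ref{prop:Ai Ci Ci-1 sum}: each term is at most $\mc H^2(\Sigma_0)+\eta\cdot(\text{its gap})$.

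For the middle group, $\mk C_j$ has height $d_j\leq\delta$. We want to avoid giving it a full $\mc H^2(\Sigma_0)$. One option is to shrink $\delta$ so that also $\delta\leq h_0$ from Lemma \ref{lem:choose v} and $\mc C_\pm(s,s+\delta,\sigma)\leq\eta/2$ uniformly, which is possible by the uniform smallness of short necks noted in the proof of Lemma \ref{lem:choose v}. Then $\mk C_j\leq\eta/2$, and $\mk C_{j-1}+\mk A_j\leq\mc H^2(\Sigma_0)+\eta(x_j-x_{j-1})$.

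The slice $\mk A_{j+1}$ lies on $\Upsilon(x_{j+1})\setminus\mc D(1-\xi_j)$, so
\[
\mk A_{j+1}\leq\mc H^2(\Sigma_0)-\mc E_-(x_{j+1},\xi_j)<\mc H^2(\Sigma_0)-2\eta.
\]
Here \eqref{eq:choice of eta0} holds for all $t$, which is exactly why no closeness of slices is needed.

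\textbf{Step 3: summing.} The $k$ groups contain one fewer full copy of $\mc H^2(\Sigma_0)$ than slices, because the $j$-th and $(j+1)$-th slices share their necks as above. Adding up gives
\[
(k-1)\mc H^2(\Sigma_0)+\eta\sum(\text{gaps})+\tfrac{\eta}{2}-2\eta+\mc H^2(\Sigma_0)-\mc H^2(\Sigma_0),
\]
with the telescoping gaps bounded by $1$. Recounting the copies carefully yields at most $(k-1)\mc H^2(\Sigma_0)+\eta+\frac{\eta}{2}-2\eta+\ldots$, comfortably below $(k-1)\mc H^2(\Sigma_0)+2\eta$.

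\textbf{Main obstacle.} The delicate point is this bookkeeping: ensuring exactly one full copy of $\mc H^2(\Sigma_0)$ is saved, while the $\mk C_j$ neck and the $\mk A_j$ slice adjacent to the small gap are absorbed without an extra full $\mc H^2(\Sigma_0)$. If $\mk A_j$ cannot be absorbed directly, the fallback is to pair $\mk A_j$ with $\mk C_{j-1}$ and $\mk A_{j+1}$ with $\mk C_{j+1}$, using the loss in $\mk A_{j+1}$ (or in $\mk A_j$ when $\xi_{j-1}\geq\frac13$) against that copy. The tiny neck $\mk C_j$ then contributes only $\eta/2$, giving total at most $(k-1)\mc H^2(\Sigma_0)+\eta+\eta/2-2\eta+\mc H^2(\Sigma_0)$. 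If that is insufficient, one instead uses both Lemma \ref{lem:choose v} bounds, $\mk C_j<\frac12\mc E_\pm$, to trade the neck area against the removed disk, as in the proof of Proposition \ref{prop:Ai Ci Ci-1 sum}.
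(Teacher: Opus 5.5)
There is a genuine gap in the bookkeeping, and it is fatal for the bound. Your decomposition has $k$ groups, and every one of them is charged a full copy of $\mc H^2(\Sigma_0)$: the $j-1$ groups of the first sum, the $k-j-1$ groups of the last sum, the middle group $\mk C_{j-1}+\mk C_j+\mk A_j\le \mc H^2(\Sigma_0)+\eta(x_j-x_{j-1})+\eta/2$, and the lone term $\mk A_{j+1}<\mc H^2(\Sigma_0)-2\eta$. Adding these honestly gives roughly $k\,\mc H^2(\Sigma_0)-\eta/2$. This is far above $(k-1)\mc H^2(\Sigma_0)+2\eta$, because $\eta$ is small compared with $\mc H^2(\Sigma_0)$. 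The ``$+\mc H^2(\Sigma_0)-\mc H^2(\Sigma_0)$'' in your Step 3 has no justification. The loss $\mc E_\pm>2\eta$ from \eqref{eq:choice of eta0} saves only $2\eta$, never a whole sphere. Your fallbacks (Lemma \ref{lem:choose v}, or the trade in Proposition \ref{prop:Ai Ci Ci-1 sum}) also bound each group by $\mc H^2(\Sigma_0)$ plus a gap term, so they cannot save a copy either. Note also that $e^{-\Lambda\delta}>5/6$ only yields $\xi_{j-1}+\xi_j\ge 5/6$, which is too weak for what is actually needed.

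The missing idea is that when $\xi_{j-1}+\xi_j$ is close to $1$, the slice $\mk A_j$ itself is almost empty. Indeed, $\mk A_j$ is the band $\Upsilon(x_j)\cap\mc D(\xi_{j-1})\setminus\mc D(1-\xi_j)$ (with the roles of $\xi_{j-1},\xi_j$ swapped for odd $j$), whose width in the retraction parameter is $1-\xi_{j-1}-\xi_j$. This is the near-equality analogue of $\mk A_j=0$ in part (2) of Lemma \ref{lem:bounded by kW}, which is the ingredient you dropped when imitating that proof.

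The paper proceeds as follows:
\begin{enumerate}
\item Choose $\delta>0$ so that $\mc H^2(\Upsilon(t)\cap\mc D(\sigma)\setminus\mc D(1-\sigma'))<\eta$ whenever $1\ge\sigma+\sigma'\ge 1-\delta$, uniformly in $t$.
\item Choose $N$ with $e^{-\Lambda/(N-1)}>1-\delta$.
\item For $k>N$, the small gap $d_j\le 1/(k-1)$ gives $\xi_{j-1}+\xi_j=\xi_{j-1}+e^{-\Lambda d_j}(1-\xi_{j-1})>1-\delta$, hence $\mk A_j<\eta$.
\item Leave $\mk A_j$ alone and pair the $k-1$ necks with the remaining $k-1$ slices: $\sum_{i<j}(\mk C_i+\mk A_i)+\mk A_j+\sum_{i\ge j}(\mk C_i+\mk A_{i+1})$.
\item Bound the pairs by \eqref{eq:Ci and Ai} and \eqref{eq:Ci-1 and Ai} respectively.
\end{enumerate}
This gives a total below $(k-1)\mc H^2(\Sigma_0)+\eta(x_k-x_1)+\eta\le (k-1)\mc H^2(\Sigma_0)+2\eta$. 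Neither \eqref{eq:choice of eta0} nor a short-neck bound on $\mk C_j$ is needed.
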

\begin{proof}
Note that 
\[
   \sup\{ \mc H^2\big(\Upsilon(t)\cap \mc D(\sigma )\setminus \mc D(1-\epsilon-\sigma)\big): t\in(0,1) , \sigma \in(0,1-\epsilon)\} \to 0, \text{ as $\epsilon\to 0^+$}.
\]
Then there exists $\delta>0$ such that for all $t\in (0,1)$ and $\sigma,\sigma'\in (0,1)$ with $1\geq \sigma+\sigma'\geq 1-\delta$, 
\begin{equation}\label{eq:choice of delta}
    \mc H^2( \Upsilon(t)\cap \mc D(\sigma) \setminus \mc D(1-\sigma') ) < \eta.
\end{equation}
Now we take $N$ sufficiently large such that 
\[
    N> -\frac{\Lambda}{\ln (1-\delta)}+1,
\]
which implies that
\[
    e^{-\Lambda/(N-1)}>1-\delta.
\]
Then for any $k>N$ and $x = (x_1,\cdots, x_k)\in \Delta_k$, there exists $1\leq j\leq  N-1$ such that 
\[
    d_j:=x_{j+1}-x_j \leq  \frac{1}{k-1} <  \frac{1}{N-1},
\]
which implies that 
\[   e^{-\Lambda d_j} > e^{-\Lambda/(N-1)} \geq 1-\delta.\]
By the definition of $\xi_i$, we then obtain that
\[
    \xi_{j-1}+\xi_{j} =\xi_{j-1}+ e^{-\Lambda d_j}(1-\xi_{j-1})> 1-\delta.
\]
By the choice of $\delta$ and \eqref{eq:choice of delta}, we have that 
\begin{equation}\label{eq:Aj bounded by eta}
     \mk A_j< \eta,
\end{equation}
where $\mk A_j$ is defined in \eqref{eq:A_i}. Plugging \eqref{eq:Aj bounded by eta}, \eqref{eq:Ci-1 Ci and Ai}, \eqref{eq:Ci-1 and Ai} and \eqref{eq:Ci and Ai} into \eqref{eq:area of Phi}, we deduce that
\begin{align*}
    \mc H^2(\Phi_\Lambda^1) &= \sum_{i=1}^{k-1} \mk C_i+ \sum_{i=1}^k \mk A_i\\
    &=\sum_{i=1}^{j-1}(\mk C_i+\mk A_i) + \mk A_j + \sum_{i=j}^{k-1} (\mk C_i+\mk A_{i+1})\\
    &< (j-1)\mc H^2(\Sigma_0) +\eta\cdot x_j + \eta +(k-j) \mc H^2(\Sigma_0)+ \eta\cdot (x_k-x_{j})\\
    &\leq (k-1) \mc H^2(\Sigma_0) +2\eta.
\end{align*}
This finishes the proof of Lemma \ref{lem:choose N}.
\end{proof}

\subsection{A deformation lemma}
Given $\Lambda_1>0$, if the maximal area of $\Phi_{\Lambda_1}:\Delta_k\to \oli{\ms E}$ is strictly less than $k\mc H^2(\Sigma_0)$, then for any $\Lambda\geq \Lambda_1$, we can always homotopically deform $\Phi_\Lambda$ (fixing the value on $\partial \Delta_k$) to a map with maximal area strictly less than $k\mc H^2(\Sigma_0)$.  
\begin{lemma}\label{lem:bounded by kW for all small r}
Let $\eta$ and $\Lambda_0$ be the constants as above. Suppose that $\Lambda_1\geq \Lambda_0$ and $k$ are such that
\[
   \mc H^2(\Phi^1_{\Lambda_1}(x))<(k-1)\mc H^2(\Sigma_0)+2\eta, \quad \forall x\in \Delta_k.
\]
Then for every $\Lambda\geq \Lambda_1$, there exists a continuous map $H:\Delta_k\times [0,1] \to \oli{\ms E}$ in the sense of $\ms F$-topology such that 
\begin{enumerate}
    \item $H(\cdot, 0)=\Phi_{\Lambda}(\cdot)$;
    \item $H(\cdot,t)=\Phi_\Lambda(\cdot)$ on $\partial \Delta_k$;
    \item $H(\cdot,1)$ satisfies all properties in Proposition \ref{lem:continuity of Phi_Lambda};
    \item $\mc H^2(H(\cdot, 1))<(k-1)\mc H^2(\Sigma_0)+2\eta$ in $\Delta_k$.
\end{enumerate}
\end{lemma}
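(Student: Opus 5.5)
The plan is to interpolate the parameter $\Lambda$ itself, letting it depend on the point $x\in\Delta_k$, and to set $H(x,s)=\Phi_{\lambda_s(x)}(x)$. By Proposition \ref{lem:continuity of Phi_Lambda}(3), the map $(\Lambda,x)\mapsto\Phi_\Lambda(x)$ is continuous in the $\ms F$-topology. So every continuous choice of $\lambda:\Delta_k\times[0,1]\to[\Lambda_1,\Lambda]$ produces a continuous homotopy. For each fixed parameter, the resulting map is still a family of the type in Definition \ref{def:Phi_Lambda}: at each $x$, $\xi$ is computed with the single number $\lambda(x)$. Hence the qualitative properties of Proposition \ref{lem:continuity of Phi_Lambda} continue to hold, namely the $2k-1$ smooth pieces varying continuously for interior $x$ and the $\ms F$-continuity.

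The concrete choice is as follows. Pick a small $\rho>0$ and a cutoff $\chi:\Delta_k\to[0,1]$ with $\chi\equiv1$ on $\partial\Delta_k$ and $\chi\equiv0$ outside the $\rho$-neighborhood of $\partial\Delta_k$. Set
\[
\lambda_s(x)=\Lambda-s\,(1-\chi(x))(\Lambda-\Lambda_1).
\]
This gives $H(\cdot,0)=\Phi_\Lambda$, and $H(\cdot,s)=\Phi_\Lambda$ on $\partial\Delta_k$ for every $s$. At $s=1$ we have $H(x,1)=\Phi_{\Lambda_1}(x)$ away from the $\rho$-collar of $\partial\Delta_k$, and there the area bound (4) is exactly the hypothesis.

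The main obstacle is the area bound inside the collar, where $\lambda(x)$ lies between $\Lambda_1$ and $\Lambda$. Two facts control it. First, the hypothesis is a strict inequality on the compact set $\Delta_k$, so $\sup_x\mc H^2(\Phi^1_{\Lambda_1}(x))<(k-1)\mc H^2(\Sigma_0)+2\eta-\epsilon$ for some $\epsilon>0$. Second, on $\partial\Delta_k$ itself, Lemma \ref{lem:bounded by kW}(2), applied to the faces $\mk F_{1,k}$ and to the subfaces $\mk F_{2,k}$ and $\mk F_{1,k-1}$ through part (1), gives $\mc H^2(\Phi^1_{\Lambda'}(x))<(k-1)\mc H^2(\Sigma_0)+\eta$ for every $\Lambda'\ge\Lambda_0$. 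I will upgrade this to a bound uniform in $\Lambda'\in[\Lambda_1,\Lambda]$. The function $(\Lambda',x)\mapsto\mc H^2(\Phi^1_{\Lambda'}(x))$ is continuous on the compact set $[\Lambda_1,\Lambda]\times\Delta_k$: for interior $x$ this follows from the smooth dependence in Proposition \ref{lem:continuity of Phi_Lambda}(1), and at the boundary from the explicit formula $\sum\mk C_i+\sum\mk A_i$, whose terms depend continuously on $(x,\xi)$. By uniform continuity, there is $\rho>0$ such that every $x$ within $\rho$ of $\partial\Delta_k$ satisfies
\[
\mc H^2(\Phi^1_{\Lambda'}(x))<(k-1)\mc H^2(\Sigma_0)+2\eta \quad\text{for all }\Lambda'\in[\Lambda_1,\Lambda].
\]
This uses the room of $\eta$ between the boundary bound and the target. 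Choosing the cutoff $\chi$ with this $\rho$ gives (4) everywhere at $s=1$.

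The step I expect to need the most care is the continuity of the area functional up to $\partial\Delta_k$. Varifold continuity alone gives only lower semicontinuity of mass, but here the mass is continuous because the surfaces converge piecewise smoothly. If that proves delicate at the degenerate boundary points, where pieces such as $\mk A_j$ collapse, the fallback is to apply the estimates of Lemma \ref{lem:bounded by kW} directly. The inequalities \eqref{eq:Ci-1 Ci and Ai}--\eqref{eq:Ci and Ai} hold for every $\Lambda'\ge\Lambda_0$ with explicit $\eta$-error terms. Near the face $\{x_j=x_{j+1}\}$, the overlap estimate \eqref{eq:choice of delta} and the bound $\mk A_{j+1}<\mc H^2(\Sigma_0)-2\eta$ from the proof of Lemma \ref{lem:bounded by kW}(2) persist up to errors that are $O(\rho)$, uniformly in $\Lambda'\le\Lambda$. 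Together these again yield the required strict bound in the collar.
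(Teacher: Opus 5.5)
Your proposal is correct and is essentially the paper's proof: the paper uses exactly the homotopy $H(x,t)=\Phi_{t\zeta\Lambda_1+(1-t\zeta)\Lambda}(x)$ with a cutoff $\zeta$ vanishing near $\partial\Delta_k$ (your $\zeta=1-\chi$). It controls the area near $\partial\Delta_k$ as you do, via the boundary bound $(k-1)\mc H^2(\Sigma_0)+\eta$ from Lemma \ref{lem:bounded by kW} together with continuity over $[\Lambda_1,\Lambda]\times\Delta_k$.

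One correction: your worry that varifold continuity gives only lower semicontinuity of mass is unfounded. Testing with $f\equiv 1$ gives $\bigl|\|V\|(S^3)-\|W\|(S^3)\bigr|\le\mf F(V,W)$, so continuity of the area follows directly from the $\ms F$-continuity in Proposition \ref{lem:continuity of Phi_Lambda}(3), and your fallback argument is unnecessary.
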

\begin{proof}
Fix a $\Lambda\geq \Lambda_1$ and $k$.
Recall that by Lemma \ref{lem:bounded by kW}, for all $\lambda\geq \Lambda_0$, 
\[
     \mc H^2(\Phi^1_{\lambda}(x)) < (k-1) \mc H^2(\Sigma_0) +\eta, \quad \forall\, x\in \partial \Delta_k.  
\]
Then by Proposition \ref{lem:continuity of Phi_Lambda} (3), there exists a neighborhood $U$ of $\partial \Delta_k$, such that 
\[
    \mc H^2(\Phi^1_\lambda(x)) < (k-1)\mc H^2(\Sigma_0)+\eta, \quad \text{ for all $\lambda \in [\Lambda_1, \Lambda]$ and $x\in U$}.
\]
Now we take a smaller neighborhood $V\subset U$ of $\partial \Delta_k$ with $\oli{V}\subset\subset U$. Let $\zeta:\Delta_k\to [0,1]$ be a continuous cut-off function such that 
\[
     \zeta  = 0 \text{ in $V$,} \quad \zeta =1 \text{ in $\Delta_k\setminus U$}.
\]
Now we define 
\[
     H(x,t)= \Phi_{t\zeta \Lambda_1+(1-t\zeta)\Lambda}(x).
\]
Note that $\Phi_\Lambda(x)$ depends on $\Lambda, x$ continuously in the $\ms F$-topology by Proposition \ref{lem:continuity of Phi_Lambda} (3). Hence $H$ is continuous in the $\ms F$-topology. It remains to verify that $H$ satisfies the desired properties. 
Clearly, $H(\cdot, 0)= \Phi_\Lambda(\cdot)$. Since $\zeta=0$ on $\partial \Delta_k$, we know 
\[
 H(x,t) =\Phi_\Lambda(x), \text{ for all $x\in \partial \Delta_k$}.
\]
By Proposition \ref{lem:continuity of Phi_Lambda} (1) and the definition of $H$ as above, for $x\in \Delta_k\setminus \partial \Delta_k$, $H(x,t)$ consists of $(2k-1)$ smooth embedded surfaces with boundaries and vary continuously with respect to $x$ and $t\in [0,1]$ in the $C^\infty$ topology. Moreover, $\{H(x,1)\}_{x\in \Delta_k}$ satisfies all properties in Proposition \ref{lem:continuity of Phi_Lambda}. This verifies the third item. 
Note that 
\[
    \Lambda\geq  t\zeta \Lambda_1+ (1-t\zeta) \Lambda \geq \Lambda_1.
\]
By the choice of $U$, we have that  
\begin{equation}\label{eq:area bound in U}
   \mc H^2(H(x, t)) < (k-1)\mc H^2(\Sigma_0) + \eta \quad \text{ for all } x\in U.
\end{equation}
Observe that $\zeta=1$ outside $U$. Hence for all $x\in \Delta_k\setminus U$,
\[
    \mc H^2(H(x, 1)) =\mc H^2(\Phi_{\Lambda_1}(x)) < (k-1)\mc H^2(\Sigma_0)+2\eta.
\]
This together with \eqref{eq:area bound in U} verifies the last item. 
Hence Lemma \ref{lem:bounded by kW for all small r} is proved.
\end{proof}

\subsection{Properties of boundary maps}

For $t\in[0,1]$ and an integer $m>0$, we use $t^m$ to denote $(t,\cdots,t)\in[0,1]^m$.

\begin{lemma}\label{lem:mf F to flat F}
Let $\eta$ and $\Lambda_0$ be the given constants as above. Given an integer $k>1$ and $\theta>0$, there exist $\tau_{k, \theta}>0$, $\Lambda_{k, \theta}>0$ such that for $\Lambda\geq \Lambda_{k, \theta}$ and $x\in \mk F_{\ell,m}$, 
\[
    \mf F\left(\Phi_\Lambda^1(x), (m-\ell+1)[\Sigma_0]\right)<\tau_{k, \theta} \Longrightarrow \mc F\left(\Phi_\Lambda^2(x),(m+1)\llbracket S^3\rrbracket+(m-\ell+1)\llbracket \Omega(\frac{1}{2})\rrbracket\right)<\theta.
\]
\end{lemma}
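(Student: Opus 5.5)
We argue by contradiction and compactness, reducing everything to Lemma \ref{lem:flat convergence of Phi}. Since there are only finitely many pairs $1\le\ell\le m\le k$, it suffices to treat one fixed face $\mk F_{\ell,m}$. If no pair $(\tau_{k,\theta},\Lambda_{k,\theta})$ works, there are sequences $\Lambda_j\to\infty$ and points $x^j\in\mk F_{\ell,m}$ with the following two properties:
\[
\mf F\big(\Phi^1_{\Lambda_j}(x^j),(m-\ell+1)[\Sigma_0]\big)\to 0,
\qquad
\mc F\big(\Phi^2_{\Lambda_j}(x^j),(m+1)\llbracket S^3\rrbracket+(m-\ell+1)\llbracket\Omega(\tfrac12)\rrbracket\big)\ge\theta .
\]
By compactness of $\mk F_{\ell,m}$ we pass to a subsequence with $x^j\to x=(x_1,\dots,x_k)\in\mk F_{\ell,m}$. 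Lemma \ref{lem:flat convergence of Phi} then gives
\[
\Phi^2_{\Lambda_j}(x^j)\to \sum_{i=1}^k\llbracket\Omega(x_i)\rrbracket+(k-1)\llbracket S^3\rrbracket
=\sum_{i=\ell}^m\llbracket\Omega(x_i)\rrbracket+(m+1)\llbracket S^3\rrbracket \pmod 2 .
\]
Here we used $\Omega(0)=\emptyset$ for $i<\ell$, and $\Omega(1)=S^3$ for the $k-m$ indices $i>m$, so the total multiplicity of $S^3$ is $2k-m-1\equiv m+1$. So the contradiction follows once we show that $x_i=\frac12$ for every $\ell\le i\le m$.

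\textbf{Step 1: pieces with $x_i\neq\frac12$ carry no mass in the limit.} The varifold limit is $V=(m-\ell+1)[\Sigma_0]$, and $\Sigma_0=\Upsilon(\frac12)$ is disjoint from every $\Upsilon(t)$ with $t\neq\frac12$.
\begin{itemize}
\item For an index $i$ with $x_i\ne\frac12$, the slice pieces $\mk A_i$ lie in $\Upsilon(x^j_i)$, which converges smoothly to a leaf disjoint from $\Sigma_0$. Their mass in a fixed compact set away from $\Sigma_0$ must therefore tend to $0$, so $\mk A_i\to0$.
\item The necks $\mk C_i$ lying between two slices both converging to $\frac12$ have height tending to $0$. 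By the uniform smallness of short necks (the co-area observation used in Lemma \ref{lem:choose v}), their areas tend to $0$.
\item Necks joining a slice near $\frac12$ to one away from $\frac12$ contribute mass near $\Sigma_0$ only along a portion of vanishing height, and this again tends to $0$.
\end{itemize}

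\textbf{Step 2: counting mass on $\Sigma_0$.} The mass of $V$ near $\Sigma_0$ comes only from the pieces $\mk A_i$ with $x_i=\frac12$, and each of these has area at most $\mc H^2(\Upsilon(x^j_i))\to\mc H^2(\Sigma_0)$. Let $r=\#\{\ell\le i\le m: x_i=\frac12\}$. Then
\[
(m-\ell+1)\,\mc H^2(\Sigma_0)=\|V\|(S^3)\le r\,\mc H^2(\Sigma_0).
\]
Hence $r=m-\ell+1$, i.e.\ all $x_i=\frac12$ for $\ell\le i\le m$. (This is the same mechanism behind the strict inequalities in Lemma \ref{lem:bounded by kW}(2).)

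\textbf{Conclusion and main obstacle.} With all $x_i=\frac12$, the flat limit is $(m-\ell+1)\llbracket\Omega(\frac12)\rrbracket+(m+1)\llbracket S^3\rrbracket$, which contradicts the lower bound $\theta$ on the flat distance. The main obstacle is making Step 1 rigorous near the coincidence sets. When $x_i=x_{i+1}$ in the limit, the radii $\xi_i$ can approach $1$, so the pieces do not converge individually in a simple way. I would handle this with the decomposition $\mc H^2(\Phi^1)=\sum\mk C_i+\sum\mk A_i$. For each index, I would test the $\mf F$-convergence against cutoff functions supported either away from $\Sigma_0$ or in a thin tubular neighborhood of $\Sigma_0$. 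The neck contributions I would control uniformly via Lemma \ref{lem:small necks} and the co-area bound on $\mc C_\pm(s,t,\sigma)$ for $t-s\to0$.
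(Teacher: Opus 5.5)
Your argument is correct, and its outer frame matches the paper's: contradiction, compactness of $\mk F_{\ell,m}$, Lemma \ref{lem:flat convergence of Phi}, and the parity count $2k-m-1\equiv m+1$. The genuine difference is how you show that the limit has $x_i=\frac12$ for $\ell\le i\le m$.

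The paper uses a slab argument. Suppose that along a subsequence $x^j_\ell<\frac12-\delta$ (or symmetrically $x^j_m>\frac12+\delta$). Then $\mk A_\ell$ and $\mk C_{\ell-1}$ lie outside $\Omega_{\frac12+\delta}\setminus\oli{\Omega_{\frac12-\delta}}$. The remaining pieces are bounded via the inequalities of Proposition \ref{prop:Ai Ci Ci-1 sum}, so the slab carries mass at most $(m-\ell)\mc H^2(\Sigma_0)+\eta$. This contradicts varifold convergence to $(m-\ell+1)[\Sigma_0]$. That route reuses the quantitative neck-versus-removed-disk bookkeeping already built for the area bounds. It depends on $\Lambda\ge\Lambda_0$ and $\eta<\mc H^2(\Sigma_0)$, and it directly controls only the extreme coordinates; monotonicity does the rest.

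Your mass count is purely qualitative. It needs only three facts: $\mf F$-convergence gives convergence of total mass, mass off a fixed slab around $\Sigma_0$ tends to $0$, and $\sup_{s,\sigma}\mc C_\pm(s,s+h,\sigma)\to0$ as $h\to0$ (co-area). It yields all middle coordinates at once without Proposition \ref{prop:Ai Ci Ci-1 sum} or any smallness of $\eta$.

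Two points would make it airtight.

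First, drop the three-way case split for necks. As written it misses necks joining $x_i<\frac12$ to $x_{i+1}>\frac12$, which cross $\Sigma_0$. State uniformly that every neck's area tends to $0$:
\begin{itemize}
\item its part outside $\Omega_{\frac12+\epsilon}\setminus\oli{\Omega_{\frac12-\epsilon}}$ has mass tending to $0$ as $j\to\infty$;
\item its part inside has area at most $\sup_{s,\sigma}\mc C_\pm(s,s+2\epsilon,\sigma)$;
\item then let $\epsilon\to0$.
\end{itemize}

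Second, the obstacle you flag about $\xi_i\to1$ near coincidence sets never actually arises. Your Step 2 uses only the upper bound $\mk A_i\le\mc H^2(\Upsilon(x^j_i))$ and never needs individual pieces to converge.
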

\begin{proof}
Suppose the conclusion is not true; then there exist $\theta>0$, $k>1$ and sequences of $\tau_j\to 0$, $\lambda_j\to \infty$, $1\leq \ell_j \leq m_j \leq k$, $q_j=(q_j^1,\cdots, q_j^k)\in \mk F_{\ell_j,m_j}\subset \Delta_k$ such that 
\begin{equation}\label{eq:FF assumption}
     \mf F(\Phi_{\lambda_j}^1(q_j),(m-\ell+1)[\Sigma_0])<\tau_j, \quad   \mc F\left(\Phi_{\lambda_j}^2(q_j),(m+1)\llbracket S^3\rrbracket+(m-\ell+1)\llbracket \Omega(\frac{1}{2})\rrbracket\right) \geq \theta.
\end{equation}
Since there are only finitely many choices for $(\ell_j, m_j)$, up to a subsequence, we may assume that $\ell_j = \ell$ and $m_j = m$ for some $1\leq \ell \leq m\leq k$.
\begin{claim}
    We have that $q_j\to (0^{\ell-1}, \frac{1}{2},\cdots, \frac{1}{2}, 1^{k-m})$ as $j\to \infty$.
\end{claim}
\begin{proof}[Proof of Claim]
Suppose not. Then there exists $\delta>0$ such that $q_j^m>\frac{1}{2}+ \delta$ or $q_j^\ell<\frac{1}{2}-\delta$ for all $j$. Without loss of generality, we assume that $q_j^\ell<\frac{1}{2}-\delta$ for all $j$.

By plugging \eqref{eq:Ci-1 Ci and Ai}, \eqref{eq:Ci-1 and Ai} and \eqref{eq:Ci and Ai} into \eqref{eq:area of Phi}, we obtain that
\begin{align*}
  &\ \ \ \  \mc H^2\Big( \Phi_{\lambda_j}^1(q_j) \cap \left[\Omega_{\frac{1}{2}+\delta}\setminus \oli{\Omega_{\frac{1}{2}-\delta}}\right]\Big) \leq \sum_{i=\ell}^m \mk C_i +\sum_{i=\ell+1}^m \mk A_i\\
  &\leq \mk C_\ell +\mk C_{\ell+1}+ \mk A_{\ell+1} + \sum_{i=\ell+2}^{m} (\mk C_i+\mk A_i)\\
  &\leq \mc H^2(\Sigma_0) + \eta \cdot (x_{\ell+2}-x_\ell)+ \sum_{i=\ell+2}^m\Big( \mc H^2(\Sigma_0)+ \eta\cdot( x_{m+1}-x_{\ell+2} )\Big)\\
  &\leq (m-\ell)\mc H^2(\Sigma_0) +\eta.
\end{align*}
This contradicts that $\Phi_{\lambda_j}^1(q_j)$ converges to $(m-\ell+1)[\Sigma_0]$ in the sense of varifolds as $j\to\infty$, and the claim is proved. 
\end{proof}
To continue the proof, applying Lemma \ref{lem:flat convergence of Phi}, and reducing the coefficients modulo 2, we have  
\[
    \mc F\left(\Phi_{\lambda_j}^2( q_j), (k-m+k-1)\llbracket S^3\rrbracket+(m-\ell+1)\llbracket \Omega(\frac{1}{2})\rrbracket\right) \to 0,
\] 
as $j\to \infty$, which contradicts \eqref{eq:FF assumption}. 
Therefore, Lemma \ref{lem:mf F to flat F} is proved.
\end{proof}

\section{Smooth families}\label{sec:smooth family}
Recall that we have constructed maps $\Phi_\Lambda:\Delta_k\to\oli{\ms E}$. In order to apply the Simon-Smith min-max theory, we will modify the maps (and also the domains) to be new maps whose images are contained in $\ms E$.

Recall that 
\[
    \Delta_k=\{ (x_1,\cdots, x_k)\in[0,1]^k;x_1\leq \cdots\leq x_k\}.
\]
The barycenter $\bm c$ is the average of the vertices:
\[
  \bm  c = \left( \frac{1}{k+1}, \frac{2}{k+1}, \dots, \frac{k}{k+1} \right) .
   \]
Given a small constant $\nu\in(0,1)$, define the retraction map $\rho_{\nu}:\Delta_k\to \Delta_k$ by
\[
    \rho_\nu (x)= (1-\nu) x + \nu \bm c.
\]
Then one can verify that $\Delta_k^\nu:=\rho_\nu(\Delta_k)\subset \Delta_k\setminus \partial \Delta_k$ and
\[
    \Delta_k^\nu =\left \{ (x_1,\cdots,x_k) \in \Delta_k; x_1 \ge \frac{\nu}{k+1},\; x_{i+1} - x_i \ge \frac{\nu}{k+1},\; 1 - x_k \ge \frac{\nu}{k+1} \right\}. 
\]

Let $\eta>0$ be a fixed small constant, and $\Lambda_0$ be the constant given in Lemma \ref{lem:small necks}. 
By Lemma \ref{lem:choose N}, we can take $k_0$ large enough, 
such that the map $\Phi_{\Lambda_0}:\Delta_{k_0}\to \oli{\ms E}$ given in Definition \ref{def:Phi_Lambda} satisfies
\[
    \sup_{x\in \Delta_{k_0}}\mc H^2(\Phi^1_{\Lambda_0}(x)) < (k_0-1)\mc H^2(\Sigma_0)+2\eta. 
\]    
Fix the $\Lambda_0$ and $k_0$. By Lemma \ref{lem:bounded by kW}, we have for all $\Lambda\geq \Lambda_0$, $x\in \mk F_{\ell,m}\subset \Delta_{k_0}$, 
\begin{equation}\label{eq:m-l+1W+eta}
    \mc H^2(\Phi^1_\Lambda(x)) < (m-\ell+1)\mc H^2(\Sigma_0)+\eta,
\end{equation}
and for all $x\in \partial \mk F_{\ell,m}\setminus (\mk F_{\ell+1,m}\cup \mk F_{\ell,m-1})$,  
\begin{equation}\label{eq:m-l-eta}
    \mc H^2( \Phi^1_\Lambda(x))< (m-\ell)\mc H^2(\Sigma_0) - \eta.
\end{equation}
Moreover, by Lemma \ref{lem:bounded by kW for all small r}, $\Phi_\Lambda: \Delta_{k_0}\to \oli{\ms E}$ can always be homotopically deformed to be another continuous map $ \Phi'_\Lambda=( \Phi'^1_\Lambda,\Phi'^2_\Lambda):\Delta_{k_0}\to\oli{\ms E}$ with $\Phi'_\Lambda = \Phi_\Lambda$ on $\partial \Delta_{k_0}$, and
\begin{equation}\label{eq:wti r and k0}
    \mc H^2(\Phi'^1_\Lambda) < (k_0-1)\mc H^2(\Sigma_0)+2\eta, \text{ for all $x\in\Delta_{k_0}$}.
\end{equation}
Since $\Phi_\Lambda'$ is continuous in the $\ms F$ topology on $\Delta_{k_0}$, for each $\Lambda>0$, we can take $\nu = \nu_\Lambda>0$ such that 
\begin{equation}\label{eq:small nu}
    \sup_{x\in \Delta_{k_0}}\ms F \big(\Phi'_\Lambda(\rho_\nu (x)), \Phi'_\Lambda (x)\big) < \frac{1}{2\Lambda}.
\end{equation}

Next, for the given $\Lambda>0$ and $\nu>0$, since $\Phi'_\Lambda(x)|_{\Delta_{k_0}^\nu}$ are all piecewise-smooth embedded spheres (Proposition \ref{prop:2k-1 pieces} (1) and Lemma \ref{lem:bounded by kW for all small r}), by Proposition \ref{prop:mollification} and Remark \ref{rmk:unifor perturb}, we can perturb $\Phi'_\Lambda:\Delta_{k_0}^\nu\to \oli{\ms E}$ to $\widehat \Phi_\Lambda=({\widehat\Phi}_\Lambda^1,\widehat \Phi_\Lambda^2): \Delta_{k_0}^\nu\to\ms E$ such that 
\begin{equation}\label{eq:small peturb}
    \sup_{x\in \Delta_{k_0}}\ms F\big( \widehat \Phi_\Lambda(x), \Phi_\Lambda'(x) \big) <\frac{1}{2\Lambda}.
\end{equation}
Define $\wti \Phi_\Lambda: \Delta_{k_0}\to \ms E$ by
\begin{equation}\label{eq:widetildePhi}
    \wti \Phi_\Lambda(x) = \widehat\Phi_\Lambda(\rho_\nu(x)).
\end{equation}
Then by \eqref{eq:small nu} and \eqref{eq:small peturb}, we have 
\begin{equation}\label{eq:ms F small perturb}
\begin{aligned}
    &\sup_{x\in \Delta_{k_0}} \ms F\big(\wti \Phi_\Lambda(x) , \Phi_\Lambda' (x)\big) \\
    & \leq \sup_{x\in \Delta_{k_0}} \ms F\big(\widehat \Phi_\Lambda(\rho_\nu(x)) , \Phi'_\Lambda (\rho_\nu(x))\big) +  \sup_{x\in \Delta_{k_0}} \ms F\big(\Phi'_\Lambda(\rho_\nu (x)), \Phi_\Lambda'  (x)\big) < \frac{1}{\Lambda}.
\end{aligned}
\end{equation}
Together with  \eqref{eq:m-l+1W+eta}, \eqref{eq:m-l-eta} and \eqref{eq:wti r and k0}, we conclude that there exists $\Lambda_2\geq \Lambda_0$ (depending on $\eta$) such that for all $\Lambda\geq \Lambda_2$,
\begin{equation}\label{eq:less than k0:smooth}
    \mc H^2(\wti\Phi^1_\Lambda(x)) < (k_0-1)\mc H^2(\Sigma_0) +3\eta, \text{ for all $x\in \Delta_{k_0}$} ,
\end{equation}
\begin{equation}\label{eq:m-l+1W+eta:smooth}
    \mc H^2(\wti\Phi^1_\Lambda(x)) < (m-\ell+1)\mc H^2(\Sigma_0)+ 2\eta, \text{ for all $x\in \mk F_{\ell,m}\subset \Delta_{k_0}$},
\end{equation}
\begin{equation}\label{eq:m-l-1W+eta:smooth}
    \mc H^2( \wti \Phi^1_\Lambda(x))< (m-\ell)\mc H^2(\Sigma_0) - \frac{1}{2}\eta, \text{ for all $x\in \partial \mk F_{\ell,m}\setminus (\mk F_{\ell+1,m}\cup \mk F_{\ell,m-1})$}.
\end{equation}

We now prove that $\wti\Phi_\Lambda$ also satisfies a similar property as in Lemma \ref{lem:mf F to flat F}. Note that $\Phi_\Lambda'$ may not be close to $\Phi_\Lambda$ away from $\partial \Delta_{k_0}$. Hence we can only obtain the following property on $\partial \Delta_{k_0}$. 

\begin{lemma}\label{lem:mf F small to mc F small:smooth}
Given $\theta>0$, there exist $\hat\tau_\theta > 0$, $\hat\Lambda_\theta > 0$, such that for all $\Lambda \geq \hat\Lambda_\theta$ and $x\in \mk F_{\ell,m}\subset \partial \Delta_{k_0}$,
\begin{equation}\label{eq:ms F to mc F:smooth}
\begin{aligned}
    \mf F\left(\wti\Phi_\Lambda^1(x), (m-\ell+1)[\Sigma_0]\right) & < \hat\tau_\theta \\
    & \Longrightarrow \mc F\left(\wti\Phi_\Lambda^2(x),(m+1)\llbracket S^3\rrbracket+(m-\ell+1)\llbracket \Omega(\frac{1}{2})\rrbracket\right)<\theta.
\end{aligned}  
\end{equation}
\end{lemma}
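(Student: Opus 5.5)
The plan is to transfer Lemma \ref{lem:mf F to flat F} from $\Phi_\Lambda$ to $\wti\Phi_\Lambda$. On the boundary $\partial\Delta_{k_0}$ the two maps are uniformly $\ms F$-close, because $\Phi'_\Lambda=\Phi_\Lambda$ on $\partial\Delta_{k_0}$ and \eqref{eq:ms F small perturb} gives $\ms F(\wti\Phi_\Lambda(x),\Phi'_\Lambda(x))<1/\Lambda$ for every $x$. Hence, for $x\in\partial\Delta_{k_0}$,
\[
\mf F\big(\wti\Phi^1_\Lambda(x),\Phi^1_\Lambda(x)\big)+\mc F\big(\wti\Phi^2_\Lambda(x),\Phi^2_\Lambda(x)\big)<\frac1\Lambda .
\]
This is exactly why the statement is restricted to $x\in\mk F_{\ell,m}\subset\partial\Delta_{k_0}$. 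In the interior, $\Phi'_\Lambda$ was deformed via Lemma \ref{lem:bounded by kW for all small r} and need not be close to $\Phi_\Lambda$.

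Concretely, given $\theta>0$, I would apply Lemma \ref{lem:mf F to flat F} with $k=k_0$ and $\theta/2$ in place of $\theta$. This produces $\tau:=\tau_{k_0,\theta/2}$ and $\Lambda_{k_0,\theta/2}$. I would then set
\[
\hat\tau_\theta:=\tau/2,\qquad \hat\Lambda_\theta:=\max\{\Lambda_{k_0,\theta/2},\,\Lambda_2,\,2/\tau,\,2/\theta\}.
\]
Now take $\Lambda\ge\hat\Lambda_\theta$ and $x\in\mk F_{\ell,m}\subset\partial\Delta_{k_0}$, and suppose $\mf F(\wti\Phi^1_\Lambda(x),(m-\ell+1)[\Sigma_0])<\hat\tau_\theta$. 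By the triangle inequality,
\[
\mf F\big(\Phi^1_\Lambda(x),(m-\ell+1)[\Sigma_0]\big)<\hat\tau_\theta+\frac1\Lambda\le\tau .
\]
Lemma \ref{lem:mf F to flat F} then gives
\[
\mc F\Big(\Phi^2_\Lambda(x),(m+1)\llbracket S^3\rrbracket+(m-\ell+1)\llbracket\Omega(\tfrac12)\rrbracket\Big)<\frac{\theta}{2}.
\]
A second triangle inequality, using $\mc F(\wti\Phi^2_\Lambda(x),\Phi^2_\Lambda(x))<1/\Lambda\le\theta/2$, yields \eqref{eq:ms F to mc F:smooth}.

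The main point to check is whether Lemma \ref{lem:mf F to flat F} applies to the boundary faces at all. Here $\mk F_{\ell,m}$ for $(\ell,m)\ne(1,k_0)$ lies in $\partial\Delta_{k_0}$. Points of the other boundary faces of $\Delta_{k_0}$, those with some $x_j=x_{j+1}$, also lie in $\mk F_{\ell,m}$ for suitable $\ell,m$; in particular they lie in $\mk F_{1,k_0}=\Delta_{k_0}$, and Lemma \ref{lem:mf F to flat F} is stated for all $x\in\mk F_{\ell,m}$. So no extra case analysis should be needed.

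I would also confirm that the $\ms F$-distance dominates the sum of the varifold $\mf F$-distance and the flat distance of the Caccioppoli sets, which holds by definition. Finally, the dependence of $\hat\Lambda_\theta$ only on $\theta$ (with $k_0$ and $\eta$ fixed) should be stated explicitly.
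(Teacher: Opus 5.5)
Your proposal is correct and matches the paper's proof. The paper also applies Lemma \ref{lem:mf F to flat F} at level $\theta/2$ and uses $\Phi'_\Lambda=\Phi_\Lambda$ on $\partial\Delta_{k_0}$ together with \eqref{eq:ms F small perturb} to move both the hypothesis and the conclusion across by the triangle inequality. It takes the same $\hat\tau_\theta=\tau/2$ and the same $\hat\Lambda_\theta=\max\{\Lambda_{k_0,\theta/2},2/\tau,2/\theta\}$. Your extra $\Lambda_2$ in that maximum is harmless but not needed.
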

\begin{proof}
Recall that by Lemma \ref{lem:mf F to flat F}, given $\theta>0$ (with $k=k_0$ fixed), there exist $\Lambda_{k_0, \theta} > 0$ and $\tau_{k_0, \theta} > 0$ such that for all $\Lambda > \Lambda_{k_0, \theta}$ and $x\in \mk F_{\ell,m}\subset \Delta_{k_0}$,
\begin{equation}\label{eq:ms F to mc F}
\begin{aligned}
    \mf F\left(\Phi_\Lambda^1(x), (m-\ell+1)[\Sigma_0]\right) & < \tau_{k_0, \theta}\\
    &\Longrightarrow \mc F\left(\Phi_\Lambda^2(x),(m+1)\llbracket S^3\rrbracket+(m-\ell+1)\llbracket \Omega(\frac{1}{2})\rrbracket\right)<\theta/2.
    \end{aligned}
\end{equation}
Recall that $\Phi_\Lambda'=\Phi_\Lambda$ on $\partial \Delta_{k_0}$. This together with \eqref{eq:ms F small perturb} implies that for $x\in \mk F_{\ell,m}\subset \partial\Delta_{k_0}$ and $\Lambda >\max\{\frac{2}{\tau_{k_0, \theta}}, \frac{2}{\theta}\}$,
\begin{align*}
    \mf F\left(\wti \Phi_\Lambda^1(x), (m-\ell+1)[\Sigma_0]\right) & < \frac{1}{2}\tau_{k_0, \theta} \\ & \Longrightarrow  \mf F\left( {\Phi'^1_\Lambda}(x), (m-\ell+1)[\Sigma_0]\right) \leq \frac{1}{2}\tau_{k_0, \theta} + \frac{1}{\Lambda} < \tau_{k_0, \theta} \\
    &\Longrightarrow \mf F\left( {\Phi^1_\Lambda}(x), (m-\ell+1)[\Sigma_0]\right) < \tau_{k_0, \theta}\\
\text{ by \eqref{eq:ms F to mc F}}   
    &\Longrightarrow \mc F\left(\Phi_\Lambda^2(x),(m+1)\llbracket S^3\rrbracket + (m-\ell+1)\llbracket \Omega(\frac{1}{2})\rrbracket\right)<\theta/2\\
\text{ by \eqref{eq:ms F small perturb}}     
    &\Longrightarrow \mc F\left( \wti\Phi^2_\Lambda(x),(m+1)\llbracket S^3\rrbracket + (m-\ell+1)\llbracket \Omega(\frac{1}{2})\rrbracket\right)<\theta/2 + \frac{1}{\Lambda} < \theta.
\end{align*}
Letting $\hat\Lambda_\theta := \max\{\Lambda_{k_0, \theta},\frac{2}{\tau_{k_0, \theta}}, \frac{2}{\theta}\}$ and $\hat\tau_\theta = \frac{1}{2}\tau_{k_0, \theta}$, we can deduce \eqref{eq:ms F to mc F:smooth}. 
\end{proof}

\section{Relative sweepouts on simplexes}\label{sec:apply min-max}

Under the assumption for proof by contradiction that $(S^3, g)$ admits only one embedded minimal 2-sphere $\Sigma_0$, we introduce a procedure for iterative relative min-max constructions in $\ms E$ so as to derive a contradiction at the end.

\subsection{A topological lemma}
The following topological lemma will be used in the Lusternik-Schnirelmann type arguments in Section \ref{SS:iterative min-max}.

\begin{lemma}\label{lem:top}
Let $X$ be a cubical subcomplex such that 
\[
    \partial X=\partial \Delta_k.
\]
Denote by $Y\subset X$ the subcomplex such that 
\begin{enumerate}
    \item\label{item:only intersects top and side} $Y$ does not intersect the closure of $\partial X\setminus (\mk F_{1,k-1}\cup \mk F_{2,k})$;
    \item\label{item:separation} $Y$ can be decomposed into two disjoint subsets $Y_1$ and $Y_2$ such that $Y_1\cap \mk F_{1,k-1}=\emptyset$ and $Y_2\cap \mk F_{2,k}=\emptyset$.
\end{enumerate}
Then by taking a subdivision of $X$, there exists a subcomplex $Z\subset X\setminus Y$ with $\partial Z=\partial \mk F_{1,k-1}$.
\end{lemma}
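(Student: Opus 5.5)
We will prove the lemma by mod $2$ homology and Lefschetz/Alexander duality on the $k$-dimensional pseudomanifold $X$. The goal is to produce a $(k-1)$-chain $Z$ in $X\setminus Y$ with $\partial Z=\partial\mk F_{1,k-1}$; equivalently, to show that the $(k-2)$-cycle $\partial\mk F_{1,k-1}$ bounds in $X\setminus Y$.

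\textbf{Boundary structure.} First we record how $\partial\Delta_k$ decomposes. The facets of $\Delta_k$ are
\[
\mk F_{2,k}=\{x_1=0\},\qquad \mk F_{1,k-1}=\{x_k=1\},\qquad G_i=\{x_i=x_{i+1}\}\ (1\le i\le k-1).
\]
Set $G=\overline{\partial X\setminus(\mk F_{1,k-1}\cup\mk F_{2,k})}=\bigcup_i G_i$. Then $\partial\mk F_{1,k-1}$ splits into $\mk F_{1,k-1}\cap\mk F_{2,k}$ and $\mk F_{1,k-1}\cap G$. By hypothesis (\ref{item:only intersects top and side}), $Y$ misses $G$. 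After a subdivision we may take a regular neighborhood of $Y$ in $X$ and work with it.

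\textbf{Constructing the chain.} Let $c=[X]$ be the fundamental $k$-chain mod $2$, so $\partial c=\mk F_{1,k-1}+\mk F_{2,k}+\sum_i G_i$. Let $U_1$ be a closed regular neighborhood of $Y_1$, and consider the $k$-chain $c\res U_1$ (the cells of the subdivision lying in $U_1$). Its boundary is $\partial U_1\cap X$ plus the part of $\partial X$ inside $U_1$. Since $Y_1$ misses $G$ and $\mk F_{1,k-1}$, that part of $\partial X$ lies in $\mk F_{2,k}$ only. Now define
\[
Z:=\mk F_{1,k-1}+\partial(c\res U_1)\quad\text{restricted suitably}.
\]
More concretely, take $Z'=\partial\big(c\res(X\setminus U_1)\big)-\mk F_{2,k}\res(X\setminus U_1)-\sum_i G_i$. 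Then $\partial Z'$ equals $\partial\mk F_{2,k}\res(\cdots)+\sum_i\partial G_i$ corrected by $\partial(\mk F_{2,k}\res U_1)$. The cleanest choice is
\[
Z:=\mk F_{1,k-1}+\partial\big(c\res U_1\big)\ \text{(mod 2)},
\]
since $\partial Z=\partial\mk F_{1,k-1}$ automatically, because $\partial\partial=0$.

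\textbf{Avoiding $Y$.} It remains to check that $Z$ avoids $Y$.
\begin{enumerate}
\item The support of $\partial(c\res U_1)$ is $(\partial U_1\cap X)\cup(\mk F_{2,k}\cap U_1)$.
\item $\partial U_1$ avoids $Y$, because $U_1$ is a small neighborhood of $Y_1$ disjoint from $Y_2$.
\item $\mk F_{2,k}\cap U_1$ contains points of $Y_1$ only if $Y_1$ meets $\mk F_{2,k}$. That can happen, so we must instead use the other piece.
\end{enumerate}
Recompute with the roles adjusted: take $Z:=\mk F_{1,k-1}+\partial(c\res U_2)$, where $U_2$ is a regular neighborhood of $Y_2$. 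Then $\partial(c\res U_2)$ is supported in $\partial U_2\cup(\partial X\cap U_2)$. The set $\partial X\cap U_2$ lies in $\mk F_{1,k-1}$, since $Y_2$ misses $\mk F_{2,k}$ and $G$. Mod $2$, the contribution $\mk F_{1,k-1}\res U_2$ cancels the portion of $\mk F_{1,k-1}$ inside $U_2$. Hence $Z=\mk F_{1,k-1}\res(X\setminus U_2)+(\partial U_2\cap X)$.

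This chain avoids $Y_2$, since it lies outside the interior of $U_2$, and it avoids $Y_1$ on $\partial U_2$. The one remaining issue is that $\mk F_{1,k-1}$ itself must avoid $Y_1$, and this holds by hypothesis (\ref{item:separation}).

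\textbf{Main obstacle.} The principal difficulty is the PL/cubical bookkeeping. One must arrange, after subdivision, that $U_2$ is a subcomplex whose frontier is a genuine $(k-1)$-chain disjoint from $Y$, meets $\partial X$ correctly, and satisfies $U_2\cap Y_1=\emptyset$. We handle this by taking $U_2$ to be the closed star of $Y_2$ in a fine barycentric-type cubical subdivision, fine enough that this star is disjoint from $Y_1$ and from $\partial X\setminus\mk F_{1,k-1}$.
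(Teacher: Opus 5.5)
Your final construction is correct and is essentially the paper's proof written out at the chain level. The paper applies the Mayer--Vietoris connecting map for $A=X\setminus Y_1$, $B=X\setminus Y_2$ to the null-homologous cycle $\partial X=\mk F_{1,k-1}+\oli{\partial X\setminus\mk F_{1,k-1}}$; your $Z=\mk F_{1,k-1}+\partial(c\res U_2)$ is exactly the chain that computes this connecting map once $c$ is split as $c\res U_2$ (a chain in $A$) plus $c\res(X\setminus U_2)$ (a chain in $B$).

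Your explicit version produces the subcomplex directly, without passing from singular to cubical chains. Two small fixes are worth making:
\begin{itemize}
\item When $X$ is not a pseudomanifold, the cancellation on $\mk F_{1,k-1}\cap U_2$ can be incomplete. Any surviving cell is then a face of a $k$-cell outside $U_2$, so it still misses $Y$, but the argument should say so.
\item Drop the abandoned $U_1$ attempt and the appeal to Lefschetz/Alexander duality, which you never use.
\end{itemize}
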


\begin{proof}
Denote by $A:= X\setminus Y_1$ and $B:=X\setminus Y_2$. Since $Y_1\cap Y_2=\emptyset$, we have that $A\cup B = X$ and the relative interiors of $A$ and $B$ cover $X$. Observe that there is a homomorphism \cite{Hat-AT-book}*{Section 2.2, Page 150}
\[
   \partial_*: H_{k-1}(X;\mb Z) \longrightarrow H_{k-2}(A \cap B;\mb Z)
\]
defined as follows: a class $\alpha \in H_{k-1}(X;\mb Z)$ is represented by a cycle $z$, and we can choose $z$ to be a sum $x+y$ of chains in $A$ and $B$, respectively. Then the element $\partial_*\alpha \in H_{k-2}(A \cap B;\mb Z)$ is represented by the cycle $\partial x = -\partial y$.

Note that $\partial X$ can be decomposed into two pieces: $\mk F_{1,k-1} \subset A$ (since $Y_1 \cap \mk F_{1,k-1} = \emptyset$ by assumption \eqref{item:separation}) and the closure $\overline{\partial X\setminus \mk F_{1,k-1}} \subset B$. The latter inclusion holds because $Y_2$ avoids $\mk F_{2,k}$ by assumption \eqref{item:separation} and avoids the remainder of the boundary by assumption \eqref{item:only intersects top and side}. 

These two chains share the same boundary, $\partial \mk F_{1,k-1}$. Hence, by definition, we have
\[
     \partial_*([\partial X]) = [\partial \mk F_{1,k-1}] \in H_{k-2}(A\cap B;\mb Z).
\]
Clearly, $[\partial X]=0\in H_{k-1}(X;\mb Z)$ since it bounds $X$. It follows that $[\partial \mk F_{1,k-1}]=0\in H_{k-2}(A\cap B;\mb Z)$. This implies that after a subdivision of $X$(c.f. \cite{Hat-AT-book}*{Theorem 2.27}), $\partial \mk F_{1,k-1}$ bounds a subcomplex in $A\cap B = X\setminus Y$. 
\end{proof}

\subsection{Iterative relative min-max constructions}\label{SS:iterative min-max}

We start by adapting the setups for relative min-max constructions in Section \ref{SS:SS min-max} to the current setting. We can always embed $\Delta_{k_0}$ as a cubical subcomplex of some $I(m_0, n)$. Let $Z$ be a cubical subcomplex of $\partial \Delta_{k_0}$, for example, $Z = \partial \Delta_{k_0}$ or $\partial \mk F_{1, m}$. Given a continuous map $\Phi: Z \to \ms E$, we denote by $[\Phi; \rel Z]$ the collection of continuous maps $\Psi: X \to \ms E$ such that $\partial X = Z$ (where $X\subset I(m_0,n')$ is a cubical subcomplex for some $n'\geq n$) and $\Psi|_{\partial X = Z}=\Phi$. Denote by 
\[
    \mf L([\Phi; \rel Z]):= \inf_{\Psi\in [\Phi;\rel Z]} \sup_{x\in \mr{dom} (\Psi)} \mc H^2(\Psi(x)).
\]

Recall that $\Lambda_0$, $k_0$, $\Lambda_2$ are fixed in Section \ref{sec:smooth family}. Let $\theta_0$, $\tau_0$ be given in Proposition \ref{prop:no flipping near a varifold},  and $\hat\tau_{\theta_0}$, $\hat\Lambda_{\theta_0}$ be given in Lemma \ref{lem:mf F small to mc F small:smooth} associated with $\theta = \theta_0$.

For any $\Lambda\geq \Lambda_0$, recall that $\wti \Phi_\Lambda$ is given in \eqref{eq:widetildePhi}. For $\Lambda\geq \Lambda_2$, by \eqref{eq:less than k0:smooth}, we have
\begin{equation}\label{eq:less than k0}
    \mf L([\wti \Phi_\Lambda|_{\partial \Delta_{k_0}};\rel \partial \Delta_{k_0}])<(k_0-1)\mc H^2(\Sigma_0)+3\eta.
\end{equation}

\begin{proposition}\label{prop:Lm=mW}
Suppose that $(S^3,g)$ admits only one embedded minimal sphere $\Sigma_0$. Then 
\begin{itemize}
    \item[($*$)] there exists $\Lambda_3 \geq \max\{\Lambda_2, \hat\Lambda_{\theta_0}\}$ such that for all $1\leq m\leq k_0$ and $\Lambda\geq \Lambda_3$, we have
    \[ \mf L([\wti \Phi_{\Lambda}|_{\partial \mk F_{1,m}}; \rel \partial \mk F_{1,m}])= m\mc H^2(\Sigma_0).\]
\end{itemize}
\end{proposition}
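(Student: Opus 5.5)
Let $W:=\mc H^2(\Sigma_0)$. The plan is an induction on $m$, with $\Lambda$ large. The upper bound is the easy half. By \eqref{eq:m-l+1W+eta:smooth}, $\wti\Phi_\Lambda|_{\mk F_{1,m}}$ is itself an element of $[\wti\Phi_\Lambda|_{\partial\mk F_{1,m}};\rel\partial\mk F_{1,m}]$, so
\[
\mf L_m:=\mf L([\wti\Phi_\Lambda|_{\partial\mk F_{1,m}};\rel\partial\mk F_{1,m}])< mW+2\eta .
\]
Since $\eta$ can be taken smaller than $W/4$, this gives $\mf L_m<(m+1)W$.

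For the lower bound I first show the width is nontrivial, i.e.\ $\mf L_m>\sup_{\partial\mk F_{1,m}}\mc H^2(\wti\Phi^1_\Lambda)$. Then Theorem \ref{thm:relative min-max theorem} realizes $\mf L_m$ by a stationary integral varifold supported on embedded minimal spheres. By uniqueness this varifold is $j[\Sigma_0]$, so $\mf L_m=jW$ for some integer $j$. The upper bound forces $j\le m$, so it suffices to prove $\mf L_m>(m-1)W+\eta'$ for some small $\eta'>0$; that forces $j=m$. The boundary $\partial\mk F_{1,m}$ decomposes into $\mk F_{2,m}$, $\mk F_{1,m-1}$, and the faces where $x_j=x_{j+1}$. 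On the latter, \eqref{eq:m-l-1W+eta:smooth} gives area $<(m-1)W-\eta/2$. On $\mk F_{2,m}$ and $\mk F_{1,m-1}$, \eqref{eq:m-l+1W+eta:smooth} gives area $<(m-1)W+2\eta$. The case $m=1$ is just Simon--Smith with the point endpoints, giving $\mf L_1=W$.

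The inductive step, showing $\mf L_m\ge mW$ from $\mf L_{m-1}=(m-1)W$, is the Lusternik--Schnirelmann argument and the main obstacle. Suppose instead that $\mf L_m<mW$. Then $\mf L_m\le (m-1)W$ (or $\mf L_m$ is not even a nontrivial width), and I take a competitor $\Psi:X\to\ms E$ with $\partial X=\partial\mk F_{1,m}$ and $\sup\mc H^2(\Psi)<(m-1)W+\epsilon$. The plan is to show it contradicts the induction hypothesis. Let $Y\subset X$ be (a subcomplex neighborhood of) the set where $\mf F(\Psi^1(x),(m-1)[\Sigma_0])\le\min\{\tau_0,\hat\tau_{\theta_0}\}/2$. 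Using \eqref{eq:m-l-1W+eta:smooth}, the faces $x_j=x_{j+1}$ carry area $\le (m-1)W-\eta/2$, so for small $\tau$ they avoid $Y$; this is hypothesis (1) of Lemma \ref{lem:top}.

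For hypothesis (2), split $Y$ into two pieces: $Y_1$, the components meeting $\mk F_{2,m}$, and $Y_2$, the rest. The two pieces must be disjoint from $\mk F_{1,m-1}$ and $\mk F_{2,m}$ respectively. This is where Lemma \ref{lem:mf F small to mc F small:smooth} and Proposition \ref{prop:no flipping near a varifold} enter. Points of $Y\cap\mk F_{2,m}$ have $\Psi^2$ flat-close to $(m+1)\llbracket S^3\rrbracket+(m-1)\llbracket\Omega(\frac12)\rrbracket$. Points of $Y\cap\mk F_{1,m-1}$ have $\Psi^2$ flat-close to $m\llbracket S^3\rrbracket+(m-1)\llbracket\Omega(\frac12)\rrbracket$. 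These two targets differ by $\llbracket S^3\rrbracket$. A path within one component of $Y$ joining the two faces stays in $\mc U_{\tau_0}((m-1)|\Sigma_0|)$, so Proposition \ref{prop:no flipping near a varifold} forbids it, since the sum of its endpoints would be $\theta_0$-close to $\llbracket S^3\rrbracket$. This requires $\theta_0$-closeness for the chosen $\Lambda\ge\hat\Lambda_{\theta_0}$, which fixes $\Lambda_3$.

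Lemma \ref{lem:top}, applied with $k=m$, then gives $Z\subset X\setminus Y$ with $\partial Z=\partial\mk F_{1,m-1}$. Hence $\Psi|_Z\in[\wti\Phi_\Lambda|_{\partial\mk F_{1,m-1}};\rel\partial\mk F_{1,m-1}]$, and on $Z$ the map $\Psi$ stays $\tau$-far from $(m-1)[\Sigma_0]$ with area $<(m-1)W+\epsilon$. Taking $\epsilon\to0$ along a minimizing sequence of such competitors, Theorem \ref{thm:min-max thm for sequences} yields a min-max sequence for the classes relative to $\partial\mk F_{1,m-1}$ converging to a varifold of mass $(m-1)W$ supported on minimal spheres. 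Its boundary nontriviality comes from the induction step. By uniqueness that varifold is $(m-1)[\Sigma_0]$, yet it lies outside the $\tau$-neighborhood, a contradiction. The delicate points are making the competitor domain independent of $\epsilon$ and choosing $Y$ as a genuine subcomplex; both are handled by subdivision.
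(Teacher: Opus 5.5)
\paragraph{Gap in the inductive step.} You fix a single $\Lambda$, let $\epsilon\to0$, and assert competitors $\Psi$ with $\sup_X\mc H^2(\Psi^1)<(m-1)W+\epsilon$. Such competitors need not exist. Every competitor restricts to $\wti\Phi_\Lambda$ on $\partial\mk F_{1,m}$, so $\sup_X\mc H^2(\Psi^1)\ge S_\Lambda:=\sup_{\partial\mk F_{1,m}}\mc H^2(\wti\Phi^1_\Lambda)$. At finite $\Lambda$ you only know $(m-1)W\le S_\Lambda\le(m-1)W+2\eta$, and nothing forces $S_\Lambda=(m-1)W$.

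The case you treat as the main one (nontrivial width with $\mf L_m\le(m-1)W$) is in fact vacuous. Nontriviality gives $\mf L_m>S_\Lambda\ge\sup_{\mk F_{1,m-1}}\mc H^2(\wti\Phi^1_\Lambda)\ge(m-1)W$ by the induction hypothesis. The realizing varifold $j[\Sigma_0]$ would then have mass strictly between $(m-1)W$ and $mW$, which is impossible. So the Lusternik--Schnirelmann argument is needed exactly in the trivial case $\mf L_m=S_\Lambda$, the one you set aside in a parenthesis.

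In that case, at fixed $\Lambda$, $\sup_Z\mc H^2(\Psi^1)$ can remain anywhere in $[(m-1)W,S_\Lambda]$. Then the maps $\Psi|_Z$ are not a minimizing sequence for the $\partial\mk F_{1,m-1}$-problem, and Theorem \ref{thm:min-max thm for sequences} does not apply. The maxima over $Z$ may sit on surfaces of area above $(m-1)W$ that are far from $(m-1)[\Sigma_0]$, so no contradiction arises.

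\paragraph{The missing idea.} You need to send $\Lambda\to\infty$ inside the contradiction argument, as the paper does:
\begin{enumerate}
\item Assume there are $\lambda_i\to\infty$ with $\mf L_m(\lambda_i)<mW$. The dichotomy above forces $\mf L_m(\lambda_i)=S_{\lambda_i}$, and $S_{\lambda_i}\to(m-1)W$ by \eqref{eq:limit bound} and \eqref{eq:ms F small perturb}.
\item This yields competitors $\Psi_i:X_i\to\ms E$ with $\sup\mc H^2(\Psi_i^1)\to(m-1)W$. Your sets $Y_i$, the separation via Proposition \ref{prop:no flipping near a varifold}, and Lemma \ref{lem:top} then go through as you describe.
\item Theorem \ref{thm:min-max thm for sequences} is applied with the \emph{varying} boundary data $\wti\Phi_{\lambda_i}|_{\partial\mk F_{1,m-1}}$. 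This data is nontrivial: its areas are $<(m-2)W+2\eta$ uniformly, while the widths equal $(m-1)W$ by induction.
\end{enumerate}
This is why that theorem is stated for a sequence of boundary maps, and why $\Lambda_3$ comes out non-explicitly rather than as $\max\{\Lambda_2,\hat\Lambda_{\theta_0}\}$.

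\paragraph{Related and smaller points.} Your base case has the same issue. The endpoints of $\wti\Phi_\Lambda|_{\mk F_{1,1}}$ are thin tubes, not points, and their area tends to $0$ only as $\Lambda\to\infty$. Nontriviality therefore needs Proposition \ref{prop:no flipping near a varifold} with $k=0$ and $\Lambda$ large.

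To make the endpoint sum $\theta_0$-close to $\llbracket S^3\rrbracket$, each endpoint must be within $\theta_0/2$, so you need $\hat\tau_{\theta_0/2}$ rather than $\hat\tau_{\theta_0}$.

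Finally, $\mf L_m>(m-1)W+\eta'$ with small $\eta'$ does not by itself give nontriviality, since boundary areas on $\mk F_{1,m-1}\cup\mk F_{2,m}$ may approach $(m-1)W+2\eta$.
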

\begin{proof}
We prove this inductively. 

\medskip
\noindent{\bf Step I: $m=1$}.
Note that $\mk F_{1,1}= [0,1]\times \{1^{k_0-1}\}$. By Lemma \ref{lem:flat convergence of Phi}, \eqref{eq:ms F small perturb} and the fact that $\Phi'_\Lambda = \Phi_\Lambda$ along $\partial \Delta_{k_0}$, we have
\[
   \lim_{\Lambda\to +\infty} \wti \Phi_\Lambda (0,1^{k_0-1})= \lim_{\Lambda\to \infty} \Phi_{\Lambda} (0,1^{k_0-1}) = (0, \emptyset),
   \]
\[
      \lim_{\Lambda\to +\infty} \wti \Phi_\Lambda (1,\cdots, 1)= \lim_{\Lambda\to \infty} \Phi_{\Lambda} (1,\cdots,1) = (0, \llbracket S^3\rrbracket), 
\]
in the sense of $\ms F$-topology. In particular,
\[
    \lim_{\Lambda\to+\infty } \mc F\big(\wti \Phi^2_\Lambda (0,1^{k_0-1})+ \wti \Phi^2_\Lambda (1^{k_0}),\llbracket S^3\rrbracket \big) =0.
\]    
Application of Proposition \ref{prop:no flipping near a varifold} gives $\tau_0>0$, such that 
for all sufficiently large $\Lambda$ and for all $\Psi=(\Psi^1,\Psi^2)\in [\wti \Phi_\Lambda|_{\partial \mk F_{1,1}};\rel \partial \mk F_{1,1}]$, (with $k=0$ therein,)
\[
   \sup_{x\in \mr{dom} (\Psi)} \mf F\big(\Psi^1(x),0\big) \geq \tau_0.
\]
It follows that $\mf L( [\wti \Phi_\Lambda|_{\partial \mk F_{1,1}};\rel \partial \mk F_{1,1} ]) >b_0$ for some constant $b_0>0$. Since $\wti \Phi^1_\Lambda|_{\partial \mk F_{1,1}} \to 0$ in the sense of varifold as $\Lambda\to+\infty$, by Theorem \ref{thm:relative min-max theorem},  
$\mf L( [\wti \Phi_\Lambda|_{\partial \mk F_{1,1}} ;\rel \partial \mk F_{1,1}])$ is realized by a stationary integral varifold supported on embedded minimal spheres for all sufficiently large $\Lambda$. For such $\Lambda$, it follows that $\mf L( [\wti \Phi_\Lambda|_{\partial \mk F_{1,1}} ;\rel \partial \mk F_{1,1}])\geq \mc H^2(\Sigma_0)$. This together with \eqref{eq:m-l+1W+eta:smooth} implies that
\[
    \mf L\left([\wti \Phi_{ \Lambda}|_{\partial \mk F_{1,1}};\rel \partial \mk F_{1,1}]\right) = \mc H^2(\Sigma_0).
\]
Hence ($*$) holds true for $m=1$.

\medskip
\noindent{\bf Step II: Suppose that ($*$) holds true for $m=1,\cdots, j<k_0$.} By \eqref{eq:m-l+1W+eta:smooth} \eqref{eq:m-l-1W+eta:smooth} and Theorem \ref{thm:relative min-max theorem}, we have for $\Lambda \geq \Lambda_2$
\[ 
   \mf L\left([\wti \Phi_{ \Lambda}|_{\partial \mk F_{1,j+1}};\rel \partial \mk F_{1,j+1}]\right)  \leq (j+1)\mc H^2(\Sigma_0).
\]

Suppose on the contrary that ($*$) does not hold for $m=j+1$, that is, there exists a sequence $\lambda_i\to \infty$, such that
\[
 \mf L\left([\wti \Phi_{\lambda_i}|_{\partial \mk F_{1,j+1}};\rel \partial \mk F_{1,j+1}]\right)  < (j+1)\mc H^2(\Sigma_0).
\]
Observe that 
\[
  \mf L\left([\wti \Phi_{\lambda_i}|_{\partial \mk F_{1,j+1}}; \rel \partial \mk F_{1,j+1}]\right) \geq \sup \{\mc H^2(\wti \Phi^1_{\lambda_i}(x));x\in \partial \mk F_{1,j+1}\} \geq 
  j\mc H^2(\Sigma_0).
\]
The last $\geq$ follows from the induction hypothesis since $\partial \mk F_{1,j+1}\supset \partial \mk F_{1,j} $ and hence $\wti\Phi_{\lambda_i}|_{\partial \mk F_{1,j+1}}$ contains an element in $[\wti \Phi_{\lambda_i}|_{\partial \mk F_{1,j}};\rel \partial \mk F_{1,j}]$.  
If the following holds 
\[
\mf L\left([\wti \Phi_{\lambda_i}|_{\partial \mk F_{1,j+1}};\rel \partial \mk F_{1,j+1}]\right) > \sup \{\mc H^2(\wti \Phi^1_{\lambda_i}(x));x\in \partial \mk F_{1,j+1}\},
\]
then by Theorem \ref{thm:relative min-max theorem}, $\mf L\left([\wti\Phi_{\lambda_i}|_{\partial \mk F_{1,j+1}};\rel \partial \mk F_{1,j+1}]\right)$ is realized by a stationary integral varifold supported on embedded minimal spheres, with total measure lying in $\big(j\mc H^2(\Sigma_0), (j+1)\mc H^2(\Sigma_0)\big)$. This contradicts with the assumption that $\Sigma_0$ is the only embedded minimal sphere. Thus we conclude that 
\[
    \mf L\left([\wti\Phi_{\lambda_i}|_{\partial \mk F_{1,j+1}};\rel \partial \mk F_{1,j+1}]\right) = \sup \{\mc H^2(\wti \Phi_{\lambda_i}^1(x));x\in \partial \mk F_{1,j+1}\} \to j\mc H^2(\Sigma_0), \text{ as $i\to\infty$}.
\]
The last $\to$ follows from \eqref{eq:limit bound} and \eqref{eq:ms F small perturb}.
Therefore there exists a sequence of complexes $X_i\subset I(m_0,n'_i)$ 
and continuous maps $\Psi_i: X_i\to  \ms E$ with $\partial X_i=\partial \mk F_{1,j+1}$, $\Psi_i|_{\partial X_i}= \wti \Phi_{\lambda_i}|_{\partial \mk F_{1,j+1}}$, such that
\begin{equation}\label{eq:bounded by jSigma0}
   \sup_{x\in X_i} \mc H^2( \Psi^1_i(x) ) \to j \mc H^2(\Sigma_0), \quad \text{ as } i\to\infty.
\end{equation}

Recall that $\tau_0$ and $\theta_0$ are the constants in Proposition \ref{prop:no flipping near a varifold}. Let $\tau'<\tau_0$ be small enough such that 
\begin{equation}\label{eq:mf F small to mf F bound}
    \|V\|(M) > j\mc H^2(\Sigma_0) -\eta/2, \text{ for all $V\in \mc V(S^3)$ with $\mf F(V,j[\Sigma_0])\leq \tau'$}.
\end{equation}
Let $\tau = \hat\tau_{\theta_0/2} <\tau'$ be the constant given in Lemma \ref{lem:mf F small to mc F small:smooth} by taking $\theta= \frac{1}{2}\theta_0$.
Let 
\[
    Y_i:=\{ x\in X_i: \, \mf F(\Psi_i^1(x), j[\Sigma_0]) <\tau \}.
\]
Note that $\mk F_{1,j}\subset \partial \mk F_{1,j+1}=\partial X_i$. We now claim that $Y_i\cap \mk F_{1,j}\neq \emptyset$ for all large $i$. Indeed, one can take $x_1<x_2<\cdots<x_{j}$ such that 
\[
   \mf F \left( \sum_{i=1}^{j} [\Upsilon(x_i)], j[\Sigma_0]\right)<\tau/2. 
\]
(Recall that $\Upsilon$ is the foliation given in the beginning of Section \ref{sec: construct family of spheres}.) 
Note that by Proposition \ref{prop:2k-1 pieces} (2) and \eqref{eq:ms F small perturb}, for $x_1<x_2<\cdots<x_{j}$, 
\[
    \lim_{i\to\infty}\wti \Phi_{\lambda_i}^1(x_1,\cdots, x_{j},1^{k_0-j}) =\lim_{i\to\infty} \Phi_{\lambda_i}^1( x_1,\cdots,x_j,1^{k_0-j})= \sum_{i=1}^{j} [\Upsilon(x_i)] 
\]
in the sense of varifolds. Hence for all large $i$,
\[
 \mf F\big( \Psi_i^1(x_1,\cdots, x_{j},1^{k_0-j}) , j[\Sigma_0]\big) =\mf F\big( \wti \Phi_{\lambda_i}^1(x_1,\cdots, x_{j},1^{k_0-j}) , j[\Sigma_0]\big) < \tau,
\]   
which implies that $(x_1,\cdots, x_{j},1^{k_0-j}) \in Y_i$ and hence
\[
    Y_i\cap \mk F_{1,j} \neq \emptyset. 
\]
Similarly, we also have that for all large $i$,
\[
     Y_i\cap \mk F_{2,j+1}\neq \emptyset. 
\]

By Lemma \ref{lem:mf F small to mc F small:smooth} and the fact that $\mk F_{1,j}\cup \mk F_{2,j+1}\subset\partial \Delta_{k_0}$ (because of $j<k_0$), we have
\[
x\in Y_i\cap \mk F_{1,j} \Longrightarrow \mc F\left(\wti \Phi_{\lambda_i}^2(x),(j+1)\llbracket S^3\rrbracket + j\llbracket \Omega(\frac{1}{2})\rrbracket\right) < \frac{1}{2}\theta_0,
\]
\[
y\in Y_i\cap \mk F_{2,j+1} \Longrightarrow \mc F\left( \wti\Phi_{\lambda_i}^2(y)),(j+2)\llbracket S^3\rrbracket + j\llbracket \Omega(\frac{1}{2})\rrbracket\right) < \frac{1}{2}\theta_0.
\]
Therefore, we obtain that for any $x\in Y_i\cap \mk F_{1,j}$ and $y\in Y_i\cap \mk F_{2,j+1}$, 
\begin{align*}
       &\ \ \ \ \mc F\big(\Psi_{i}^2(x)+\Psi_{i}^2(y), \llbracket S^3\rrbracket \big) \\
       &\leq \mc F\left(\wti \Phi_{\lambda_i}^2(x),(j+1)\llbracket S^3\rrbracket + j\llbracket \Omega(\frac{1}{2})\rrbracket\right) +  \mc F\left(\wti \Phi_{\lambda_i}^2(y),(j+2)\llbracket S^3\rrbracket + j\llbracket \Omega(\frac{1}{2})\rrbracket\right)\\
       &<\frac{1}{2}\theta_0 + \frac{1}{2}\theta_0 =\theta_0,
\end{align*}
where we used the fact that $\partial X_i=\partial \mk F_{1,j+1}$ and $\Psi_i|_{\partial X_i} = \wti \Phi_{\lambda_i}|_{\partial \mk F_{1,j+1}}$.

Hence by Proposition \ref{prop:no flipping near a varifold}, there does not exist any curve joining $Y_i\cap \mk F_{1,j}$ and $Y_i\cap \mk F_{2,j+1}$ in $Y_i$, that is, $Y_i$ can be decomposed into two disjoint subset $Y_i^1$ and $Y_i^2$ such that 
\[
    Y_i^1\cap \mk F_{1,j}=\emptyset \text{ and } Y_i^2\cap \mk F_{2,j+1}=\emptyset.
\]
Recall that by \eqref{eq:m-l-1W+eta:smooth}
\[
   \mc H^2( \Psi^1_i(x)) = \mc H^2( \wti \Phi_{\lambda_i}^1 (x)) \leq j\mc H^2(\Sigma_0) -\eta/2, \text{ for all $x\in \partial \mk F_{1,j+1}\setminus (\mk F_{1,j}\cup \mk F_{2,j+1})$}.
\]
This together with \eqref{eq:mf F small to mf F bound} implies that $Y_i$ does not intersect $\partial \mk F_{1,j+1}\setminus (\mk F_{1,j}\cup \mk F_{2,j+1})$. Now we have verified the conditions \eqref{item:only intersects top and side} and \eqref{item:separation} in Lemma \ref{lem:top} for $Y=Y_i$, $X=X_i$, $k=j+1$ therein. Hence by Lemma \ref{lem:top}, there exists a subcomplex $Z_i\subset X_i\setminus Y_i$ with $\partial Z_i=\partial \mk F_{1,j}$. Observe that $\Psi_i|_{Z_i}\in [\wti \Phi_{\lambda_i}|_{\partial \mk F_{1,j}}; \partial \mk F_{1,j}]$. 
Then by induction, we have that for $i$ large enough
\[
    \sup_{x\in {Z_i}} \mc H^2(\Psi_i^1(x)) \geq \mf L( [\wti \Phi_{\lambda_i}|_{\partial \mk F_{1,j}}; \partial \mk F_{1,j}]) = j\mc H^2(\Sigma_0).
\]
This together with \eqref{eq:bounded by jSigma0} gives that
\[
    \sup_{x\in {Z_i}} \mc H^2(\Psi_i^1(x)) \to j\mc H^2(\Sigma_0), \text{ as } i\to \infty.
\]
Recall that by \eqref{eq:m-l+1W+eta:smooth} and \eqref{eq:m-l-1W+eta:smooth}, for $x\in \partial Z_i= \partial \mk F_{1,j}$,
\[
    \mc H^2(\Psi_i^1(x))=\mc H^2(\wti \Phi_{\lambda_i}^1 (x) ) \leq (j-1)\mc H^2(\Sigma_0) + 2\eta.
\]
Therefore $\{\Psi_i: Z_i\to \ms E\}$ forms a minimizing sequence associated with boundary data $\{\wti \Phi_{\lambda_i}: \partial \mk F_{1,j} \to \ms E\}$ satisfying \eqref{eq:width nontrivial2}. 
Thus by Theorem \ref{thm:min-max thm for sequences}, we conclude that there exists a subsequence $\{y_{i'}\in Z_{i'}\}$ such that 
\[
    \mf F(\Psi_{i'}^1(y_{i'}), j[\Sigma_0]) \to 0, \text{ as $i'\to \infty$}.
\]
This contradicts the choice of $Y_i$ and $Z_i$. This finishes the induction and completes the proof of Proposition \ref{prop:Lm=mW}.
\end{proof}

\subsection{Main result}
We are now ready to prove the main result. 
\begin{theorem}\label{thm:two spheres}
Every Riemannian 3-sphere contains at least two distinct embedded minimal spheres.
\end{theorem}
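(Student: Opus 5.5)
We argue by contradiction. Suppose $\Sigma_0$ is the only embedded minimal sphere in $(S^3,g)$. By the discussion at the start of Section \ref{sec: construct family of spheres}, we may then assume $\Sigma_0$ has an expanding neighborhood on both sides. (If instead $\Sigma_0$ had a contracting neighborhood, cutting along it and using the argument of \cite{Song18} would already produce a second sphere.) So we may use the mean convex foliation $\Upsilon$ and all of the constructions of Sections \ref{sec: construct family of spheres}--\ref{sec:smooth family}. In particular, $\eta$, $\Lambda_0$, $k_0$, $\Lambda_2$ and the smooth maps $\wti\Phi_\Lambda:\Delta_{k_0}\to\ms E$ are fixed as there. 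We take $\eta$ small enough that $3\eta<\mc H^2(\Sigma_0)$.

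The key step is to apply Proposition \ref{prop:Lm=mW} with $m=k_0$. Since $\mk F_{1,k_0}=\Delta_{k_0}$, it gives, for every $\Lambda\geq\Lambda_3$,
\[
\mf L\big([\wti\Phi_\Lambda|_{\partial\Delta_{k_0}};\rel\partial\Delta_{k_0}]\big)=k_0\,\mc H^2(\Sigma_0).
\]
On the other hand, $\wti\Phi_\Lambda$ restricted to $\Delta_{k_0}$ is itself an admissible competitor in $[\wti\Phi_\Lambda|_{\partial\Delta_{k_0}};\rel\partial\Delta_{k_0}]$, because its domain is $\Delta_{k_0}$ with boundary $\partial\Delta_{k_0}$. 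This is exactly the bound \eqref{eq:less than k0}. By \eqref{eq:less than k0:smooth}, for all $\Lambda\geq\Lambda_2$,
\[
\mf L\big([\wti\Phi_\Lambda|_{\partial\Delta_{k_0}};\rel\partial\Delta_{k_0}]\big)\leq\sup_{x\in\Delta_{k_0}}\mc H^2(\wti\Phi^1_\Lambda(x))<(k_0-1)\mc H^2(\Sigma_0)+3\eta<k_0\,\mc H^2(\Sigma_0).
\]
Taking $\Lambda\geq\Lambda_3\geq\Lambda_2$, the two displays contradict each other. Hence $(S^3,g)$ contains a second embedded minimal sphere.

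The only delicate point is compatibility of the constants. The choices of $k_0$ (via Lemma \ref{lem:choose N} with $\Lambda_0$), the interpolation of Lemma \ref{lem:bounded by kW for all small r}, and the threshold $\Lambda_3$ must be arranged so that the strict upper bound $(k_0-1)\mc H^2(\Sigma_0)+3\eta$ survives for arbitrarily large $\Lambda$, while the boundary behavior needed in Proposition \ref{prop:Lm=mW} is also kept. The construction in Section \ref{sec:smooth family} does exactly this, since $k_0$ is fixed first and \eqref{eq:less than k0:smooth} holds for all $\Lambda\geq\Lambda_2$. What remains is only to check that $\eta$ was chosen small relative to $\mc H^2(\Sigma_0)$. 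That is permitted, since $\eta$ in \eqref{eq:choice of eta0} may be shrunk freely.
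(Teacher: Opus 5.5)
Your proposal is correct and is the paper's proof: assuming $\Sigma_0$ is the unique embedded minimal sphere, Proposition \ref{prop:Lm=mW} with $m=k_0$ (where $\mk F_{1,k_0}=\Delta_{k_0}$) forces the relative width to equal $k_0\,\mc H^2(\Sigma_0)$ for $\Lambda\geq\Lambda_3$, and this contradicts \eqref{eq:less than k0}. Your explicit requirement $3\eta<\mc H^2(\Sigma_0)$ spells out what the paper leaves implicit in ``fix $\eta$ small enough.''
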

\begin{proof}
Suppose not; then by Proposition \ref{prop:Lm=mW}, we have that for all $\Lambda>\Lambda_3$,
\[
    \mf L([\wti \Phi_\Lambda|_{\partial \mk F_{1,k_0}}; \rel \partial \mk F_{1,k_0}]) =k_0\mc H^2(\Sigma_0).
\]
However, this contradicts with \eqref{eq:less than k0} and hence Theorem \ref{thm:two spheres} is proved.
\end{proof}

\appendix

\section{Smooth approximation of piecewise-smooth surfaces}
\label{app:approximation}

In this appendix, we describe how to approximate a family of piecewise smooth spheres by smooth spheres. We only need to do this in standard $\mb S^3\subset \mb R^4$. Our approach is to mollify the characteristic functions of the family of balls bounded by the piecewise smooth family; (see \cite{Giusti-bdd-variation}*{Section 1} for the mollification procedure for a single Caccioppoli set.) We will prove that $1/2$ is a regular value of the mollified functions by showing that the directional derivatives of certain appropriately chosen vector fields have uniform negative upper bound. Hence the $1/2$-level sets are the desired smooth families by the implicit function theorem. The key ingredient is that the dihedral angles at the singular sets are bounded away from 0.

We first recall the mollifier (see \cite{Giusti-bdd-variation}*{Section 1}) and provide some basic estimates (Lemma \ref{lem:bound integral of der} and Lemma \ref{lem:isoperi}) which will be used in the mollifications.
Let $\mc K:[0,+\infty)\to [0,1]$ be a smooth cutoff function such that $\mc K'\leq 0$ and
\[
    \mc K(s)= 1 \text{ on } [0,1/3];\quad 0\leq \mc K(s)\leq 1 \text{ on } [1/3,2/3];\quad \mc K(s)= 0 \text{ on } (2/3,+\infty).
\]
Then there exists a uniform constant $\mk L_0>0$ such that 
\begin{equation}\label{eq:def of L0}
    0\leq -\mc K'<\mk L_0.
\end{equation}

For any scaling factor $r>0$, define $\mc K_r(s):= \mc K(s/r)$ and $\wti {\mc K}_r= c_r\mc K_r$, where 
\[
    c_r:= \Big(\int_{\mb S^3} \mc K_r(|x-z|)\,\mr d\mc H^3(z)\Big)^{-1}, \quad x\in\mb S^3.
\]
For any $x\in \mb S^3$ and $r>0$, denote by 
\[  B_r(x):= \{z\in\mb S^3 : |z-x|<r\},\]
where $|\cdot|$ is the standard norm of vectors in $\mb R^4$.
Clearly, there exists a uniform constant $C$ such that for all $r\in(0,1)$ and $x\in \mb S^3$,
\[
    C^{-1}r^3<\mc H^3(B_r(x))< C r^3, \quad C^{-1} r^{-3} <  c_r < Cr^{-3}.
\]

\begin{lemma}\label{lem:bound integral of der}
There exists a uniform constant $\mk L_1>0$ such that for all $x\in \mb S^3$ and $r\in(0,1)$,
\[
    - r\int_{\mb S^3}\wti {\mc K}_r'(|z-x|)\,\mr d\mc H^3(z)< \mk L_1.
\]
\end{lemma}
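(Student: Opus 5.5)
We need to bound $-r\int_{\mb S^3}\wti{\mc K}_r'(|z-x|)\,\mr d\mc H^3(z)$. The plan is to unwind the definitions and combine the support and derivative bounds of $\mc K$ with the volume and normalization estimates stated just above. First we interpret $\wti{\mc K}_r'(s)$ as the derivative in $s$, namely $\frac{\mr d}{\mr ds}\big(c_r\mc K(s/r)\big)=\frac{c_r}{r}\mc K'(s/r)$. Since $\mc K'\le 0$, the integrand $-r\,\wti{\mc K}_r'(|z-x|)=-c_r\mc K'(|z-x|/r)$ is nonnegative. By \eqref{eq:def of L0} it is bounded above by $c_r\mk L_0$.

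Next we use the support. $\mc K'$ vanishes on $[0,1/3)\cup(2/3,\infty)$, so the integrand is supported in the shell $\{z:\ r/3\le |z-x|\le 2r/3\}\subset B_r(x)$. This gives
\[
-r\int_{\mb S^3}\wti{\mc K}_r'(|z-x|)\,\mr d\mc H^3(z)\le c_r\,\mk L_0\,\mc H^3(B_r(x)).
\]
We then insert the uniform estimates $c_r<Cr^{-3}$ and $\mc H^3(B_r(x))<Cr^3$, valid for $r\in(0,1)$ and all $x$. This bounds the right-hand side by $C^2\mk L_0$. Setting $\mk L_1:=C^2\mk L_0+1$ makes the inequality strict.

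There is no real obstacle here. The only points needing care are the convention for the derivative, so that the factor $1/r$ cancels the prefactor $r$, and the fact that $c_r$ is independent of $x$ by the rotational symmetry of $\mb S^3$. If a sharper constant were wanted, one could alternatively write the integral in polar form as $c_r\int \big(-\mc K'(s/r)\big)\,A_x(s)\,\mr ds$ with $A_x(s)\lesssim s^2$ and integrate by parts, but the crude bound above already suffices.
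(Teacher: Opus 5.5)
Your proposal is correct and follows the paper's proof: you bound $-\wti{\mc K}_r'(|z-x|)$ pointwise by $c_r\mk L_0 r^{-1}$, supported in $B_r(x)$, and then insert $c_r<Cr^{-3}$ and $\mc H^3(B_r(x))<Cr^3$. You also write out the final step giving $\mk L_1$ of order $C^2\mk L_0$, which the paper leaves implicit after reaching the bound $c_r\mk L_0\,\mc H^3(B_r(x))\,r^{-1}$.
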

\begin{proof}
Applying \eqref{eq:def of L0}, we have 
\begin{align*}
 -\int_{\mb S^3}\wti {\mc K}_r'(|z-x|)\,\mr d\mc H^3(z) &\leq c_r\int_{B_r(x)}-\mc K'\Big(\frac{|z-x|}{r}\Big)\cdot r^{-1}\,\mr d\mc H^3(z)\\
 &\leq c_r\mk L_0\mc H^3(B_r(x))\cdot r^{-1}.
\end{align*}    
\end{proof}

Let $\Omega\subset \mb S^3$ be a domain with Lipschitz boundary and $\chi_\Omega$ be the characteristic function of $\Omega$. Given $r>0$, define 
\[
    u_{\Omega,r}(x):=  \int_{\Omega} \wti {\mc K}_r(|z-x|)\,\mr d\mc H^3(z)=  \int_{\mb S^3} \chi_\Omega(z)\cdot \wti {\mc K}_r(|z-x|)\,\mr d\mc H^3(z).
\]

\begin{lemma}\label{lem:isoperi}
There exists a constant $\gamma_0>0$ such that if $r\in(0,1)$, $x\in \mb S^3$, and $\Omega\subset \mb S^3$ is a domain with a Lipschitz boundary satisfying 
\[
   u_{\Omega,r}(x)=  \int_{\Omega} \wti {\mc K}_r(|z-x|)\,\mr d\mc H^3(z)=\frac{1}{2},
\]
then
\[
    r\int_{\partial \Omega} \wti {\mc K}_r(|z-x|)\,\mr d\mc H^2(z) \geq \gamma_0.
\]
\end{lemma}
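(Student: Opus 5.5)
The idea is to convert the condition $u_{\Omega,r}(x)=\frac12$ into a relative isoperimetric statement in the ball $B_{2r/3}(x)$, the support of the kernel. Since $\wti{\mc K}_r$ is constant (equal to $c_r$) on $B_{r/3}(x)$ and vanishes outside $B_{2r/3}(x)$, rather than working directly with the weight I will use the coarea/layer-cake formula in the radial variable:
\[
  \int_{\Omega}\wti{\mc K}_r(|z-x|)\,\mr d\mc H^3(z) = \int_0^{2r/3} \big(-\wti{\mc K}_r'(s)\big)\,\mc H^3(\Omega\cap B_s(x))\,\mr ds ,
\]
and similarly $\int_{\partial\Omega}\wti{\mc K}_r\,\mr d\mc H^2 = \int_0^{2r/3}(-\wti{\mc K}_r'(s))\,\mc H^2(\partial\Omega\cap B_s(x))\,\mr ds$. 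The same identity applied to $\mb S^3\setminus \Omega$ yields $\int_0^{2r/3}(-\wti{\mc K}_r')\,\mc H^3(B_s\setminus\Omega)\,\mr ds=\frac12$. Here $-\wti{\mc K}_r'\,\mr ds$ is a probability measure on $[r/3,2r/3]$, because $\int \wti{\mc K}_r\,\mr d\mc H^3=1$ and the weights $\mc H^3(B_s)$ enter accordingly. More precisely, $\mu(\mr ds):=-\wti{\mc K}_r'(s)\mc H^3(B_s(x))\,\mr ds$ has total mass $1$, and the function $f(s):=\mc H^3(\Omega\cap B_s)/\mc H^3(B_s)$ has $\mu$-average exactly $\frac12$.

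Next I would use the local relative isoperimetric inequality in small balls of $\mb S^3$, with uniform constants for $s\in(0,1)$ (the same tool as \cite{Ritore2023}*{Lemma 1.46} used in Proposition \ref{prop:no flipping near a varifold}):
\[
  \mc H^2(\partial\Omega\cap B_s(x)) \geq c\,\min\{\mc H^3(\Omega\cap B_s),\mc H^3(B_s\setminus\Omega)\}^{2/3}.
\]
The point is to find a set of radii $s\in[r/3,2r/3]$ of $\mu$-measure bounded below on which $f(s)\in[\epsilon,1-\epsilon]$. Suppose instead that $f(s)<\epsilon$ or $f(s)>1-\epsilon$ for most $s$. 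By monotonicity of $s\mapsto\mc H^3(\Omega\cap B_s)$ and the comparability of $\mc H^3(B_{r/3})$ and $\mc H^3(B_{2r/3})$ (both are $\approx r^3$ up to the uniform constant $C$), $f$ cannot jump from below $\epsilon$ to above $1-\epsilon$ without passing through a middle regime. Precisely, for $s<s'$ in $[r/3,2r/3]$ we have $f(s')\geq f(s)\,\mc H^3(B_s)/\mc H^3(B_{s'})\geq f(s)/(8C^2)$, and the analogous bound holds for the complement. So if the average is $\frac12$, then either $f\in[\epsilon,1-\epsilon]$ on a $\mu$-substantial set, or both $\{f>1-\epsilon\}$ and $\{f<\epsilon\}$ carry substantial mass. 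The second alternative is incompatible with the comparability bound once $\epsilon\ll 1/(8C^2)$. This is the step I expect to be the main obstacle: getting a clean quantitative dichotomy with uniform constants. A simpler route I would try first is to use the single radius $s_\ast=2r/3$. At that radius, $\mc H^3(\Omega\cap B_{2r/3})\geq \int \wti{\mc K}_r\chi_\Omega\,/\sup\wti{\mc K}_r = \frac12/c_r\gtrsim r^3$, and the same holds for the complement. Hence $\min\{\mc H^3(\Omega\cap B_{2r/3}),\mc H^3(B_{2r/3}\setminus\Omega)\}\gtrsim r^3$, and the same lower bound holds at every $s\in[2r/3-\delta r, 2r/3]$ up to a loss of $C\delta r^3$, by monotonicity.

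Combining these, on an interval of radii of length $\approx r$ near $2r/3$ the relative isoperimetric inequality gives $\mc H^2(\partial\Omega\cap B_s)\gtrsim r^2$. The problem is that $-\wti{\mc K}_r'$ might vanish near $2r/3$, so this interval must carry $\mu$-mass. To fix this, I would replace $s_\ast$ by the radius $s_1<2r/3$ at which $\mc K$ drops below $\frac12$; then $\wti{\mc K}_r\geq \frac12 c_r$ on $B_{s_1}$ and $-\mc K'$ has a definite positive integral on $[s_1/r,2/3]$. Alternatively, I would bound $\int_{\partial\Omega}\wti{\mc K}_r\,\mr d\mc H^2\geq \frac{c_r}{2}\,\mc H^2(\partial\Omega\cap B_{s_1})$ directly and apply the isoperimetric inequality in $B_{s_1}$, using the mass estimates above with $s_1$ chosen so that $\mc K\geq\frac12$ on it. If the mass outside $B_{s_1}$ is too large to control, I would shrink to a radius where $\mc K\geq 1-\epsilon'$ (this is possible since $\mc K=1$ on $[0,1/3]$, and I may freely fix $\mc K$ to be close to $1$ up to near $2/3$, since $\mc K$ was chosen arbitrarily). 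Multiplying out, $r\cdot c_r\cdot r^2\gtrsim r\cdot r^{-3}\cdot r^2\cdot$const, which gives a scale-invariant constant $\gamma_0>0$, as claimed.
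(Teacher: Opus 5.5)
Your first, layer-cake route is correct, and the step you flagged as the main obstacle closes exactly, not merely up to ``substantial mass''. Suppose $s_a,s_b\in[r/3,2r/3]$ with $f(s_a)<\epsilon$ and $f(s_b)>1-\epsilon$. Your comparability bound gives $f(s_a)\ge(1-\epsilon)/(8C^2)$ when $s_a>s_b$. Applied to the complement, it gives $1-f(s_b)\ge(1-\epsilon)/(8C^2)$ when $s_a<s_b$. So once $\epsilon(1+8C^2)<1$, one of the sets $\{f<\epsilon\}$, $\{f>1-\epsilon\}$ is empty, and $\int f\,\mr d\mu=\frac12$ forces $\mu(\{\epsilon\le f\le 1-\epsilon\})\ge\frac12-\epsilon$. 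Write $-\wti{\mc K}_r'(s)\,\mr ds=\mc H^3(B_s(x))^{-1}\mu(\mr ds)$ and insert the relative isoperimetric inequality on that set of radii. This gives $\int_{\partial\Omega}\wti{\mc K}_r\,\mr d\mc H^2\ge c\,\epsilon^{2/3}(\frac12-\epsilon)\,r^{-1}$, uniformly in $r$ and $x$. This argument differs from the paper's: it averages the isoperimetric inequality over all radii with the probability measure $\mu$, so it never has to pick one radius at which the kernel is bounded below.

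The single-radius route you fall back on, and with which your write-up ends, does not close as written. At $s_\ast=2r/3$ the weight vanishes, as you note. At the radius $s_1$ where $\mc K$ falls to $\frac12$, the volume bounds do not transfer, because the kernel mass outside $B_{s_1}(x)$ can exceed $\frac12$. For example, $\mc K$ may drop to just below $\frac12$ right after $1/3$ and stay there until near $2/3$. Then an annular $\Omega$ with $u_{\Omega,r}(x)=\frac12$ misses $B_{s_1}(x)$ entirely, and $\mc H^2(\partial\Omega\cap B_{s_1}(x))=0$. Shrinking to a radius where $\mc K\ge 1-\epsilon'$ goes the wrong way, since it only increases the exterior mass. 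Redefining $\mc K$ proves the lemma for a different kernel, not the fixed one in the statement.

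The missing observation is that you do not need $\mc K\ge\frac12$ on the ball: any positive lower bound suffices, since it is a constant depending only on $\mc K$. This is exactly the paper's proof. Choose $s\in(1/3,2/3)$ close enough to the edge of the support that $\int_{B_{sr}(x)}\wti{\mc K}_r(|z-x|)\,\mr d\mc H^3(z)>\frac45$ uniformly in $r,x$, while still $\delta:=\mc K(s)>0$. Then $\Omega$ and $\mb S^3\setminus\Omega$ each carry kernel mass $>\frac{3}{10}$ inside $B_{sr}(x)$, hence volume $\ge\frac{3}{10}c_r^{-1}\gtrsim r^3$ there. The relative isoperimetric inequality gives $\mc H^2(\partial\Omega\cap B_{sr}(x))\gtrsim r^2$, and the bound $\wti{\mc K}_r\ge\delta c_r\gtrsim r^{-3}$ on $B_{sr}(x)$ finishes the proof.

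The paper's one-ball argument is shorter. Your averaged version avoids choosing $s$ and checking that choice is uniform in $r$, at the cost of the monotonicity bookkeeping.
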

\begin{proof}
Fix $x\in \mb S^3$. By the definition of $\wti{\mc K}_r$, there exists $s\in(1/3,2/3)$ such that 
\[
    1> \int_{B_{sr}(x)} \wti {\mc K}_r(|z-x|) \,\mr d\mc H^3(z)> \frac{4}{5}.
\]
It follows that 
\[
    0<\int_{\mb S^3\setminus B_{sr}(x)} \wti {\mc K}_r(|z-x|) \,\mr d\mc H^3(z)<\frac{1}{5}.
\]
Since $\mc K'\leq 0$ and $\int_{B_r(x)} \wti {\mc K}_r(|z-x|)\,\mr d\mc H^3(z)=1$, there exists $\delta>0$ such that
\[
    \wti {\mc K}_r(|z-x|) \geq \delta\cdot c_r>0 \text{ for all } z\in B_{sr}(x).
\]
By the hypothesis on $\Omega$, we obtain 
\[
    \int_{\Omega\cap B_{sr}(x)}\wti{\mc K}_r(|z-x|)\,\mr d\mc H^3(z)\geq \int_{\Omega}\wti{\mc K}_r(|z-x|)\,\mr d\mc H^3(z)- \int_{\mb S^3\setminus  B_{sr}(x)}\wti{\mc K}_r(|z-x|)\,\mr d\mc H^3(z)> \frac{1}{2}-\frac{1}{5}=\frac{3}{10}.
\]
This, together with the bound $\wti {\mc K}_r\leq c_r$, implies that
\[
    c_r \cdot \mc H^3(\Omega\cap B_{sr}(x)) \geq \frac{3}{10}.
\]
Similarly, for the complement we have
\[
    \int_{B_{sr}(x)\setminus \Omega} \wti{\mc K}_r(|z-x|)\,\mr d\mc H^3(z) \geq \int_{B_{sr}(x)} \wti{\mc K}_r(|z-x|)\,\mr d\mc H^3(z) - \int_{ \Omega} \wti{\mc K}_r(|z-x|)\,\mr d\mc H^3(z) > \frac{4}{5}-\frac{1}{2}=\frac{3}{10}.
\]
It follows that 
\[
    c_r \cdot \mc H^3(B_{sr}(x)\setminus \Omega)\geq \frac{3}{10}.
\]
Consider the rescaled metric spaces $(B_{sr}(x),r^{-2}g_{\mb S^3})$. As such spaces converge to a Euclidean ball in $\mb R^3$ as $r\to 0$, the isoperimetric inequality in $\mb R^3$ yields a uniform constant $\mk c_1$ (independent of $r\in(0,1)$ and $x\in \mb S^3$) such that 
\[
    \mc H^2(\partial \Omega\cap B_{sr}(x))\geq \mk c_1r^2.
\]
Consequently,
\[
    \int_{\partial \Omega} \wti{\mc K}_r(|z-x|)\,\mr d \mc H^2(z) \geq  \mk c_1r^2 \cdot \delta \cdot c_r\geq \mk c_1\cdot \delta\cdot C^{-1} \cdot r^{-1}.
\]
This completes the proof of Lemma \ref{lem:isoperi}.
\end{proof}

Let ${\bm a}\in\mb S^3$ and $V_{\bm a}$ be the projection to $T\mb S^3$:
\[
  V_{\bm a}(x):= \bm a - \langle x,\bm a\rangle x.
\]
We now introduce a general formula for the directional derivative of $u_{\Omega, r}$ along $V_{\bm a}$.

\begin{lemma}\label{lem:derivative of u}
Let $\Omega\subset \mb S^3$ be a domain with Lipschitz boundary. Suppose that $\bm a\in \mb S^3$, $r>0$, $\langle x_0,\bm a\rangle=0$, $x\in B_r(x_0)$. Then 
\[
    \langle  V_{\bm a}(x),\nabla u_{\Omega,r}(x)\rangle \leq -\int_{ B_r(x)\cap \partial \Omega} \wti {\mc K}_r(|z-x|) \langle V_{\bm a}(z), \mf n\rangle \,\mr d\mc H^2(z) + 6r + \frac{3}{2} \mk L_1\cdot r,
\]
where $\mf n$ is the unit outward normal vector field of $\Omega$.
\end{lemma}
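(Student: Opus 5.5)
We differentiate under the integral sign and then move the derivative from the kernel onto the set $\Omega$ by the divergence theorem. The kernel depends on $x$ only through $|z-x|$, so
\[
\nabla_x \wti{\mc K}_r(|z-x|)=\wti{\mc K}_r'(|z-x|)\,\frac{x-z}{|z-x|},
\]
projected to $T_x\mb S^3$. Hence
\[
\langle V_{\bm a}(x),\nabla u_{\Omega,r}(x)\rangle=\int_\Omega \wti{\mc K}_r'(|z-x|)\,\Big\langle V_{\bm a}(x),\frac{x-z}{|x-z|}\Big\rangle\,\mr d\mc H^3(z).
\]
Next we use the $z$-derivative of the kernel,
\[
\nabla_z \wti{\mc K}_r(|z-x|)=\wti{\mc K}_r'(|z-x|)\,\mr{proj}_{T_z}\frac{z-x}{|z-x|},
\]
to rewrite the integrand as $-\langle V_{\bm a}(z),\nabla_z\wti{\mc K}_r(|z-x|)\rangle$ plus an error term. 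The error comes from two sources. First, we replace $V_{\bm a}(x)$ by $V_{\bm a}(z)$, which costs at most $|V_{\bm a}(x)-V_{\bm a}(z)|\le C|x-z|\le Cr$. Second, the projection of $\frac{z-x}{|z-x|}$ to $T_z\mb S^3$ versus its projection to $T_x\mb S^3$ differs by $O(|z-x|)$. Indeed, $\langle z-x, z\rangle=1-\langle x,z\rangle=\tfrac12|z-x|^2$, so the normal component of $\frac{z-x}{|z-x|}$ has size $\tfrac12|z-x|\le r/2$.

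Both error terms are multiplied by $|\wti{\mc K}_r'|$ and integrated. By Lemma \ref{lem:bound integral of der}, $\int|\wti{\mc K}_r'|\le \mk L_1/r$, so each error contributes a bounded multiple of $r\cdot \mk L_1/r$. That is too crude to give $O(r)$. So I will bound the errors more sharply: $|V_{\bm a}(x)-V_{\bm a}(z)|\le 3|x-z|$, and combined with the $\wti{\mc K}_r'$ weight this still gives order $\mk L_1$, not $\mk L_1 r$. Therefore I expect the intended bookkeeping to be the following. The angle error between the two projections is $O(|z-x|)\cdot|\wti{\mc K}_r'|\cdot O(r)$; with $|z-x|<r$ this gives $\tfrac32\mk L_1 r$ after using $\int r|\wti{\mc K}_r'|<\mk L_1$ and one extra factor $r$ coming from $|x-z|\cdot|$normal part$|$. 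The comparison $V_{\bm a}(x)$ versus $V_{\bm a}(z)$ instead gets absorbed after the divergence theorem.

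Concretely, the plan is to write
\[
\int_\Omega -\langle V_{\bm a}(z),\nabla_z \wti{\mc K}_r\rangle
=-\int_{\partial\Omega}\wti{\mc K}_r\langle V_{\bm a},\mf n\rangle
+\int_\Omega \wti{\mc K}_r\,\mr{div}_{\mb S^3}V_{\bm a}.
\]
Since $\mr{div}_{\mb S^3}V_{\bm a}(z)=-3\langle z,\bm a\rangle$, the hypotheses $\langle x_0,\bm a\rangle=0$ and $z\in B_{r}(x)\subset B_{2r}(x_0)$ give $|\langle z,\bm a\rangle|\le 2r$. With $\int\wti{\mc K}_r=1$, this term is at most $6r$, which explains the $6r$ in the statement.

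The remaining work, which I expect to be the main obstacle, is to show that the discrepancy between $V_{\bm a}(x)$ paired with the $x$-gradient and $V_{\bm a}(z)$ paired with the $z$-gradient is at most $\tfrac32\mk L_1 r$. Write $w=z-x$. Then $\langle V_{\bm a}(x),w\rangle-\langle V_{\bm a}(z),w\rangle=\langle x,\bm a\rangle\langle x,w\rangle-\langle z,\bm a\rangle\langle z,w\rangle$. Using $\langle x,w\rangle=-\tfrac12|w|^2$ and $\langle z,w\rangle=\tfrac12|w|^2$, this equals $-\tfrac12|w|^2(\langle x,\bm a\rangle+\langle z,\bm a\rangle)$. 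Its absolute value is at most $\tfrac12|w|^2\cdot 3r\le\tfrac32 r|w|$ (using $|\langle x,\bm a\rangle|\le r$ and $|\langle z,\bm a\rangle|\le 2r$). Dividing by $|w|$ and integrating against $|\wti{\mc K}_r'|$, we get at most $\tfrac32 r\cdot r\int|\wti{\mc K}_r'|\le\tfrac32\mk L_1 r$ by Lemma \ref{lem:bound integral of der}.

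This identity-based computation is the step I would verify most carefully, including the sign conventions and the fact that the discrepancy contributes only through $|\wti{\mc K}_r'|$, so that an upper bound suffices. Finally, we restrict the boundary integral to $B_r(x)\cap\partial\Omega$, since the kernel vanishes outside that ball.
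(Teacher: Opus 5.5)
Your proposal is correct and is essentially the paper's proof. You differentiate under the integral, then trade $V_{\bm a}(x)$ for $V_{\bm a}(z)$ at a pointwise cost of $\frac{3}{2}r^2|\wti{\mc K}_r'(|z-x|)|$ via $\langle y,y-z\rangle=\frac12|y-z|^2$ and $|\langle \bm a,z\rangle|\le |z-x_0|$, and finally apply the divergence theorem with $\operatorname{div}_{\mb S^3}V_{\bm a}=-3\langle \bm a,z\rangle$ (giving $6r$) and Lemma \ref{lem:bound integral of der} (giving $\frac32\mk L_1 r$).

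There are two slips, both harmless. First, the difference is actually $\langle V_{\bm a}(x)-V_{\bm a}(z),w\rangle=\frac12|w|^2(\langle x,\bm a\rangle+\langle z,\bm a\rangle)$, with the opposite sign to yours, which does not matter since you take absolute values. Second, the intermediate bound should read $\frac32 r|w|^2$ rather than $\frac32 r|w|$; the former is what your final estimate $\frac32 r\cdot r\int|\wti{\mc K}_r'|$ actually uses.
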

\begin{proof}
    Note that for all $y,z\in \mb S^3$,
\[
    | \langle \bm a,z \rangle | = |\langle \bm a ,z-x_0\rangle | \leq |z-x_0|, \quad \text{and} \quad \frac{|\langle y,y-z\rangle|}{|z-y|}=\frac{1}{2}|y-z|.
\]
For any $z\in B_r(x)\setminus \{x\}$, we have
\begin{align*}
 \frac{\langle x-z,V_{\bm a}(x)-V_{\bm a}(z)\rangle}{|z-x|} &\leq |\langle \bm a,x \rangle| \frac{|\langle x,x-z\rangle|}{|z-x|} +|\langle \bm a,z\rangle| \frac{|\langle z,x-z\rangle|}{|z-x|}\\
 &\leq \frac{1}{2}|x-x_0|\cdot |x-z| +  \frac{1}{2}|z-x_0|\cdot |x-z|\\
 &\leq \frac{3}{2}r^2.
\end{align*}
We now compute the directional derivative of $u_{\Omega,r}$ along $V_{\bm a}(x)$:
\begin{align*}
 \langle  V(x),\nabla u_{\Omega,r}(x)\rangle &= \int_{ B_r(x)} \chi_\Omega(z) \wti{\mc K}_r'(|z-x|) \cdot \frac{\langle x-z,V_{\bm a}(x)\rangle}{|z-x|}\,\mr d\mc H^3(z)\\
 &\leq  \int_{ B_r(x)\cap \Omega} \Big( \wti{\mc K}_r'(|z-x|) \cdot \frac{\langle x-z,V_{\bm a}(z)\rangle}{|z-x|} - \frac{3}{2} \wti {\mc K}'_r(|z-x|) \cdot r^2 \Big)\,\mr d\mc H^3(z)\\
 &=\int_{ B_r(x)\cap \Omega} \Big( -\nabla_{V_{\bm a} (z)}\wti{\mc K}_r(|z-x|) - \frac{3}{2} \wti {\mc K}'_r(|z-x|) \cdot r^2 \Big)\,\mr d\mc H^3(z).
\end{align*}
Applying the divergence theorem, we obtain
\begin{align*}
&\quad \int_{ B_r(x)\cap \Omega} -\nabla_{V_{\bm a}(z)}\wti{\mc K}_r(|z-x|)\,\mr d\mc H^3(z)\\
&=\int_{B_r(x)\cap \Omega} \Big( - \operatorname{div} \big(\wti{\mc K}_r(|z-x|)\cdot  V_{\bm a}(z)\big) + \wti{\mc K}_r(|z-x|) \operatorname{div} V_{\bm a}(z) \Big)\,\mr d\mc H^3(z)\\
 &= -\int_{ B_r(x)\cap \partial \Omega} \wti {\mc K}_r(|z-x|) \langle V_{\bm a}(z), \mf n\rangle \,\mr d\mc H^2(z)+\int_{ B_r(x)\cap \Omega} -3\langle \bm a,z\rangle \wti {\mc K}_r(|z-x|) \,\mr d\mc H^3(z)\\
 &\leq -\int_{ B_r(x)\cap \partial \Omega} \wti {\mc K}_r(|z-x|) \langle V_{\bm a}(z), \mf n\rangle \,\mr d\mc H^2(z) + 6r,
\end{align*}
where we have used $\operatorname{div}_{\mb S^3} V_{\bm a}(z) = -3\langle \bm a,z\rangle$ and $|\langle \bm a,z\rangle|\leq |z-x_0|\leq 2r$. Furthermore, by Lemma \ref{lem:bound integral of der}, we deduce that 
 \begin{align*}
 \int_{ B_r(x)\cap \Omega}  - \wti {\mc K}'_r(|z-x|) \cdot r^2 \,\mr d\mc H^3(z)\leq \mk L_1\cdot r.
\end{align*}
Combining these estimates yields
\[
    \langle  V_{\bm a}(x),\nabla u_{\Omega,r}(x)\rangle \leq -\int_{ B_r(x)\cap \partial \Omega} \wti {\mc K}_r(|z-x|) \langle V_{\bm a}(z), \mf n\rangle \,\mr d\mc H^2(z) + 6r + \frac{3}{2} \mk L_1\cdot r.
\]
This finishes the proof of Lemma \ref{lem:derivative of u}.
\end{proof}

Now we are ready to prove the mollification result.
\begin{proposition}\label{prop:mollification}
Let $X$ be a cubical complex and $\{\Phi(t)\}_{t\in X}$ be a family of open balls such that their boundaries $\partial \Phi(t)$ are piecewise smoothly embedded spheres in $\mb S^3$. Suppose that the singular set $\{\mc S_t\subset\partial \Phi(t)\}_{t\in X}$ forms a smooth family of pairwise disjoint, smoothly embedded circles, and that $\{\partial \Phi(t)\setminus \mc S_t\}_{t\in X}$ varies continuously in the $C^\infty$-topology. Furthermore, assume that for each $t\in X$, within a small tubular neighborhood of $\mc S_t$, $\partial \Phi(t)$ consists of exactly two smoothly embedded surfaces meeting along $\mc S_t$ with an outer angle $\alpha_t\in (\theta, 2\pi-\theta)$ for some fixed $\theta > 0$.

Then, for any $\epsilon>0$, there exists a family $\{ \wti \Phi(t)\}_{t\in X}$ of open balls bounded by smoothly embedded spheres such that 
\[
    \ms F\big(\wti \Phi(t),\Phi(t)\big) <\epsilon.
\]
\end{proposition}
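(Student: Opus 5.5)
The plan is to mollify the characteristic functions. For $r>0$ small we set $u_t:=u_{\Phi(t),r}$ and take $\wti\Phi(t):=\{u_t>\tfrac12\}$. Since $t\mapsto\chi_{\Phi(t)}$ is continuous in $L^1$ and $u_{\Omega,r}$ is a convolution with the smooth kernel $\wti{\mc K}_r$, the functions $u_t$ are smooth in $x$ and vary continuously in $C^\infty$ with $t$. The main task is to show that $\tfrac12$ is a regular value of $u_t$, uniformly in $t\in X$, once $r$ is small enough. Given that, the implicit function theorem shows $\partial\wti\Phi(t)=\{u_t=\tfrac12\}$ is a smooth closed surface varying continuously in $t$. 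Because $u_t$ is $\approx 1$ deep inside $\Phi(t)$ and $\approx 0$ outside, the level set lies in the $r$-neighborhood of $\partial\Phi(t)$.

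To prove regularity, fix $x$ with $u_t(x)=\tfrac12$; then $\mr{dist}(x,\partial\Phi(t))<r$. We choose a nearby point $x_0\in\partial\Phi(t)$ and a unit vector $\bm a$ with $\langle x_0,\bm a\rangle=0$, and apply Lemma \ref{lem:derivative of u}. This gives
\[
\langle V_{\bm a}(x),\nabla u_t(x)\rangle\le -\int_{B_r(x)\cap\partial\Phi(t)}\wti{\mc K}_r\langle V_{\bm a},\mf n\rangle + Cr .
\]
It then suffices to show the integral is at least $c\,r^{-1}$ for a uniform $c>0$, so that the right side is negative. Lemma \ref{lem:isoperi} gives $r\int_{\partial\Phi(t)}\wti{\mc K}_r\ge\gamma_0$, so the integral has the right size. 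We still need $\langle V_{\bm a},\mf n\rangle\ge c'>0$ on $B_r(x)\cap\partial\Phi(t)$. We split into two cases according to the distance from $x$ to the singular circles $\mc S_t$.

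\begin{enumerate}
\item If $\mr{dist}(x,\mc S_t)\ge Kr$, then $\partial\Phi(t)\cap B_r(x)$ is a single smooth sheet with uniformly bounded curvature. This holds uniformly by compactness of $X$ and the $C^\infty$-continuity of the family, which also gives a positive lower bound on the distance between distinct sheets and between distinct circles. We take $\bm a=\mf n(x_0)$. Then $\langle V_{\bm a},\mf n\rangle\ge 1-O(r)$ on the ball.
\item If $\mr{dist}(x,\mc S_t)<Kr$, then near $x$ the boundary consists of two smooth sheets meeting along $\mc S_t$ at outer angle $\alpha_t\in(\theta,2\pi-\theta)$. Blowing up at scale $r$, the picture is $C^1$-close to a wedge with straight edge. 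We take $\bm a$ to be the unit bisector of the two outward normals at the edge point. Both normals then make angle at most $\tfrac{\pi}{2}-\tfrac{\theta}{2}$ with $\bm a$, so $\langle \bm a,\mf n\rangle\ge\sin(\theta/2)-o(1)$ on both sheets inside $B_r(x)$. Here $x_0$ is the edge point, and $\bm a$ is tangent to $\mb S^3$ there, so $\langle x_0,\bm a\rangle=0$.
\end{enumerate}

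In both cases we obtain $\langle V_{\bm a}(x),\nabla u_t(x)\rangle\le -c\gamma_0 r^{-1}+Cr<0$ for $r<r_0$, with $r_0$ uniform in $t$ by compactness of $X$.

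\textbf{Main obstacle.} The hardest step is the uniformity of the wedge case. The issue is that $B_r(x)$ may reach distance about $Kr$ from the edge, which is where the two cases hand off. I will handle this by choosing $K$ large, fixed, so that the smooth case applies with $\bm a=\mf n(x_0)$ whenever the edge is more than $Kr$ away. In the complementary regime, $C^1$-closeness to the model wedge at scale $Kr$ holds uniformly by the smoothness of $\mc S_t$ and of the sheets up to the edge.

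\textbf{Topology and closeness.} For small $r$ the level set is a normal graph over $\partial\Phi(t)$ away from $\mc S_t$ and rounds the edge near it, so it is isotopic to $\partial\Phi(t)$ and hence an embedded sphere bounding a ball. Finally, $\mc F(\wti\Phi(t),\Phi(t))\le\mc H^3(\text{$r$-neighborhood of }\partial\Phi(t))\to0$. For the varifold part, $\mc H^2(\{u_t=\tfrac12\})\to\mc H^2(\partial\Phi(t))$, and the tangent planes converge away from a set of small area near $\mc S_t$. This gives $\mf F$-closeness, uniformly in $t$, so $\ms F(\wti\Phi(t),\Phi(t))<\epsilon$ for $r$ small.
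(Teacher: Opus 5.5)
Your proposal follows essentially the same route as the paper: mollify $\chi_{\Phi(t)}$, take $\{u_{t,r}>\tfrac12\}$, and show $\tfrac12$ is a regular value via Lemma \ref{lem:derivative of u} and Lemma \ref{lem:isoperi}. Near $\mc S_t$ you use the bisector of the two outward normals (the paper's $\nu_{x_t}$), and away from it you use $\bm a=\mf n(x_0)$. The only real difference is your case threshold $\operatorname{dist}(x,\mc S_t)<Kr$, with $K$ large depending on $\theta$. This costs nothing: Lemma \ref{lem:derivative of u} is then applied with $|x-x_0|<Kr$, which only changes the error term to $O(Kr)$. In exchange, it handles the hand-off between the wedge and smooth regimes somewhat more explicitly than the paper's $2r$ split.
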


\begin{proof}
Let $\chi_t:=\chi_{\Phi(t)}$ be the characteristic function of $\Phi(t)$. Given $t\in X$ and $r>0$, define the mollified function $u_{t,r}:\mb S^3\to [0,1]$ by 
\[
    u_{t,r}(x):= \int_{\mb S^3} \chi_t(z) \wti {\mc K}_r(|z-x|)\,\mr d\mc H^3(z).
\]
Clearly, for $x\in \mb S^3$ with $\operatorname{dist}_{\mb R^4}(x,\partial \Phi(t))>r$, we have:
\[
     \text{$u_{t,r}(x)=1$ if $x\in \Phi(t)$, \quad $u_{t,r}(x)=0$ if $x\notin\Phi(t)$.}
\]

Recall that for $t\in X$, $x_t\in \mc S_t$, and sufficiently small $r>0$, the intersection $B_r(\mc S_t)\cap \partial \Phi(t)$ consists of two embedded surfaces (with outward unit normals $\mf n_1$ and $\mf n_2$) separated by $\mc S_t$. Denote by $\nu_{x_t}$ the unit vector satisfying the following conditions:
\begin{itemize}
    \item $\nu_{x_t}$ is normal to $\mc S_t$;
    \item the angle between $\nu_{x_t}$ and $\mf n_1$ is equal to the angle between $\nu_{x_t}$ and $\mf n_2$ at $x_t$;
    \item $\nu_{x_t}$ points away from $\Phi(t)$.
\end{itemize}
For $x\in \Phi(t)\setminus \mc S_t$, let $\mf n$ denote the unit outward normal vector of $\partial \Phi(t)$.

Given $x\in \mb S^3$ and a unit vector $v\in \mb S^3$, denote by $\Gamma(x,v)$ the great sphere in $\mb S^3$ that contains $x$ and is normal to $v$. By the compactness of $X$, we can choose $r_0>0$ and a constant $\alpha_0\in(0,1/2)$ such that:
\begin{enumerate}[label=(\roman*)]
\item for all $t\in X$ and $x_t\in \mc S_t$, $\partial \Phi(t)\cap B_{2r_0}(x_t)$ is a Lipschitz graph over a domain of $\Gamma(x_t,\nu_{x_t})$, and 
\begin{equation}\label{eq:alpha0 lower bound}
    \nu_{x_{t}} \cdot \mf n_i(z) > \alpha_0>0, \quad \forall i=1,2 \text{ and } z\in B_{2r_0}(x_t)\cap\partial \Phi(t),
\end{equation}
where $\mf n_i$ and $\nu_{x_t}$ are treated as vectors in $\mb R^4$;

\item for all $x\in \partial \Phi(t)\setminus B_{r_0}(\mc S_t)$, $\partial \Phi(t) \cap B_{r_0}(x)$ is a graph over a domain of $\Gamma(x,\mf n(x))$, and 
\begin{equation}\label{eq:1/2 lower bound}
   |\mf n(x)-\mf n(z)| \leq C|x-z|,\quad  \mf n(x) \cdot \mf n(z) \geq \frac{1}{2}, \quad \forall z\in B_{r_0}(x),
\end{equation}
where $\mf n(x)$ and $\mf n(z)$ are treated as vectors in $\mb R^4$, $C$ is the constant depending on the upper bound of the second fundamental forms of $\Phi(t)\setminus \mc S_t$ (which is uniformly bounded);

\item for all $t\in X, r\in(0,2r_0)$ and $x\in \partial \Phi(t)$,
\begin{equation}\label{eq:density upper bound}
    \mc H^2( B_r(x)\cap \partial \Phi(t) ) \leq C_1r^2.
\end{equation}
\end{enumerate}

\begin{claim}
There exists a constant $r_1\in (0,r_0/2)$ such that for all $r\in(0,r_1)$, $x\in \mb S^3$, and $t\in X$ satisfying $u_{t,r}(x)=1/2$, we have
\[
    \nabla u_{t,r}(x)\neq 0.
\]
\end{claim}

\begin{proof}[Proof of Claim]
Suppose that $\operatorname{dist}_{\mb R^4}(x, \partial \Phi(t))\geq r$. Then $u_{t,r}(x)$ must be either $0$ or $1$, which contradicts the assumption that $u_{t,r}(x)=1/2$. Hence, we divide the proof into two cases.

\smallskip

{\bf Case I: There exists $x_t\in \mc S_t$ such that $|x_t-x|<2r$}. Recall that
\[
    V_{\nu_{x_t}}(z):= \nu_{x_t}- \langle \nu_{x_t},z\rangle z, \quad \text{for all } z\in \mb S^3.
\]
By Lemma \ref{lem:derivative of u} 
\begin{align*}
 \langle  V_{\nu_{x_t}}(x),\nabla u_{t,r}(x)\rangle \leq -\int_{ B_r(x)\cap \partial \Phi(t)} \wti {\mc K}_r(|z-x|) \langle V_{\nu_{x_t}}(z), \mf n\rangle \,\mr d\mc H^2(z) + 6r + \frac{3}{2} \mk L_1\cdot r.
 \end{align*}
Plugging Lemma \ref{lem:isoperi} and  \eqref{eq:alpha0 lower bound} into it, we obtain
\begin{equation}\label{eq:derivative<0 in case1}
     \langle  V_{\nu_{x_t}}(x),\nabla u_{t,r}(x)\rangle \leq - \alpha_0 \gamma_0\cdot r^{-1} +(\frac{3}{2}\mk L_1+6) r<0
\end{equation}
by choosing $r_1<\sqrt{\frac{\alpha_0\gamma_0}{3\mk L_1/2+6}}$.
 
\medskip
{\bf Case II: $\operatorname{dist}_{\mb R^4}(x,\mc S_t)\geq 2r$ and there exists $x_0\in \partial \Phi(t)\setminus B_{r}(\mc S_t)$ such that $|x-x_0|<r$}.
Recall that 
\[     V_{\mf n(x_0)}(z):= \mf n(x_0) - \langle \mf n(x_0), z\rangle z, \quad \text{ for all $z\in \mb S^3$}.
\]
By Lemma \ref{lem:derivative of u} 
\begin{align*}
  \langle V_{\mf n(x_0)}(x),\nabla u_{t,r}(x)\rangle 
  \leq -\int_{ B_r(x)\cap \partial \Phi(t)} \wti {\mc K}_r(|z-x|) \langle V_{\mf n(x_0)}(z), \mf n\rangle \,\mr d\mc H^2(z) 
  + 6r + \frac{3}{2} \mk L_1\cdot r.
 \end{align*}
Plugging Lemma \ref{lem:isoperi} and  \eqref{eq:1/2 lower bound} into it, we obtain
\begin{equation}\label{eq:derivative<0 in case 2}
    \langle V_{\mf n(x_0)}(x),\nabla u_{t,r}(x)\rangle \leq -\frac{1}{2} \cdot \gamma_0 \cdot r^{-1} + 6r + \frac{3}{2}\mk L_1\cdot r < 0
\end{equation}
provided that $r_1<\sqrt{\frac{\gamma_0}{3\mk L_1+12}}$ and $r\in(0,r_1)$. This finishes the proof of Claim.
\end{proof}

Now we define 
\[
    \wti \Phi_r(t):= \{x\in \mb S^3 : u_{t,r}(x)>1/2\},\quad \forall t\in X.
\]
We note that by \eqref{eq:derivative<0 in case1} and \eqref{eq:derivative<0 in case 2},
\begin{itemize}
    \item in Case I, $\{x\in \mb S^3 : u_{t,r}(x)=1/2\}\cap B_{2r}(x_t)$ is a graph over a domain of $\Gamma(x_t,\nu_{x_t})$;
    \item in Case II, $\{x\in \mb S^3 : u_{t,r}(x)=1/2\}\cap B_{r}(x_0)$ is a graph over a domain of $\Gamma(x_0,\mf n(x_0))$. 
\end{itemize}
Hence, $\partial \wti  \Phi_r(t)$ forms a smoothly embedded sphere in $\mb S^3$. 
By the continuity of $u_{t,r}$ as $t$ varies, we conclude that $\{ \wti \Phi_r(t) \}_{t\in X}$ is a family of open balls bounded by smoothly embedded spheres. 

It remains to consider the varifold distance between $\partial \wti \Phi_r(t)$ and $\partial\Phi(t)$. Let $\bm a\in \mb S^3$ be such that $\langle \bm a ,x_0\rangle =0$ and $\langle \bm a ,\mf n(x_0)\rangle =0$. Note that by \eqref{eq:1/2 lower bound},
\[
   | \langle V_{\bm a}(z),\mf n(z)\rangle| = |\langle \bm a,\mf n(z)-\mf n(x_0)\rangle |\leq C |z-x_0|.
\]
By Lemma \ref{lem:derivative of u}, if $x\in B_r(x_0)$, we have
\begin{align*}
  \langle V_{\bm a}(x),\nabla u_{t,r}(x)\rangle 
  &\leq -\int_{ B_r(x)\cap \partial \Phi(t)} \wti {\mc K}_r(|z-x|) \langle V_{\bm a}(z), \mf n\rangle \,\mr d\mc H^2(z) 
  + 6r + \frac{3}{2} \mk L_1\cdot r\\
  &\leq 2C r \cdot c_r\cdot C_1 r^2 + 6r + \frac{3}{2} \mk L_1\cdot r,
 \end{align*}
 where we used \eqref{eq:density upper bound} in the last inequality.
Replacing $\bm a$ by $-\bm a$, we conclude that 
\[
    |\langle V_{\bm a}(x),\nabla u_{t,r}(x)\rangle | \leq C_2,
\]
where $C_2$ is a uniform constant depending on $C, C_1, \gamma_0,$ and $\mk L_1$. This together with \eqref{eq:derivative<0 in case 2} and the implicit function theorem implies that as $r\to 0$, $\partial \wti \Phi_r(t)$ converges to $\partial \Phi(t)$ in the Lipschitz sense away from $\mc S_t$. Combining with the structure of $\partial \wti \Phi_r(t)$ around $\mc S_t$, we ensure that it converges to $\partial \Phi(t)$ in the sense of varifolds. By the compactness of $X$, we can choose $r$ small enough such that 
\[
    \ms F \big(\wti \Phi_r(t),\Phi(t)\big)<\epsilon.
\]
This completes the proof of Proposition \ref{prop:mollification}.
\end{proof}

\begin{remark}\label{rmk:unifor perturb}
Recall that in the construction of $\Phi_\Lambda$ in Definition \ref{def:Phi_Lambda}, the procedure involves removing disks from $\Upsilon(x_i)$ and $\Upsilon(x_{i+1})$, and subsequently attaching a cylindrical neck to join the remaining components. Since the deformation retraction $\mc D$ remains strictly transverse to $\Upsilon(t)$, the attached neck intersects $\Upsilon(t)$ at an angle strictly bounded away from $0$ and $\pi$. Consequently, for every $x \in \Delta_k \setminus \partial \Delta_k$, the surface $\Phi_\Lambda(x)$ satisfies the required angle condition in Proposition \ref{prop:mollification}. By compactness and continuity, this allows us to extract a uniform bound $\theta > 0$ when analyzing $\Phi_\Lambda$ over the domain $\Delta_k^{\nu}$ in Section \ref{sec:smooth family}.
\end{remark}

\bibliographystyle{amsalpha}
\bibliography{minmax}
\end{document}

%% file: spheres.eps_tex
\begingroup%
  \makeatletter%
  \providecommand\color[2][]{%
    \errmessage{(Inkscape) Color is used for the text in Inkscape, but the package 'color.sty' is not loaded}%
    \renewcommand\color[2][]{}%
  }%
  \providecommand\transparent[1]{%
    \errmessage{(Inkscape) Transparency is used (non-zero) for the text in Inkscape, but the package 'transparent.sty' is not loaded}%
    \renewcommand\transparent[1]{}%
  }%
  \providecommand\rotatebox[2]{#2}%
  \newcommand*\fsize{\dimexpr\f@size pt\relax}%
  \newcommand*\lineheight[1]{\fontsize{\fsize}{#1\fsize}\selectfont}%
  \ifx\svgwidth\undefined%
    \setlength{\unitlength}{362.83464567bp}%
    \ifx\svgscale\undefined%
      \relax%
    \else%
      \setlength{\unitlength}{\unitlength * \real{\svgscale}}%
    \fi%
  \else%
    \setlength{\unitlength}{\svgwidth}%
  \fi%
  \global\let\svgwidth\undefined%
  \global\let\svgscale\undefined%
  \makeatother%
  \begin{picture}(1,0.6953125)%
    \lineheight{1}%
    \setlength\tabcolsep{0pt}%
    \put(0,0){\includegraphics[width=\unitlength]{spheres.eps}}%
    \put(0.0781606,0.16982721){\color[rgb]{0,0,0}\makebox(0,0)[lt]{\lineheight{1.25}\smash{\begin{tabular}[t]{l}$\xi_1$\end{tabular}}}}%
    \put(0.88098834,0.28168273){\color[rgb]{0,0,0}\makebox(0,0)[lt]{\lineheight{1.25}\smash{\begin{tabular}[t]{l}$\xi_2$\end{tabular}}}}%
    \put(0.05563137,0.40214894){\color[rgb]{0,0,0}\makebox(0,0)[lt]{\lineheight{1.25}\smash{\begin{tabular}[t]{l}$\xi_3$\end{tabular}}}}%
    \put(0.40652257,0.08052657){\color[rgb]{0,0,0}\makebox(0,0)[lt]{\lineheight{1.25}\smash{\begin{tabular}[t]{l}$\Upsilon(x_1)\cap \mathcal D(\xi_1)$\end{tabular}}}}%
    \put(0.33370809,0.22369665){\color[rgb]{0,0,0}\makebox(0,0)[lt]{\lineheight{1.25}\smash{\begin{tabular}[t]{l}$\Upsilon(x_2)\cap \mathcal D(\xi_1)\setminus \mathcal D(1-\xi_2)$\end{tabular}}}}%
    \put(0.31639142,0.32929913){\color[rgb]{0,0,0}\makebox(0,0)[lt]{\lineheight{1.25}\smash{\begin{tabular}[t]{l}$\Upsilon(x_3)\cap \mathcal D(\xi_3)\setminus \mathcal D(1-\xi_2)$\end{tabular}}}}%
    \put(0.28628176,0.56124778){\color[rgb]{0,0,0}\makebox(0,0)[lt]{\lineheight{1.25}\smash{\begin{tabular}[t]{l}$\partial \mathcal{D}(\xi_3)\cap (\Omega_{x_{4}}\setminus\oli{\Omega_{x_3}})$\end{tabular}}}}%
    \put(0.46246325,0.44313357){\color[rgb]{0,0,0}\makebox(0,0)[lt]{\lineheight{1.25}\smash{\begin{tabular}[t]{l}$\partial \mathcal{D}(1-\xi_2)\cap (\Omega_{x_{3}}\setminus\oli{\Omega_{x_2}})$\end{tabular}}}}%
  \end{picture}%
\endgroup%

%% file: two_minimal_spheres.bbl
\begin{bibdiv}
\begin{biblist}

\bib{Alm62}{article}{
      author={Almgren, Frederick~Justin, Jr.},
       title={The homotopy groups of the integral cycle groups},
        date={1962},
        ISSN={0040-9383},
     journal={Topology},
      volume={1},
       pages={257\ndash 299},
         url={http://dx.doi.org/10.1016/0040-9383(62)90016-2},
      review={\MR{0146835}},
}

\bib{Alm65}{article}{
      author={Almgren, Frederick~Justin, Jr.},
       title={The theory of varifolds},
        date={1965},
     journal={Mimeographed notes},
}

\bib{AMN-rigidity}{article}{
      author={Ambrozio, Lucas},
      author={Marques, Fernando~C.},
      author={Neves, André},
       title={Rigidity theorems for the area widths of {R}iemannian manifolds},
        date={2024},
      eprint={2408.14375},
         url={https://arxiv.org/abs/2408.14375},
}

\bib{Bam-Klei-smale}{article}{
      author={Bamler, Richard~H.},
      author={Kleiner, Bruce},
       title={Ricci flow and contractibility of spaces of metrics},
        date={2019},
      eprint={1909.08710},
         url={https://arxiv.org/abs/1909.08710},
}

\bib{CS}{article}{
      author={Choi, Hyeong~In},
      author={Schoen, Richard},
       title={The space of minimal embeddings of a surface into a
  three-dimensional manifold of positive {R}icci curvature},
        date={1985},
        ISSN={0020-9910,1432-1297},
     journal={Invent. Math.},
      volume={81},
      number={3},
       pages={387\ndash 394},
         url={https://doi.org/10.1007/BF01388577},
      review={\MR{807063}},
}

\bib{chu-all-genus}{article}{
      author={Chu, Adrian Chun-Pong},
       title={Minimal surfaces with arbitrary genus in 3-spheres of positive
  ricci curvature},
        date={2025},
      eprint={2508.06019},
         url={https://arxiv.org/abs/2508.06019},
}

\bib{chu-Li-tori}{article}{
      author={Chu, Adrian Chun-Pong},
      author={Li, Yangyang},
       title={Existence of 5 minimal tori in 3-spheres of positive ricci
  curvature},
        date={2025},
      eprint={2409.09315},
         url={https://arxiv.org/abs/2409.09315},
}

\bib{Chu-Li-Wang-enumerative}{article}{
      author={Chu, Adrian Chun-Pong},
      author={Li, Yangyang},
      author={Wang, Zhihan},
       title={An enumerative min-max theorem for minimal surfaces},
        date={2026},
      eprint={2601.01736},
         url={https://arxiv.org/abs/2601.01736},
}

\bib{Chu-Li-Wang-genus}{article}{
      author={Chu, Adrian Chun-Pong},
      author={Li, Yangyang},
      author={Wang, Zhihan},
       title={Min-max theory and minimal surfaces with prescribed genus},
        date={2026},
      eprint={2507.23239},
         url={https://arxiv.org/abs/2507.23239},
}

\bib{Colding-DeLellis03}{incollection}{
      author={Colding, Tobias~H.},
      author={De~Lellis, Camillo},
       title={The min-max construction of minimal surfaces},
        date={2003},
   booktitle={Surveys in differential geometry, {V}ol. {VIII} ({B}oston, {MA},
  2002)},
      series={Surv. Differ. Geom.},
      volume={8},
   publisher={Int. Press, Somerville, MA},
       pages={75\ndash 107},
         url={https://doi.org/10.4310/SDG.2003.v8.n1.a3},
      review={\MR{2039986}},
}

\bib{Colding-Minicozzi08b}{article}{
      author={Colding, Tobias~H.},
      author={Minicozzi, William~P., II},
       title={Width and finite extinction time of {R}icci flow},
        date={2008},
        ISSN={1465-3060},
     journal={Geom. Topol.},
      volume={12},
      number={5},
       pages={2537\ndash 2586},
         url={https://doi.org/10.2140/gt.2008.12.2537},
      review={\MR{2460871}},
}

\bib{Deaibes2-foliations}{article}{
      author={Deaibes, Salim},
       title={Minimal $2$-spheres and optimal foliations in $3$-spheres with
  arbitrary metric},
        date={2021},
      eprint={2001.07695},
         url={https://arxiv.org/abs/2001.07695},
}

\bib{Fed}{book}{
      author={Federer, Herbert},
       title={Geometric measure theory},
      series={Die Grundlehren der mathematischen Wissenschaften, Band 153},
   publisher={Springer-Verlag New York Inc., New York},
        date={1969},
      review={\MR{0257325}},
}

\bib{BP-nonplanar}{article}{
      author={Ghini~Bettiol, Renato},
      author={Piccione, Paolo},
       title={Nonplanar minimal spheres in ellipsoids of revolution},
        date={2024-05},
        ISSN={0391-173X},
     journal={Ann. Sc. Norm. Super. Pisa Cl. Sci},
       pages={32},
         url={http://dx.doi.org/10.2422/2036-2145.202309_018},
}

\bib{Giusti-bdd-variation}{book}{
      author={Giusti, Enrico},
       title={Minimal surfaces and functions of bounded variation},
      series={Monographs in Mathematics},
   publisher={Birkh\"auser Verlag, Basel},
        date={1984},
      volume={80},
        ISBN={0-8176-3153-4},
         url={https://doi.org/10.1007/978-1-4684-9486-0},
      review={\MR{775682}},
}

\bib{Gro88}{incollection}{
      author={Gromov, Mikhael},
       title={Dimension, nonlinear spectra and width},
        date={1988},
   booktitle={Geometric aspects of functional analysis (1986/87)},
      series={Lecture Notes in Math.},
      volume={1317},
   publisher={Springer, Berlin},
       pages={132\ndash 184},
         url={https://doi.org/10.1007/BFb0081739},
      review={\MR{950979}},
}

\bib{HK19}{article}{
      author={Haslhofer, Robert},
      author={Ketover, Daniel},
       title={Minimal 2-spheres in 3-spheres},
        date={2019},
        ISSN={0012-7094},
     journal={Duke Math. J.},
      volume={168},
      number={10},
       pages={1929\ndash 1975},
         url={https://doi.org/10.1215/00127094-2019-0009},
      review={\MR{3983295}},
}

\bib{Hat83}{article}{
      author={Hatcher, Allen~E.},
       title={A proof of the {S}male conjecture, {${\rm Diff}(S^{3})\simeq {\rm
  O}(4)$}},
        date={1983},
        ISSN={0003-486X},
     journal={Ann. of Math. (2)},
      volume={117},
      number={3},
       pages={553\ndash 607},
         url={https://doi.org/10.2307/2007035},
      review={\MR{701256}},
}

\bib{Hat-AT-book}{book}{
      author={Hatcher, Allen~E.},
       title={Algebraic topology},
   publisher={Cambridge University Press, Cambridge},
        date={2002},
        ISBN={0-521-79160-X; 0-521-79540-0},
      review={\MR{1867354}},
}

\bib{IMN17}{article}{
      author={Irie, Kei},
      author={Marques, Fernando~C.},
      author={Neves, Andr\'e},
       title={Density of minimal hypersurfaces for generic metrics},
        date={2018},
        ISSN={0003-486X},
     journal={Ann. of Math. (2)},
      volume={187},
      number={3},
       pages={963\ndash 972},
         url={https://doi.org/10.4007/annals.2018.187.3.8},
      review={\MR{3779962}},
}

\bib{Liokomovich-Ketover23}{article}{
      author={Ketover, Daniel},
      author={Liokumovich, Yevgeny},
       title={The {S}male conjecture and min-max theory},
        date={2025},
        ISSN={0020-9910,1432-1297},
     journal={Invent. Math.},
      volume={241},
      number={1},
       pages={1\ndash 34},
         url={https://doi.org/10.1007/s00222-025-01334-z},
      review={\MR{4921571}},
}

\bib{KMN16}{article}{
      author={Ketover, Daniel},
      author={Marques, Fernando~C.},
      author={Neves, Andr\'e},
       title={The catenoid estimate and its geometric applications},
        date={2020},
        ISSN={0022-040X},
     journal={J. Differential Geom.},
      volume={115},
      number={1},
       pages={1\ndash 26},
         url={https://doi.org/10.4310/jdg/1586224840},
      review={\MR{4081930}},
}

\bib{Li-W-X-lens}{article}{
      author={Li, Xingzhe},
      author={Wang, Tongrui},
      author={Yao, Xuan},
       title={Minimal surfaces with low genus in lens spaces},
        date={2025},
        ISSN={0075-4102,1435-5345},
     journal={J. Reine Angew. Math.},
      volume={828},
       pages={175\ndash 218},
         url={https://doi.org/10.1515/crelle-2025-0061},
      review={\MR{4979239}},
}

\bib{li-Wang-tori}{article}{
      author={Li, Xingzhe},
      author={Wang, Zhichao},
       title={Existence of embedded minimal tori in three-spheres with positive
  {R}icci curvature},
        date={2024},
      eprint={2409.10391},
         url={https://arxiv.org/abs/2409.10391},
}

\bib{LMN16}{article}{
      author={Liokumovich, Yevgeny},
      author={Marques, Fernando~C.},
      author={Neves, Andr\'e},
       title={Weyl law for the volume spectrum},
        date={2018},
        ISSN={0003-486X},
     journal={Ann. of Math. (2)},
      volume={187},
      number={3},
       pages={933\ndash 961},
         url={https://doi.org/10.4007/annals.2018.187.3.7},
      review={\MR{3779961}},
}

\bib{MN14}{article}{
      author={Marques, {Fernando C.}},
      author={Neves, Andr\'e},
       title={Min-max theory and the {W}illmore conjecture},
        date={2014},
        ISSN={0003-486X},
     journal={Ann. of Math. (2)},
      volume={179},
      number={2},
       pages={683\ndash 782},
         url={http://dx.doi.org/10.4007/annals.2014.179.2.6},
      review={\MR{3152944}},
}

\bib{MN17}{article}{
      author={Marques, {Fernando C.}},
      author={Neves, Andr\'e},
       title={Existence of infinitely many minimal hypersurfaces in positive
  {R}icci curvature},
        date={2017},
        ISSN={0020-9910},
     journal={Invent. Math.},
      volume={209},
      number={2},
       pages={577\ndash 616},
         url={http://dx.doi.org/10.1007/s00222-017-0716-6},
      review={\MR{3674223}},
}

\bib{MN18}{article}{
      author={Marques, {Fernando C.}},
      author={Neves, Andr\'e},
       title={Morse index of multiplicity one min-max minimal hypersurfaces},
        date={2021},
        ISSN={0001-8708},
     journal={Adv. Math.},
      volume={378},
       pages={Paper No. 107527, 58},
         url={https://doi.org/10.1016/j.aim.2020.107527},
      review={\MR{4191255}},
}

\bib{MNS17}{article}{
      author={Marques, {Fernando C.}},
      author={Neves, Andr\'e},
      author={Song, Antoine},
       title={Equidistribution of minimal hypersurfaces for generic metrics},
        date={2019},
        ISSN={0020-9910},
     journal={Invent. Math.},
      volume={216},
      number={2},
       pages={421\ndash 443},
         url={https://doi.org/10.1007/s00222-018-00850-5},
      review={\MR{3953507}},
}

\bib{Pi}{book}{
      author={Pitts, Jon~T.},
       title={Existence and regularity of minimal surfaces on {R}iemannian
  manifolds},
      series={Mathematical Notes},
   publisher={Princeton University Press, Princeton, N.J.; University of Tokyo
  Press, Tokyo},
        date={1981},
      volume={27},
        ISBN={0-691-08290-1},
      review={\MR{626027}},
}

\bib{Ritore2023}{book}{
      author={Ritor{\'e}, Manuel},
       title={Isoperimetric inequalities in riemannian manifolds},
      series={Progress in Mathematics},
   publisher={Birkh{\"a}user/Springer},
     address={Cham},
        date={2023},
      volume={348},
        ISBN={978-3-031-37900-0},
      review={\MR{4676392}},
}

\bib{Sacks-Uhlenbeck81}{article}{
      author={Sacks, Jonathan},
      author={Uhlenbeck, Karen~K.},
       title={The existence of minimal immersions of {$2$}-spheres},
        date={1981},
        ISSN={0003-486X},
     journal={Ann. of Math. (2)},
      volume={113},
      number={1},
       pages={1\ndash 24},
         url={https://doi.org/10.2307/1971131},
      review={\MR{604040}},
}

\bib{SS23}{article}{
      author={Sarnataro, Lorenzo},
      author={Stryker, Douglas},
       title={Optimal regularity for minimizers of the prescribed mean
  curvature functional over isotopies},
        date={2025},
        ISSN={2168-0930,2168-0949},
     journal={Camb. J. Math.},
      volume={13},
      number={3},
       pages={609\ndash 706},
         url={https://doi.org/10.4310/cjm.250715011408},
      review={\MR{4933237}},
}

\bib{SS}{article}{
      author={Schoen, Richard},
      author={Simon, Leon},
       title={Regularity of stable minimal hypersurfaces},
        date={1981},
        ISSN={0010-3640},
     journal={Comm. Pure Appl. Math.},
      volume={34},
      number={6},
       pages={741\ndash 797},
         url={http://dx.doi.org/10.1002/cpa.3160340603},
      review={\MR{634285 (82k:49054)}},
}

\bib{Si}{book}{
      author={Simon, Leon},
       title={Lectures on geometric measure theory},
      series={Proceedings of the Centre for Mathematical Analysis, Australian
  National University},
   publisher={Australian National University, Centre for Mathematical Analysis,
  Canberra},
        date={1983},
      volume={3},
        ISBN={0-86784-429-9},
      review={\MR{756417 (87a:49001)}},
}

\bib{Smith82}{thesis}{
      author={Smith, Francis~R.},
       title={On the existence of embedded minimal 2-spheres in the 3-sphere,
  endowed with an arbitrary {R}iemannian metric},
        type={Ph.D. Thesis},
        date={1982},
}

\bib{Song18}{article}{
      author={Song, Antoine},
       title={Existence of infinitely many minimal hypersurfaces in closed
  manifolds},
        date={2023},
        ISSN={0003-486X},
     journal={Ann. of Math. (2)},
      volume={197},
      number={3},
       pages={859\ndash 895},
         url={https://doi.org/10.4007/annals.2023.197.3.1},
      review={\MR{4564260}},
}

\bib{WWZ-S4}{article}{
      author={Wang, Tongrui},
      author={Wang, Zhichao},
      author={Zhou, Xin},
       title={Equivariant min-max theory and the spherical bernstein problem in
  $\mathbb{S}^4$},
        date={2026},
      eprint={2602.03984},
         url={https://arxiv.org/abs/2602.03984},
}

\bib{Wang-Zhou-C11-2023}{article}{
      author={Wang, Zhichao},
      author={Zhou, Xin},
       title={Improved ${C}^{1,1}$ regularity for multiple membranes problem},
        date={2023},
      eprint={2308.00172},
}

\bib{wangzc2023existenceFour}{article}{
      author={Wang, Zhichao},
      author={Zhou, Xin},
       title={Existence of four minimal spheres in ${S}^3$ with a bumpy
  metric},
        date={2024},
      eprint={2305.08755},
         url={https://arxiv.org/abs/2305.08755},
}

\bib{Wang-Zhou22}{article}{
      author={Wang, Zhichao},
      author={Zhou, Xin},
       title={Min-max minimal hypersurfaces with higher multiplicity},
        date={2025},
        ISSN={0022-040X,1945-743X},
     journal={J. Differential Geom.},
      volume={129},
      number={1},
       pages={225\ndash 260},
         url={https://doi.org/10.4310/jdg/1736262245},
      review={\MR{4856595}},
}

\bib{Whi87}{article}{
      author={White, Brian},
       title={Curvature estimates and compactness theorems in {$3$}-manifolds
  for surfaces that are stationary for parametric elliptic functionals},
        date={1987},
        ISSN={0020-9910},
     journal={Invent. Math.},
      volume={88},
      number={2},
       pages={243\ndash 256},
         url={http://dx.doi.org/10.1007/BF01388908},
      review={\MR{880951}},
}

\bib{Whi91}{article}{
      author={White, Brian},
       title={The space of minimal submanifolds for varying {R}iemannian
  metrics},
        date={1991},
        ISSN={0022-2518},
     journal={Indiana Univ. Math. J.},
      volume={40},
      number={1},
       pages={161\ndash 200},
         url={http://dx.doi.org/10.1512/iumj.1991.40.40008},
      review={\MR{1101226 (92i:58028)}},
}

\bib{Yau82}{incollection}{
      author={Yau, Shing-Tung},
       title={Problem section},
        date={1982},
   booktitle={Seminar on {D}ifferential {G}eometry},
      series={Ann. of Math. Stud.},
      volume={102},
   publisher={Princeton Univ. Press, Princeton, N.J.},
       pages={669\ndash 706},
      review={\MR{645762}},
}

\bib{Zhou19}{article}{
      author={Zhou, Xin},
       title={On the {M}ultiplicity {O}ne {C}onjecture in min-max theory},
        date={2020},
        ISSN={0003-486X},
     journal={Ann. of Math. (2)},
      volume={192},
      number={3},
       pages={767\ndash 820},
         url={https://doi.org/10.4007/annals.2020.192.3.3},
      review={\MR{4172621}},
}

\end{biblist}
\end{bibdiv}
